\newcommand*{\relrelbarsep}{.386ex}
\newcommand*{\relrelbar}{%
  \mathrel{%
    \mathpalette\@relrelbar\relrelbarsep
  }%
}
\newcommand*{\@relrelbar}[2]{%
  \raise#2\hbox to 0pt{$\m@th#1\relbar$\hss}%
  \lower#2\hbox{$\m@th#1\relbar$}%
}
\providecommand*{\rightrightarrowsfill@}{%
  \arrowfill@\relrelbar\relrelbar\rightrightarrows
}
\providecommand*{\leftleftarrowsfill@}{%
  \arrowfill@\leftleftarrows\relrelbar\relrelbar
}
\providecommand*{\xrightrightarrows}[2][]{%
  \ext@arrow 0359\rightrightarrowsfill@{#1}{#2}%
}
\providecommand*{\xleftleftarrows}[2][]{%
  \ext@arrow 3095\leftleftarrowsfill@{#1}{#2}%
}
\newcommand{\stacksproj}[1]{{\cite[Tag~{#1}]{stacks-project}}}
\newcommand{\bG}{\mathbb{G}}
\newcommand{\bN}{\mathbb{N}}
\newcommand{\bQ}{\mathbb{Q}}
\newcommand{\bF}{\mathbb{F}}
\newcommand{\bZ}{\mathbb{Z}}
\newcommand{\bP}{\mathbb{P}}
\newcommand{\cA}{\mathcal{A}}
\newcommand{\cB}{\mathcal{B}}
\newcommand{\cC}{\mathcal{C}}
\newcommand{\cD}{\mathcal{D}}
\newcommand{\cF}{\mathcal{F}}
\newcommand{\cH}{\mathcal{H}}
\newcommand{\cI}{\mathcal{I}}
\newcommand{\cL}{\mathcal{L}}
\newcommand{\cM}{\mathcal{M}}
\newcommand{\cO}{\mathcal{O}}
\newcommand{\red}{\mathrm{red}}
\DeclareMathOperator{\cocone}{cocone}
\DeclareMathOperator{\cone}{cone}
\DeclareMathOperator{\hofib}{hofib}
\DeclareMathOperator{\Obj}{Obj}
\DeclareMathOperator{\Mor}{Mor}
\DeclareMathOperator{\Supp}{Supp}
\DeclareMathOperator{\Spec}{Spec}
\DeclareMathOperator{\Hom}{Hom}
\DeclareMathOperator{\Pic}{Pic}
\DeclareMathOperator{\id}{id}
\newcommand*{\coloneq}{\mathrel{\mathop:}=}
\newcommand*{\PicS}{\mathcal{P}\! \mathit{ic}}
\theoremstyle{plain}
\newtheorem{theorem}{Theorem}[section]
\newtheorem{proposition}[theorem]{Proposition}
\newtheorem{lemma}[theorem]{Lemma}
\newtheorem{corollary}[theorem]{Corollary}
\newtheorem{claim}[theorem]{Claim}
\theoremstyle{definition}
\newtheorem{definition}[theorem]{Definition}
\theoremstyle{remark}
\newtheorem{remark}[theorem]{Remark}
\title[Relative semiampleness in mixed characteristic]{Relative semiampleness in mixed characteristic}
\author{Jakub Witaszek} 
\address{Department of Mathematics \\  
University of Michigan\\  
Ann Arbor, MI 48109, USA}
\email{jakubw@umich.edu}
\begin{document}

\begin{abstract}
We show that a nef line bundle on a proper scheme over an excellent base is semiample if and only if it is semiample after restricting to characteristic zero and to positive characteristic. In the process of the proof, we provide a generalisation to mixed characteristic of the fact that the perfection of the Picard functor in positive characteristic is a stack in groupoids for the h-topology.
\end{abstract}

\subjclass[2010]{14C20, 14L30, 14G99, 14E30}
\keywords{semi-ample, mixed characteristic, universal homeomorphisms, quotients, pushouts}

\maketitle

\section{Introduction}

This article is an application of the results and techniques developed in \cite{witaszek2020keels}. We start with an overview from the birational geometric perspective. The readers interested more in the behaviour of line bundles on blow-up squares and the context of h-stacks are referred to Subsection \ref{ss:excision-intro}.

The geometry of an algebraic variety can often be captured by constructing maps to other varieties. In the category of projective varieties such maps correspond to line bundles which are \emph{base point free}, and so it is a fundamental problem to find ways of verifying whether a given line bundle admits such a property. In characteristic zero, this is usually achieved by employing vanishing theorems or analytic methods. In positive characteristic, vanishing theorems are false in general, but their use may often be replaced, sometimes yielding even stronger results, by application of the Frobenius morphism: sending every function to its $p$-th power. This idea underpins the famous Keel's theorem (\cite{keel99}), which in turn allowed Cascini and Tanaka to show that, up to a multiple, the base-point-freeness in positive characteristic can be verified fibrewise.
\begin{theorem}[{\cite[Theorem 1.1]{ct17}}] \label{thm:Cascini-Tanaka}
Let $\pi \colon X \to S$ be a projective morphism of excellent schemes of positive characteristic. Let $L$ be a line bundle on $X$ such that $L|_{X_s}$ is semiample for every point $s \in S$ and the fibre $X_s$ over it. Then $L$ is relatively semiample.
\end{theorem}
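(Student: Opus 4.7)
The plan is to prove the theorem by combining a relative form of Keel's base-point-free theorem with Noetherian induction on $S$. Since relative semiampleness is local on the base, I would first reduce to the case in which $S$ is affine Noetherian, and then, by passing to completions along points if necessary, to a local situation. The fibrewise hypothesis immediately forces $L$ to be relatively nef, as nefness can be verified fibrewise.

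The central tool is Keel's theorem \cite{keel99}: in positive characteristic, a nef line bundle $L$ on a proper scheme $X$ is semiample if and only if $L|_{E(L)}$ is semiample, where $E(L)$ is the union of integral closed subschemes $V \subseteq X$ with $(L|_V)^{\dim V} = 0$. The first substantial step is to formulate and prove a relative version of this statement, namely that $L$ is $\pi$-semiample if and only if the restriction of $L$ to the relative exceptional locus $E_\pi(L) \subseteq X$ is $\pi$-semiample. Granting this, and noting that the fibres of $E_\pi(L) \to S$ are contained in the absolute exceptional loci of the fibres, one reduces to the case in which $L$ is $\pi$-numerically trivial; the fibrewise hypothesis then forces each $L|_{X_s}$ to be torsion in $\Pic(X_s)$.

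The remaining task, which I expect to be the main obstacle, is to bound these torsion orders uniformly in $s \in S$. For this I would study the relative Picard scheme $\Pic_{X/S}^\tau$ together with the Frobenius morphism: replacing $L$ by a suitable power $L^{\otimes p^n}$ trivialises purely inseparable contributions, and one may then appeal to the quasi-finiteness (after perfection) of the torsion part of $\Pic_{X/S}^\tau$ to obtain a uniform torsion order over an open subset of $S$. A Noetherian induction on closed subschemes of $S$, combined with constructibility of the fibrewise torsion-order function and the fact that semiampleness glues along a stratification thanks to the relative Keel's theorem, should then complete the argument. The subtle point throughout is the frequently non-reduced behaviour of $\Pic_{X/S}^\tau$ in positive characteristic, which is precisely what the Frobenius input is designed to overcome.
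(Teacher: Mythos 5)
Your overall architecture (a relative Keel theorem plus a separate treatment of the relatively numerically trivial case) points in the right direction, but the pivotal reduction is wrong. Keel's theorem reduces semiampleness of a nef $L$ to semiampleness of $L|_{\mathbb{E}(L)}$, where $\mathbb{E}(L)$ is the union of the integral closed subschemes $V \subseteq X$ on which $L$ is \emph{not} relatively big. Restricting to $\mathbb{E}(L)$ does not make $L$ relatively numerically trivial; it only drops the dimension, and it drops the dimension only when $L$ is relatively big, for otherwise $\mathbb{E}(L) = X$ and Keel's theorem says nothing. So your claim that ``one reduces to the case in which $L$ is $\pi$-numerically trivial'' does not follow. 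In the actual argument (both in Cascini--Tanaka and in the mixed-characteristic version proved here as Theorem \ref{thm:main-intro}), the non-big case is handled differently: one takes the semiample fibration of $L|_{X_\eta}$ over the generic point $\eta \in S$, compactifies its target to some $Z \to S$ with $\dim Z < \dim X$, resolves the indeterminacies of $X \dashrightarrow Z$, and thereby reduces to a line bundle that is $\bQ$-trivial on the generic fibre over the \emph{new} base $Z$; that reduction in turn needs $X$ normal, flattening, regular alterations, and the equidimensional descent of Lemma \ref{lem:descend-equidimensional-base-qfactorial} (see Proposition \ref{prop:normal-case}). The non-normal case is then a genuinely separate third step which your outline omits entirely: one normalizes, builds the fibration upstairs, and descends it through the normalization by forming the quotient by a finite set-theoretic equivalence relation via Koll\'ar's theorem (cf.\ Theorem \ref{thm:quotients}).

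Your plan for the numerically trivial case is also too optimistic. The morphism $\pi$ is not flat in general, so $\Pic^{\tau}_{X/S}$ is not available as a reasonably behaved object, and ``constructibility of the fibrewise torsion-order function'' is precisely the hard point rather than a usable tool. The known proofs instead flatten $\pi$ by a blow-up of the base, descend the line bundle over the flat locus, and then glue along the blow-up square; making that gluing work is the content of the h-descent statement for the perfected Picard groupoid (Bhatt--Scholze), or of the intricate gluing over partial normalisations in Cascini--Tanaka, and it is what Section 3 of the present paper supplies in mixed characteristic. Frobenius enters through the equivalence $\PicS(X)[1/p] \simeq \PicS(Y)[1/p]$ for universal homeomorphisms, not through quasi-finiteness of a torsion Picard scheme.
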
  
\noindent Here, a line bundle is \emph{semiample} if and only if some multiple of it is base point free. Note that this result is false in characteristic zero. Theorem \ref{thm:Cascini-Tanaka} shows that a line bundle which is trivial on each fibre, descends to the base up to some multiple as long as $\pi$ has geometrically connected fibres; this special case of the theorem was proven originally in \cite{BS17} by Bhatt and Scholze using different methods (see also \cite[Tag 0EXA]{stacks-project}). In fact, they showed that all vector bundles which are trivial on each fibre of $f$ descend to the base after perfection.

Not only does Theorem \ref{thm:Cascini-Tanaka} provide a structurally important uniform description of the behaviour of semiampleness in families, but it is also essential to the future development of the positive characteristic Minimal Model Program. Note that a three-dimensional special case thereof was proven in \cite{bw14} and used to construct Mori fibre spaces, while \cite{HaconWitaszekMMP4fold} employed Theorem \ref{thm:Cascini-Tanaka} to show the validity of the Minimal Model Program for four-dimensional varieties over a curve contingent upon the existence of resolutions of singularities.      

Therefore, it is natural to wonder if an analogue of Theorem \ref{thm:Cascini-Tanaka} holds in mixed characteristic, that is for schemes which are \emph{not} defined over a field. Mixed characteristic schemes bridge the gap between positive and zero characteristics and come naturally in the context of number theory. The study of their geometry and commutative algebra gathered much interest in recent year (cf.\ \cites{andre18,bhatt16,ms18,maschwede18,tanaka16_excellent,EH16,BMPSTWW20,TakamatsuYoshikawaMMP}). Our current project grew as an application  of \cite{witaszek2020keels} in which Keel's theorem on semiampleness and Koll\'ar's theorem on quotients by finite equivalence relations were extended from positive to mixed characteristic (see Theorem \ref{theorem:mixed-char-Keel} and Theorem \ref{thm:quotients}; see also \cite{stigant2021augmented} for some additional results including applications to the augmented base locus).

One of the key goals of this article is to generalise Theorem \ref{thm:Cascini-Tanaka} to mixed characteristic which was mentioned as forthcoming in \cite[Theorem 1.8]{witaszek2020keels}. In particular, we get that a line bundle $L$ on a proper scheme $X$ over a mixed characterististic divisorial valuation ring $R$ is semiample if and only if $L|_{X_{\eta}}$ and $L|_{X_s}$ are semiample where $s$ and $\eta$ and the special point and the generic point of $R$, respectively.
\begin{theorem} \label{thm:main-intro} Let $X$ be a scheme admitting a proper morphism $\pi \colon X \to S$ to an excellent scheme $S$ and let $L$ be a line bundle on $X$. Then $L$ is semiample if and only if $L|_{X_{\bQ}}$ is semiample and $L|_{X_s}$ is semiample for every point $s \in S$ having positive characteristic residue field.
\end{theorem}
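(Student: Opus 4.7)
The ``only if'' direction is immediate since semiampleness restricts to semiampleness. For the converse, by standard spreading-out and noetherian approximation we reduce to the case $S = \Spec A$ noetherian affine. Write $S_\bQ = \Spec(A \otimes_\bZ \bQ) \subset S$ for the characteristic zero open subscheme, and for each rational prime $p$ let $V(p) = \Spec(A/pA) \subset S$. Set $X_\bQ = X \times_S S_\bQ$ and $X_{V(p)} = X \times_S V(p)$; every point of $S$ lies in $S_\bQ$ or in some $V(p)$.

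Each restricted morphism $\pi|_{X_{V(p)}}\colon X_{V(p)} \to V(p)$ is proper between excellent schemes of characteristic $p$, and by hypothesis $L|_{X_s}$ is semiample on every fibre. Theorem \ref{thm:Cascini-Tanaka} (after a Chow lemma reduction if needed to make $\pi$ projective) then gives that $L|_{X_{V(p)}}$ is relatively semiample, and since $V(p)$ is affine this upgrades to absolute semiampleness. Combined with semiampleness of $L|_{X_\bQ}$, we obtain base-point-free morphisms $\phi_\bQ\colon X_\bQ \to Y_\bQ$ and $\phi_p\colon X_{V(p)} \to Y_p$ (defined by sufficiently divisible powers of $L$ that may depend on the region) contracting exactly the curves on which $L$ is trivial.

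To finish, the plan is to assemble these partial morphisms into a single proper $\phi\colon X \to Y$ over $S$ so that a power of $L$ descends to an ample line bundle $H$ on $Y$; semiampleness of $L$ then follows from ampleness of $H$. I would define the equivalence relation $R \subset X \times_S X$ generated by identifying two points whenever they lie in a common fibre of $\phi_\bQ$ or of some $\phi_p$, and construct $Y = X/R$ by the mixed-characteristic extension of Koll\'ar's quotient theorem from \cite{witaszek2020keels} (Theorem \ref{thm:quotients}). The outputs of the char-zero and char-$p$ fibrations are glued into $Y$, and a suitable multiple of $L$ descends to an ample $H$ on $Y$, giving the required semiampleness.

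The principal obstacle is precisely this last gluing-and-descent step. One must verify that $R$ is a finite equivalence relation satisfying the hypotheses of the mixed-characteristic quotient theorem across possibly infinitely many residue characteristics simultaneously (as already happens for $S = \Spec \bZ$), and then descend the line bundle through $\phi$ across the boundary between the characteristic zero and positive characteristic loci. This last descent is where the h-stack property of the (perfected) Picard functor in mixed characteristic, highlighted in the abstract and developed as the auxiliary main result of the paper, is expected to enter in an essential way.
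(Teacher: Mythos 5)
There is a genuine gap, and it sits exactly where you flag it: the ``gluing-and-descent step'' is not a technical afterthought but the entire content of the theorem, and the mechanism you propose for it cannot work as stated. First, $S_{\bQ}$ and the loci $V(p)$ do not form an open cover of $S$ (each $V(p)$ is closed), so there is no descent formalism in which the partial fibrations $\phi_{\bQ}$ and $\phi_p$ automatically glue; the whole difficulty lives at mixed characteristic points such as the closed point of $\Spec \bZ_p$, which lies in the closure of the characteristic-zero locus. Second, the equivalence relation $R \subseteq X \times_S X$ you define -- identifying points in a common fibre of $\phi_{\bQ}$ or some $\phi_p$ -- has positive-dimensional fibres over $X$ whenever the fibrations contract anything, so it is not a \emph{finite} set-theoretic equivalence relation, and Theorem \ref{thm:quotients} does not apply to it. The paper uses the quotient theorem quite differently: only in the non-normal step, to quotient the \emph{target} $Z$ of the semiample fibration of the normalisation $Y \to X$ by the finite relation $E_Z$ obtained by pushing forward $(Y \times_X Y)_{\red}$, and even there one must first prove the closure of $E_Z$ is a finite equivalence relation (Lemma \ref{lem:closures-of-subsets-of-finite-equivalence-relations}).

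More structurally, the paper's proof is an induction on $\dim X$ in three stages, none of which appears in your outline. The relatively numerically trivial case (Propositions \ref{prop:PicS-ff}--\ref{prop:relatively-trivial-case-algebraic-spaces}) is where the h-stack property of $\underline{\PicS}_M \otimes \bQ$ (Theorem \ref{thm:fibre-of-Pic-is-h-sheaf}) actually enters: one descends a \emph{power of the line bundle itself} to the base via flattening, blow-ups and Lemmas \ref{lem:essential-image-h-topology} and \ref{lem:essential-image-h-topology2}, rather than gluing morphisms. The normal case (Proposition \ref{prop:normal-case}) splits into the big case, handled by the mixed-characteristic Keel theorem \ref{theorem:mixed-char-Keel} applied to $\mathbb{E}(L)$ and the inductive hypothesis, and the non-big case, reduced via alterations and flattening to Lemma \ref{lem:descend-equidimensional-base-qfactorial}. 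Your proposal invokes neither Keel's theorem nor any induction on dimension, and the appeal to Theorem \ref{thm:Cascini-Tanaka} on each $V(p)$ only reproduces the hypothesis fibrewise rather than advancing toward semiampleness over $S$. As written, the argument stops precisely at the point where the proof has to begin.
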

\noindent Here, $X_{\bQ} = X \times_{\Spec \bZ} \Spec \bQ$. The assumption that $L|_{X_{\bQ}}$ is semiample is necessary due to the fact that Theorem \ref{thm:Cascini-Tanaka} is false in characteristic zero. In an update to \cite{HaconWitaszekMMP4fold}, this theorem  will be used in the proof of the validity of the four-dimensional semistable Minimal Model Program in mixed characteristic, which will, in turn, provide applications to litability of positive characteristic three-dimensional varieties.\\

The proof of Theorem \ref{thm:Cascini-Tanaka} by Cascini and Tanaka inducts on dimension and consists of three steps: first showing the result for when $L$ is relatively numerically trivial, second for when $X$ is normal, and then finally in full generality. The first step is achieved by a meticulous intricate gluing of semiampleness on partial normalisations, the second step by Keel's theorem, flattening, and the existence of regular alterations, while in the third step the semiample fibration $X \to Z$ is constructed by quotienting the fibration $\tilde X \to \tilde Z$ for the normalisation $\rho \colon \tilde X \to X$ coming from the second step by the finite equivalence relation defining $\rho$ (which is possible by the aforementioned result of Koll\'ar from \cite{kollar12}).

For the second and the third step, we follow the strategy of Cascini and Tanaka, by replacing Keel's theorem on semiampleness of line bundles and Koll\'ar's theorem on the existence of quotients by finite equivalence relations by their mixed characteristic variants obtained in \cite{witaszek2020keels}. The case of Step 1 is more intricate as it is already technically involved in positive characteristic and any approach based on directly replacing the use of Frobenius in the proof of Cascini and Tanaka by the techniques developed in \cite{witaszek2020keels}, albeit likely possible, would add another layer of difficulty.\\   

\subsection{Excision for line bundles}\label{ss:excision-intro}

Our proof of the first step is based instead on the ideas of Bhatt and Scholze \cite{BS17} and that of \cite[Tag 0EXA]{stacks-project} (wherein a more elementary explanation is given). This provides a new insight into the behaviour of line bundles in mixed characteristic, which we believe is interesting by itself. 

Let us observe that in order to study line bundles under geometric or scheme-theoretic operations it is often more beneficial to look at the \emph{Picard groupoid} (in which we keep track of the isomorphisms of line bundles) instead of the \emph{Picard group} (in which isomorphic line bundles are identified).

With that in mind, we can state the result of Bhatt and Scholze in the case of line bundles. They showed that given a projective morphism $g \colon Y \to X$ in characteristic $p>0$ which is an isomorphism ouside of a closed subset $Z \subseteq X$, the following diagram
\begin{equation} \label{diagram:BS-picard}
		\begin{tikzcd} 
			\PicS(X)[1/p] \arrow{r} \arrow{d} & \PicS(Y)[1/p] \arrow{d} \\
			\PicS(Z)[1/p] \arrow{r} & \PicS(E)[1/p]
		\end{tikzcd}
		\end{equation}
is Cartesian, where $\PicS(X)$ denotes the groupoid of line bundles on $X$ and $E = g^{-1}(Z)$. As groupoids constitute a \emph{$2$-category} the notion of Cartesianity here is stronger than that for ordinary categories. Explicitly, the above result says that up to raising line bundles to some big enough power of $p$, a data of a line bundle $L_Y$ on $Y$, a line bundle $L_Z$ on $Z$, and an isomorphism $L_Y|_E \simeq (g|_E)^*L_Z$, gives a unique (up to a unique isomorphism) line bundle $L$ on $X$. This result if \emph{false} for the standard Picard group: the line bundle $L$ depends not only on the line bundles $L_Y$ and $L_Z$ which are isomorphic on $E$, but also on the choice of the isomorphism.

Therefrom, Bhatt and Scholze derived that the association $F \colon X \mapsto \PicS(X)[\frac{1}{p}]$ is a stack in groupoids for the h-topology. This implies that given a proper surjective map $g \colon Y \to X$, a line bundle $L_Y$ on $Y$, and an isomorphism between the pullbacks of $L_Y$ to $Y\times_X Y$ under the two projections such that the cocycle condition holds on $Y \times_X Y \times_X Y$, we get a unique (up to a unique isomorphism) line bundle $L$ on $X$. The notion of h-topology developed by Voyevodsky allows here for a concise treament of the behaviour of line bundles under proper surjective maps. Geometrically, this result, by ways of applying flattening and induction, makes it possible to reduce Theorem \ref{thm:Cascini-Tanaka} in the numerically trivial case to when $f$ is flat, in which case it is easier to descend the line bundle (see \cite[Tag 0EXG]{stacks-project}). \\

We cannot expect the same result to be true in mixed characteristic, as it is false in characteristic zero. The path forward is suggested by the case of thickenings $f \colon Y \to X$ of Noetherian schemes (or, more generally, finite universal homeomorphisms) from \cite{witaszek2020keels}. In characteristic $p>0$, we have that $\PicS(X)[1/p] \simeq \PicS(Y)[1/p]$, because $f$ factors through a power of Frobenius. In mixed characteristic, modulo some small adjustments, \cite[Theorem 1.7]{witaszek2020keels} states that
\begin{equation} \label{diagram:Witaszek-thickening}
\begin{tikzcd}
\PicS(X) \otimes \bQ \arrow{r}{f^*} \arrow{d} & \PicS(Y)\otimes \bQ \arrow{d} \\
\PicS(X_{\bQ}) \otimes \bQ \arrow{r} & \PicS(Y_{\bQ})\otimes \bQ,
\end{tikzcd}
\end{equation}
is Cartesian. This suggests that one could extend the condition in (\ref{diagram:BS-picard}) to a Cartesianity of a 3D diagram obtained by putting together the diagram itself and its copy for the base change of the schemes to $\bQ$. Although technically possible, dealing with such constructs would be quite taxing.

Instead, we pursue the idea to consider the \emph{homotopy fibre} as inspired through the collaboration \cite{aemw}. Pick a line bundle $M$ on $X_{\bQ}$ and set $\underline{\PicS}_M(X) := \hofib(\PicS(X) \to \PicS(X_{\bQ}))$ where the homotopy fibre is taken over $M$. Explicitly, $\underline{\PicS}_M(X)$ is a groupoid with objects being pairs $(L,\phi)$, where $L$ is a line bundle on $X$ and $\phi \colon L|_{X_{\bQ}} \xrightarrow{\simeq} M$ is an isomorphism. One can verify that (\ref{diagram:Witaszek-thickening}) is Cartesian if and only if $\underline{\PicS}_M(X) \otimes \bQ \simeq \underline{\PicS}_{f^*M}(Y) \otimes \bQ$ for every line bundle $M$ on $X_{\bQ}$ (see Lemma \ref{lem:cartiesianity-checked-by-fibres}).

This suggests that (\ref{diagram:BS-picard}) should be Cartesian with $\PicS$ replaced by $\underline{\PicS}$. This is indeed the case, as shown by Proposition \ref{prop:fibre-of-Pic-blow-up-squares}, and in fact $\underline{\PicS}$ is a stack in groupoids for the $h$-topology.


\begin{theorem} \label{thm:fibre-of-Pic-is-h-sheaf}
Let $S$ be a Noetherian scheme and $Sch/S$ be the category of schemes of finite type over $S$. Fix a line bundle $M$ on $S_{\bQ}$. 
Let $\underline{\PicS}_M \colon Sch/S \to Groupoids$ be a pseudofunctor sending $X \in Sch/S$ to the homotopy fibre, over the pullback of $M$ to $X_{\bQ}$, of the restriction morphism 
\[
\PicS(X) \to \PicS(X_{\bQ}).
\]
Then $\underline{\PicS}_M \otimes \bQ$ is a stack in groupoids for the h-topology.
\end{theorem}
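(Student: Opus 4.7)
The plan is to reduce h-descent for $\underline{\PicS}_M \otimes \bQ$ to three statements: (a) it is an fppf stack in groupoids; (b) it satisfies descent along abstract blow-up squares, i.e.\ along proper morphisms $g \colon Y \to X$ which are isomorphisms outside a closed subset $Z \subseteq X$; and (c) it satisfies descent along finite universal homeomorphisms. Item (b) is supplied by Proposition \ref{prop:fibre-of-Pic-blow-up-squares}, while (c), via Lemma \ref{lem:cartiesianity-checked-by-fibres}, is a reformulation of the Cartesianity of (\ref{diagram:Witaszek-thickening}), which in turn rests on \cite[Theorem 1.7]{witaszek2020keels}. Combining (a)-(c) then gives the h-stack property by the general strategy of Bhatt-Scholze (see \cite{BS17} and \cite[Tag 0EXA]{stacks-project}), which rests on Voevodsky's description of the h-topology as being generated, up to fppf refinement, by abstract blow-ups and finite universal homeomorphisms.

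For (a), since $\PicS$ itself is an fppf stack and homotopy fibres commute with 2-limits of groupoids, $\underline{\PicS}_M$ inherits the fppf stack property; the operation $\otimes \bQ$ is a filtered 2-colimit which commutes with the finite 2-limits appearing in descent along quasi-compact covers, so $\underline{\PicS}_M \otimes \bQ$ is an fppf stack. For (b), Proposition \ref{prop:fibre-of-Pic-blow-up-squares} immediately yields that for each abstract blow-up square as above, with $E = g^{-1}(Z)$, the diagram
\[
\begin{tikzcd}
\underline{\PicS}_{M}(X) \otimes \bQ \arrow{r} \arrow{d} & \underline{\PicS}_{g^*M}(Y) \otimes \bQ \arrow{d} \\
\underline{\PicS}_{M|_Z}(Z) \otimes \bQ \arrow{r} & \underline{\PicS}_{(g|_E)^*M}(E) \otimes \bQ
\end{tikzcd}
\]
is 2-Cartesian. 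For (c), given a finite universal homeomorphism $f \colon Y \to X$, Lemma \ref{lem:cartiesianity-checked-by-fibres} converts the Cartesianity of (\ref{diagram:Witaszek-thickening}) into the equivalence of groupoids $f^* \colon \underline{\PicS}_M(X) \otimes \bQ \xrightarrow{\simeq} \underline{\PicS}_{f^*M}(Y) \otimes \bQ$, which is precisely the required descent along $f$.

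The main obstacle is to carry out the Bhatt-Scholze-type reduction rigorously in our 2-categorical, mixed characteristic, relative setting over a general Noetherian base $S$. One needs to argue that any h-cover $Y \to X$ in $Sch/S$ can, after fppf refinement, be reduced to a composition of an abstract blow-up and a finite universal homeomorphism; this geometric input rests on Raynaud-Gruson flattening by blow-ups together with the decomposition of a finite surjection into its generically étale part (absorbed by fppf descent) and a universal-homeomorphism part. One also needs throughout to keep track of the trivialisations $L|_{X_\bQ} \simeq M$, which is exactly what the homotopy fibre formulation is designed to facilitate, so that the gluing and cocycle data in each descent step can be chosen compatibly with the fixed rigidification on the characteristic zero locus.
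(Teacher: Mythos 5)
Your proposal has the same skeleton as the paper's proof, which consists of exactly two citations: the Bhatt--Scholze criterion (Theorem \ref{thm:BS-criterion-for-h-sheaves}) reduces the h-stack property to (a) the fppf stack property, supplied by Lemma \ref{lem:homotopy-fibre-of-t-stack-is-t-stack} together with the observation that $\otimes\,\bQ$ commutes with the finite limits appearing in the descent condition, and (b) Cartesianity on abstract blow-up squares, supplied by Proposition \ref{prop:fibre-of-Pic-blow-up-squares}. Two points where you diverge are worth flagging. First, your condition (c) --- descent along finite universal homeomorphisms --- is not a hypothesis of the criterion and is redundant: it is a \emph{consequence} of the h-stack property (see the remark following the definition of stacks for the t-topology), and indeed the paper deduces the universal-homeomorphism statement (Theorem \ref{thm:Pic-under-uh}) \emph{from} Theorem \ref{thm:fibre-of-Pic-is-h-sheaf}. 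The only input of this flavour that the argument genuinely needs is the thickening case, Corollary \ref{cor:our-functors-under-thickenings-twisted}, which is proven directly via the cocone computation. Importing \cite[Theorem 1.7]{witaszek2020keels} wholesale is not circular, since that result was established independently there (modulo the caveat that its hypotheses are only ``essentially equivalent''), but it forfeits the new proof of that theorem, which is one of the stated aims of this section. Second, the ``main obstacle'' you identify --- refining an arbitrary h-cover, up to fppf refinement, into abstract blow-ups --- is precisely the content of \cite[Theorem 2.9]{BS17}, already formulated for pseudo-functors to groupoids on $Sch/S$ over a Noetherian base; it does not need to be re-derived in this setting, and once you invoke it as a black box your items (a) and (b) complete the proof exactly as in the paper.
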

\noindent This theorem can be then used to deduce the relatively-numerically-trivial case of Theorem \ref{thm:main-intro}.

An analogous result for K-theory was shown in collaboration \cite{aemw} building on deep results in this field and some ideas of \cite{witaszek2020keels}. Via specialisation, \cite{aemw} should imply that Theorem \ref{thm:fibre-of-Pic-is-h-sheaf} holds for $M = \mathcal{O}_{S_{\bQ}}$.  

Although we treat mostly $2$-categorical phenomena, many ideas, by ways of \cite{BS17}, touch upon the higher categorical way of thinking. We hope that our work will help in promoting these concepts in birational geometry.




\section{Preliminaries}
We refer to \cite{stacks-project} for basic definitions in scheme theory and to \cite[Preliminaries]{witaszek2020keels} for a more thorough treatment of nefness, semiampleness, and EWMness, which is also summarised briefly below. We emphasise that we will \emph{not} deal with set theoretic issues; we implicitly work with sets of an appropriately bouded cardinality.

Recall that schemes of finite type over Noetherian base schemes are Noetherian (\stacksproj{01T6}). If a scheme is excellent, then its normalisation is finite (\stacksproj{0BB5}). Excellent schemes are automatically Noetherian. Given a scheme $X$ we write $X_{\bQ} \coloneq X \times_{\Spec \bZ} \Spec \bQ$ and $X_{\bF_p} \coloneq X \times_{\Spec \bZ} \Spec \bF_p$ where $p>0$ is a prime number. Given a morphism $\pi \colon X \to S$, we denote $\pi|_{X_{\bQ}}$ by $\pi_{\bQ}$.

We say that a morphism of schemes $f \colon X \to Y$ is a \emph{contraction} if it is proper, surjective, and $f_* \cO_X = \cO_Y$. Let $X$ be a proper scheme over a Noetherian base scheme $S$, let $\pi \colon X \to S$ be the projection, and let $L$ be a line bundle on $X$. All the notions below are relative to $S$. We say that $L$ is relatively \emph{nef} if $\mathrm{deg}(L|_C) \geq 0$ for every proper curve $C \subseteq X$ over $S$, 
it is relatively \emph{base point free} if the natural map $\pi^*\pi_*L \to L$ is surjective, it is relatively \emph{semiample} if some multiple of it is base point free, and it is relatively \emph{big} if $L|_{X_{\eta}}$ is big for some generic point $\eta \in f(X)$ and the fibre $X_{\eta}$ over $\eta$.

\begin{lemma}[{\cite[Lemma 2.1]{witaszek2020keels}}] \label{lem:pullback-of-big} Let $f \colon X \to Y$ be a finite surjective map of integral proper schemes over a Noetherian base scheme $S$. Let $L$ be a relatively nef line bundle on $Y$. Then $L$ is relatively big over $S$ if and only if $f^*L$ is relatively big over $S$.
\end{lemma}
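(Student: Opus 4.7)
The plan is to reduce to the case where $S$ is the spectrum of a field via restriction to the generic fibre over the common image of $X$ and $Y$ in $S$, and then to prove the absolute statement by a direct comparison of the global sections of powers of $L$ and $f^*L$.

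\textbf{Reduction to a field.} Since $f \colon X \to Y$ is surjective, the images in $S$ coincide: $\pi_X(X) = \pi_Y(Y) =: T$. Because $X$ is integral, $T$ is irreducible, with a unique generic point $\eta$. By the definition of relative bigness, the claim reduces to showing that $L|_{Y_\eta}$ is big iff $(f^*L)|_{X_\eta}$ is big. The base change $f_\eta \colon X_\eta \to Y_\eta$ is still finite and surjective, and both $X_\eta$ and $Y_\eta$ are integral: they are obtained from the integral Noetherian schemes $X$ and $Y$ by localizing at the multiplicative subset of nonzero pullbacks from $\cO_{T,\eta}$, an operation that preserves integrality thanks to the dominance of $X \to T$ and $Y \to T$. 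Moreover, $L|_{Y_\eta}$ remains nef over $\kappa(\eta)$.

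\textbf{Field case.} It suffices to show: for a finite surjective morphism $f \colon X \to Y$ of integral proper schemes over a field $k$ of common dimension $d$, with $L$ nef on $Y$, the line bundle $L$ is big iff $f^*L$ is big. Set $r := [K(X):K(Y)]$. The adjunction $\cO_Y \hookrightarrow f_*\cO_X$ is injective (because $f$ is surjective and $Y$ is reduced); tensoring with $L^n$ and using the projection formula yields the injection $H^0(Y, L^n) \hookrightarrow H^0(X, f^*L^n)$, so $L$ big implies $f^*L$ big. Conversely, $f_*\cO_X$ is a torsion-free coherent $\cO_Y$-module of generic rank $r$, so it admits an injection $\cO_Y^r \hookrightarrow f_*\cO_X$ with cokernel $Q$ supported in dimension at most $d-1$. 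Tensoring with $L^n$ and taking global sections gives
\[
h^0(X, f^*L^n) \;\leq\; r \cdot h^0(Y, L^n) + h^0(Y, Q \otimes L^n).
\]
Since $L$ is nef and $\dim \Supp(Q) \leq d-1$, a standard estimate gives $h^0(Y, Q \otimes L^n) = O(n^{d-1})$, whence bigness of $f^*L$ forces $h^0(Y, L^n) \gtrsim n^d$, i.e., $L$ is big.

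\textbf{Main obstacle.} The more delicate step is the reduction to the field case: the integrality of the fibres $X_\eta, Y_\eta$ and the restriction of nefness to them need to be checked carefully, although both follow from working at the generic point of the common image. The polynomial bound $h^0(Y, Q \otimes L^n) = O(n^{d-1})$ is classical for nef $L$ and a coherent sheaf $Q$ with support of dimension at most $d-1$; it reduces, via pullback of nefness along the closed immersion $\Supp(Q) \hookrightarrow Y$, to the asymptotic control of cohomology of nef line bundles on a lower-dimensional proper scheme. An alternative, intersection-theoretic route to the field case uses that for nef $L$ one has $L$ big $\iff L^d > 0$ together with $(f^*L)^d = r \cdot L^d$; but the cohomological comparison above is more elementary.
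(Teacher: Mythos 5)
The paper itself does not prove this lemma; it imports it verbatim from \cite[Lemma 2.1]{witaszek2020keels}, so there is no internal proof to compare against, and your attempt has to be judged on its own. Your reduction to the generic fibre of the common image is fine (it is essentially the paper's definition of relative bigness, plus the routine checks that $X_\eta$, $Y_\eta$ stay integral and that $f_\eta$ stays finite surjective), and the direction ``$L$ big $\Rightarrow$ $f^*L$ big'' via $\cO_Y \hookrightarrow f_*\cO_X$ and the projection formula is correct.

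The gap is in the converse. You assert that the torsion-free rank-$r$ sheaf $f_*\cO_X$ admits an injection $\cO_Y^{\oplus r} \hookrightarrow f_*\cO_X$ with cokernel supported in dimension $\leq d-1$. Such a map is the datum of $r$ global sections of $f_*\cO_X$, i.e.\ $r$ elements of $H^0(X,\cO_X)$, and for the map to be an isomorphism at the generic point these elements must be $K(Y)$-linearly independent in $K(X)$. But $H^0(X,\cO_X)$ is a finite field extension of $k$; if $X$ is geometrically integral it equals $k$, whose $K(Y)$-span in $K(X)$ is one-dimensional, so no such injection exists once $r\geq 2$. Concretely, for the squaring map $f\colon \bP^1 \to \bP^1$ one has $f_*\cO_{\bP^1}\cong \cO\oplus\cO(-1)$, and every map $\cO^{\oplus 2}\to \cO\oplus\cO(-1)$ has generic rank at most $1$. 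The estimate you want is nonetheless true, and the standard repair is to clear denominators: choose a $K(Y)$-basis $e_1,\dots,e_r$ of $K(X)$ consisting of sections of $f_*\cO_X$ over a dense open $U\subseteq Y$, and a coherent ideal $\cI\subseteq\cO_Y$ with $\cI|_U=\cO_U$ such that each $\cI\cdot e_i$ extends to a map $\cI\to f_*\cO_X$; then $\cI^{\oplus r}\to f_*\cO_X$ is injective with cokernel supported in dimension $\leq d-1$, and $h^0(Y,\cI\otimes L^n)\leq h^0(Y,L^n)$ yields your inequality. Alternatively --- and this is surely why the lemma carries the nefness hypothesis at all --- the intersection-theoretic route you relegate to a closing remark is the natural one: for nef $L$ on a $d$-dimensional integral proper scheme over a field, bigness is equivalent to $(L^d)>0$, and $((f^*L)^d)=r\,(L^d)$ by the projection formula for intersection numbers; this bypasses the sheaf-theoretic bookkeeping entirely.
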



We say that $L$ is relatively \emph{EWM} (\emph{endowed with a map}) if there exists a proper $S$-morphism $f \colon X \to Y$ to an algebraic space $Y$ proper over $S$ such that an integral closed subscheme $V \subseteq X$ is contracted (that is, $\dim V > \dim f(V)$) if and only if $L|_V$ is not relatively big. The Stein factorisation of $f$ is unique. The property of $L$ being EWM can be checked affine locally on $S$ (cf.\ \cite[Preliminaries]{witaszek2020keels}).

We remind the reader that flat proper morphisms of Noetherian schemes are equidimensional (\cite[Tag 0D4J]{stacks-project}) and recall a known fact about triangulated categories.
\begin{lemma} \label{lem:octahedral}
Consider the following commutative diagram
\begin{center}
\begin{tikzcd}
A \arrow{r} \arrow{d} & B \arrow{d} \\
C \arrow{r} & D  
\end{tikzcd}
\end{center}
in a triangulated category $\mathcal{T}$. Suppose that the map $\cone(A \to C) \to \cone(B \to D)$ is an isomorphism. Then there exists an exact triangle
\[
A \to B \oplus C \to D \xrightarrow{+1}.
\] 
\end{lemma}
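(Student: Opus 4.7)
The statement is a standard Mayer--Vietoris type lemma in triangulated categories. My plan is to build the candidate triangle explicitly and then show one of its maps is an isomorphism by iterated use of the octahedral axiom.

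Name the maps in the square $f \colon A \to B$, $g \colon A \to C$, $h \colon B \to D$, $k \colon C \to D$, so that $hf = kg$; let $K = \cone(g)$, $K' = \cone(h)$, and let $\alpha \colon K \to K'$ be the morphism induced by the square, which is the given isomorphism. Complete $\binom{f}{-g} \colon A \to B \oplus C$ to a distinguished triangle $A \to B \oplus C \to E \xrightarrow{+1}$. Since
\[
(h,k) \circ \binom{f}{-g} = hf - kg = 0,
\]
the map $(h,k) \colon B \oplus C \to D$ factors through $E$ by some morphism $\psi \colon E \to D$. It suffices to prove $\cone(\psi) = 0$.

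To compute $\cone((h,k))$, apply the octahedral axiom to the composition $B \hookrightarrow B \oplus C \xrightarrow{(h,k)} D$, which equals $h$. Using $\cone(B \hookrightarrow B \oplus C) = C$, this yields a triangle
\[
C \xrightarrow{\tau} K' \to \cone((h,k)) \xrightarrow{+1},
\]
where $\tau$ is the composition of $k$ with the canonical projection $D \to K'$. The identity $hf = kg$ gives $\tau \circ g = 0$, so $\tau$ factors through $C \to K$ via a morphism $K \to K'$ which is easily identified with $\alpha$. A second application of the octahedral axiom to $C \to K \xrightarrow{\alpha} K'$ then produces a triangle $\cone(C \to K) \to \cone(\tau) \to \cone(\alpha) \xrightarrow{+1}$; since $\alpha$ is an isomorphism and $\cone(C \to K) \simeq A[1]$, this identifies $\cone((h,k)) \simeq A[1]$.

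Finally, one further application of the octahedral axiom to $B \oplus C \to E \xrightarrow{\psi} D$ (composition $(h,k)$) gives a triangle $A[1] \to \cone((h,k)) \to \cone(\psi) \xrightarrow{+1}$. Combined with $\cone((h,k)) \simeq A[1]$ from the previous step, one checks that the resulting map $A[1] \to A[1]$ is the identity, whence $\cone(\psi) = 0$ and $\psi$ is an isomorphism, as desired. The main technical obstacle is the usual one in un-enhanced triangulated categories: cones are not functorial, so one must keep track of signs and verify that the composite isomorphism $A[1] \to \cone((h,k)) \to A[1]$ is indeed the identity (or at least an isomorphism). This is routine once the octahedra above are set up compatibly, but does require careful bookkeeping.
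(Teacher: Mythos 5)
Your argument contains a complete proof of the lemma, but it is buried in the middle and then obscured by an unnecessary final step. The core of what you do --- applying the octahedral axiom to $B \to B\oplus C \xrightarrow{(h,k)} D$ to get a triangle $C \xrightarrow{\tau} K' \to \cone((h,k)) \xrightarrow{+1}$, and then identifying $\cone(\tau) \simeq \cone\bigl(C \to \cone(g)\bigr) \simeq A[1]$ using that $\alpha$ is an isomorphism --- is essentially the paper's proof (the paper uses the hypothesis to write down the triangle $A \to C \to \cone(B\to D) \xrightarrow{+1}$ directly and then invokes one octahedron; you reach the same identification with a second octahedron). Once you know $\cone((h,k)) \simeq A[1]$, you are done: rotate the distinguished triangle $B \oplus C \xrightarrow{(h,k)} D \to \cone((h,k)) \xrightarrow{+1}$ to obtain $A \to B\oplus C \to D \xrightarrow{+1}$. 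Since the lemma only asserts the \emph{existence} of such a triangle (it does not prescribe the map $A \to B \oplus C$), nothing more is required, and this is all the paper's application needs (a long exact sequence in which the two right-hand maps are the natural ones).

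Your last paragraph, by contrast, aims at the strictly stronger claim that the specific comparison map $\psi \colon E \to D$ is an isomorphism, i.e.\ that the triangle can be taken with first map exactly $\binom{f}{-g}$. That is where the genuine difficulty sits: the map $A[1] \to \cone((h,k))$ produced by your third octahedron and the isomorphism $\cone((h,k)) \simeq A[1]$ produced by your second one arise from different, a priori incompatible choices of cones, and verifying that their composite is an isomorphism is exactly the non-functoriality problem you name; it is not resolved by ``careful bookkeeping'' alone in a general triangulated category without further argument. Fortunately this step is superfluous for the statement as given, so the fix is simply to delete it and conclude from $\cone((h,k)) \simeq A[1]$ as above.
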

\begin{proof}
By assumptions, we have the following exact triangle
\[
A \to C \to E \xrightarrow{+1},
\] 
where $E := \cone(B \to D)$.
The octahedral axiom applied to
\begin{center}
\begin{tikzcd}[column sep = small, row sep = small]
                            & &                            & & C \arrow{rrdd}            & &                & &  \\
                            & &                            & &                           & &                & &  \\
                            & & B \oplus C \arrow{rruu} \arrow{rrd} & &                           & & E \arrow{rrdd} & & \\
                            & &                            & & D \arrow{rru} \arrow{rrrrd} & &                & &  \\
B \arrow{rruu} \arrow{rrrru}& &                            & &                           & &                & & A[1]. 
\end{tikzcd}
\end{center}
yields the exact triangle
\[
A \to B \oplus C \to D \xrightarrow{+1}.
\]
\end{proof}

\subsection{Groupoids}
A \emph{groupoid} is a category in which every morphism is an isomorphism. The key example considered in this article is the groupoid of line bundles $\PicS(X)$ on a scheme $X$ consisting of line bundles on $X$ together with their isomorphisms.

\emph{Morphisms between groupoids} correspond to functors between the corresponding categories. A \emph{$2$-morphism} (`homotopy') between two such functors is a natural transformation thereof (which in case of groupoids is automatically an isomorphism). This constitues a $2$-category $Groupoids$. In other words, this is a full $2$-subcategory of the $2$-category of categories whose objects are groupoids (equivalently, of the $(2,1)$-category of categories in which all $2$-morphisms are isomorphisms).

For a groupoid $\cA$, we denote by $\pi_0(\cA)$ the set of equivalence classes of the objects of $\cA$ up to isomorphism. Given a fixed point $x \in \cA$, we denote by $\pi_1(\cA,x)$ the set of isomorphisms $\Hom(x,x)$ inside $\cA$. 

A morphism $f \colon \cA \to \cB$ is an \emph{equivalence} if it is an equivalence of categories, and it is a \emph{weak equivalence} if it induces a bijection $f \colon \pi_0(\cA) \xrightarrow{\simeq} \pi_0(\cB)$ and an isomorphism of groups $f \colon \pi_1(\cA,x) \xrightarrow{\simeq} \pi_1(\cB,f(x))$ for every $x \in \cA$.

\begin{theorem}[{\cite[Proposition 4.4]{nlab:groupoid}}] \label{thm:weak-equivalence-of-groupoids} Let $f \colon \cA \to \cB$ be a morphism of groupoids. Then it is an equivalence if and only if it is a weak equivalence.
\end{theorem}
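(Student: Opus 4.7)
The plan is to treat the two implications separately, with the forward direction being a standard unwinding of the definition of an equivalence of categories and the reverse direction being the substantive one.

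For the forward direction, if $f \colon \cA \to \cB$ is an equivalence of categories then it is essentially surjective and fully faithful. Essential surjectivity says every object of $\cB$ is isomorphic to some $f(x)$, which gives surjectivity of $\pi_0(f)$; and fully faithful together with the fact that any equivalence reflects isomorphisms shows $x \cong y$ in $\cA$ iff $f(x) \cong f(y)$ in $\cB$, giving injectivity of $\pi_0(f)$. Finally, fully faithfulness at $(x,x)$ specialises to the statement that $f \colon \pi_1(\cA,x) \to \pi_1(\cB,f(x))$ is a bijection (and it is clearly a group homomorphism).

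For the reverse direction, suppose $f$ is a weak equivalence. Essential surjectivity is precisely surjectivity on $\pi_0$, so the only content is to show $f$ is fully faithful, i.e.\ a bijection $\Hom_\cA(x,y) \to \Hom_\cB(f(x),f(y))$ for every pair of objects $x,y \in \cA$. I would split into two cases depending on whether $x \cong y$ in $\cA$. If $x \not\cong y$, then by injectivity of $\pi_0(f)$ also $f(x) \not\cong f(y)$, so both Hom sets are empty and there is nothing to prove. If $x \cong y$, choose an isomorphism $\alpha \colon x \to y$ in $\cA$. Since $\cA$ is a groupoid, post-composition with $\alpha^{-1}$ gives a bijection $\Hom_\cA(x,y) \xrightarrow{\simeq} \Hom_\cA(x,x) = \pi_1(\cA,x)$, and similarly post-composition with $f(\alpha)^{-1}$ gives a bijection $\Hom_\cB(f(x),f(y)) \xrightarrow{\simeq} \pi_1(\cB,f(x))$. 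These bijections intertwine with $f$, so bijectivity of $f$ on $\Hom_\cA(x,y)$ is equivalent to bijectivity on $\pi_1(\cA,x)$, which is the hypothesis.

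The only mildly tricky point is keeping track of basepoints: the hypothesis on $\pi_1$ is stated for every $x \in \cA$, but a priori one might want to compare $\pi_1(\cA,y)$ for some other representative via an isomorphism, and the argument above shows that this basepoint-change trick is exactly what makes fully faithfulness follow from the hypothesis. I do not expect any serious obstacle; the proof is essentially categorical bookkeeping and can be written in a few lines.
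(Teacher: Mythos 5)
Your proof is correct. Note that the paper does not actually prove this statement---it is quoted verbatim from the nLab with a citation---so there is no in-paper argument to compare against; your argument (forward direction by unwinding full faithfulness and essential surjectivity, reverse direction by the basepoint-change trick reducing $\Hom_\cA(x,y)$ to $\pi_1(\cA,x)$ via post-composition with a chosen isomorphism, plus the empty-Hom case handled by injectivity on $\pi_0$) is exactly the standard one the citation points to, and it is complete as written.
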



A \emph{commutative diagram} 
\begin{center}
\begin{tikzcd}
\cA \arrow{r}{f} \arrow{d}[swap]{h} & \cB \arrow{d}{g} \arrow[shift left = 0.2em, shorten <= 1em, shorten >= 1em, Rightarrow]{ld}[swap]{F} \\
\cC \arrow{r}[swap]{i} & \cD
\end{tikzcd}
\end{center}
in the $2$-category of groupoids consists of groupoids $\cA$, $\cB$, $\cC$, $\cD$ and morphisms $f$, $g$, $h$, $i$ as above, together with a $2$-morphism $F \colon g \circ f \implies i \circ h$ (cf.\ \cite[Tag 003O]{stacks-project}). We emphasise that $F$ is a part of the data defining a commutative diagram. We will usually drop $F$ from the notation and remember that it is there implicitly.  

\begin{definition}[{\cite[Tag 003Q]{stacks-project}}] We say that a commutative diagram as above is \emph{Cartesian} (also called a \emph{$2$-pullback square}) if for every groupoid $\cM$, morphisms $b \colon \cM \to \cB$ and $c \colon \cM \to \cC$, and a $2$-morphism $G \colon g \circ b \implies i \circ c$, there exists a morphism $a \colon \cM \to \cA$ rendering the diagram below commutative and which is \emph{unique up to a unique homotopy}.  
\begin{center}
\begin{tikzcd}
\cM \arrow[dashed]{rd}{a} \arrow[bend left = 15]{rrd}{b} \arrow[bend right = 15]{rdd}[swap]{c} & & \\
& \cA \arrow{r}{f} \arrow{d}{h} & \cB \arrow{d}{g} \\
& \cC \arrow{r}{i} & \cD
\end{tikzcd}
\end{center}
\end{definition}
\noindent In particular, $a \colon \cM \to \cA$ comes equipped  with $2$-morphisms $B \colon f \circ a \implies b$ and $C \colon h \circ a \implies c$ witnessing the commutativity of the diagrams. In the above definition, being unique up to a unique homotopy means that given $a \colon \cM \to \cA$ and $a' \colon \cM \to \cA$ rendering the above diagram commutative there is a unique $2$-morphism $H \colon a \implies a'$ satisfying: $C' \circ H = C$ and $B' \circ H = B$. Here, $B'$ and $C'$ are the $2$-morphisms witnessing the commutativity for $a'$.

Equivalently, the above commutative diagram is Cartesian, if $\cM \simeq \cC \times_{\cD} \cB$, where $\cC \times_{\cD} \cB = \varprojlim \big( \cC \to \cD \leftarrow \cB \big)$ is a groupoid whose objects are triples $(c,b, \phi)$ where $c \in \cC$, $b \in \cB$ and $\phi \in \Hom(i(c), g(b))$, and morphisms between $(c,b,\phi)$ and $(c',b',\phi')$ consist of isomorphisms $c \simeq c'$ and $b \simeq b'$ which commute with $\phi$ and $\phi'$ in $\cD$ (see \cite[Tag 02X9]{stacks-project}).\\

For a morphism of groupoids $f \colon \cA \to \cB$ we define the \emph{homotopy fibre} $\hofib_b(f)$ over $b \in \cB$ to be the groupoid $\cA \times_{\cB} \{b\}$, where $\{b\}$ denotes a trivial groupoid with a map to $b \in \cB$. Explicitly, the objects of this groupoid are pairs $(a, \phi)$ where $a \in \cA$ and $\phi \in \Hom(f(a),b)$, and morphisms between $(a,\phi)$ and $(a', \phi')$ are given by a morphism $\psi \in \Hom(a,a')$ such that $\phi = \phi' \circ f(\psi)$.

In particular, Theorem \ref{thm:weak-equivalence-of-groupoids} implies that a morphism of groupoids $f \colon \cA \to \cB$ is an equivalence if and only if $\hofib_b(f)$ is equivalent to a trivial groupoid for every $b \in \cB$ (for example, use the long exact sequence of homotopy groups). Here, the essential surjectivity of $f$ (that is, surjectivity on $\pi_0$) is equivalent to $\hofib_b(f)$ being non-empty for every $b \in \cB$.
\begin{lemma} \label{lem:cartiesianity-checked-by-fibres}
A \emph{commutative diagram} of groupoids
\begin{center}
\begin{tikzcd}
\cA \arrow{r}{f} \arrow{d}[swap]{h} & \cB \arrow{d}{g} \\
\cC \arrow{r}[swap]{i} & \cD
\end{tikzcd}
\end{center}
is Cartesian if and only if the induced map $\hofib_{b}(f) \to \hofib_{g(b)}(i)$ is an equivalence for every $b \in \cB$.
\end{lemma}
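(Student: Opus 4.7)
The plan is first to compute the homotopy fibre of the projection from the strict $2$-fibre product $\cA' := \cC \times_\cD \cB$ down to $\cB$, and then to use this computation together with the assumption to show that the natural comparison functor $\cA \to \cA'$ is an equivalence.

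For the forward direction, assume the square is Cartesian. Up to equivalence one may then take $\cA = \cC \times_\cD \cB$ with $f$ the second projection. A direct unwinding of the explicit description of $\cC \times_\cD \cB$ recalled just before the statement produces a functor
\[
\hofib_b(f) \longrightarrow \hofib_{g(b)}(i), \qquad \bigl((c, b', \psi\colon i(c)\to g(b')),\, \eta\colon b' \to b\bigr) \longmapsto \bigl(c,\, g(\eta)\circ \psi\bigr),
\]
whose essential inverse sends $(c, \psi)$ to $((c, b, \psi), \id_b)$; checking these are mutually inverse up to natural isomorphism is routine.

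For the backward direction, the universal property of the $2$-pullback supplies a comparison functor $\alpha \colon \cA \to \cA'$ commuting with the projections to $\cB$, and by Theorem \ref{thm:weak-equivalence-of-groupoids} it suffices to verify that $\alpha$ is essentially surjective and fully faithful. Since $\alpha$ is compatible with these projections, it induces, for every $b \in \cB$, a functor $\alpha_b \colon \hofib_b(f) \to \hofib_b(f')$ fitting into a triangle over $\hofib_{g(b)}(i)$ whose two slanted edges are equivalences, by assumption and by the forward direction respectively; hence $\alpha_b$ is an equivalence for every $b$.

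From the fibrewise equivalence one then reads off both halves of what is needed. For essential surjectivity, given $a' \in \cA'$, set $b := f'(a')$; the pair $(a', \id_b) \in \hofib_b(f')$ lifts to some $(a, \phi) \in \hofib_b(f)$, and the lift provides an isomorphism $\alpha(a) \simeq a'$ in $\cA'$. For full faithfulness one uses the canonical decomposition
\[
\Hom_\cA(a_1,a_2) \;=\; \bigsqcup_{\phi \colon f(a_1) \to f(a_2)} \Hom_{\hofib_{f(a_2)}(f)}\bigl((a_1,\phi),(a_2,\id)\bigr),
\]
and the analogous one for $\cA'$: the functor $\alpha$ preserves the indexing set and restricts on each summand to $\alpha_{f(a_2)}$, which is fully faithful. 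The main source of friction, rather than any serious mathematical obstacle, will be the bookkeeping of the $2$-morphism $F \colon g \circ f \Rightarrow i \circ h$ witnessing commutativity of the square: it enters both the definition of $\alpha$ and the comparison maps $\hofib_b(f) \to \hofib_{g(b)}(i)$, and one has to check that all the resulting $2$-isomorphisms cohere. Beyond that the argument is formal $2$-categorical manipulation.
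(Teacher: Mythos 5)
Your proof is correct, but it takes a somewhat different route from the paper's. Both arguments begin from the same observation — that the homotopy fibre of the projection $\cC \times_{\cD} \cB \to \cB$ over $b$ is canonically equivalent to $\hofib_{g(b)}(i)$ (your explicit forward-direction computation; the paper cites the identification $\cC \times_{\cD} \cB \times_{\cB} \{b\} \simeq \cC \times_{\cD} \{b\}$ from the Stacks Project). They diverge in how they exploit it. The paper handles both directions uniformly by one reduction: to test whether $j \colon \cA \to \cC \times_{\cD} \cB$ is an equivalence it suffices to show each $\hofib_x(j)$ is trivial, and since homotopy fibres are unchanged under base change along $\{f'(x)\} \to \cB$, one may assume $\cB$ is a point, at which stage $j$ literally becomes the map $\hofib_b(f) \to \hofib_{g(b)}(i)$. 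You instead invoke Theorem \ref{thm:weak-equivalence-of-groupoids} and verify essential surjectivity and full faithfulness of $\alpha$ separately, deducing that each $\alpha_b$ is an equivalence by two-out-of-three and then assembling global full faithfulness from the decomposition of $\Hom_{\cA}(a_1,a_2)$ into fibrewise Hom-sets indexed by $\Hom_{\cB}(f(a_1),f(a_2))$. Both are sound; the paper's version is shorter and treats the two implications symmetrically, while yours is more elementary and self-contained (it never needs the base-change invariance of homotopy fibres), at the cost of the coherence bookkeeping for the $2$-morphism $F$ that you rightly flag. One small point worth spelling out in your write-up is that the identification of indexing sets in the full-faithfulness step is via conjugation by the components of the natural isomorphism $f' \circ \alpha \Rightarrow f$, so it is a bijection rather than an equality; this does not affect the conclusion.
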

\begin{proof}
We need to show that the induced map $j \colon \cA \to \cC \times_{\cD} \cB$ is an equivalence if and only if $\hofib_{b}(f) \to \hofib_{g(b)}(i)$ is an equivalence for every $b \in \cB$.
\begin{center}
\begin{tikzcd}
\cA \arrow[dashed]{rd}{j} \arrow[bend left = 15]{rrd}{f} \arrow[bend right = 15]{rdd}[swap]{h} & & \\
& \cC \times_{\cD} \cB \arrow{r}{f'} \arrow{d}{h'} & \cB \arrow{d}{g} \\
& \cC \arrow{r}{i} & \cD
\end{tikzcd}
\end{center}

Suppose that the latter condition holds and pick a point $x \in \cC \times_{\cD} \cB$. We need to show that $\hofib_x(j)$ is a trivial groupoid. Since $\cC \times_{\cD} \cB \times_{\cB} \{f'(x)\} \simeq \cC \times_{\cD} \{f'(x)\}$ (cf.\ \cite[Tag 02XD]{stacks-project}) we can replace $\cA$ and $\cB$ by $A \times_{\cB} \{f'(x)\}$ and $\{f'(x)\}$, respectively, and assume that $\cB = \{b\}$ is a trivial groupoid (we set $b=f'(x)$). Here we also used that base changing the map $j$ via $\cC \times_{\cD} \cB \times_{\cB} \{f'(x)\} \to \cC \times_{\cD} \cB$ does not change the homotopy fibre $\hofib_x(j)$.

By definition, we now have that $\cA \simeq \hofib_b(f)$ and $\cC \times_{\cD} \cB \simeq \hofib_{g(b)}(i)$. Hence, $j$ is an equivalence by assumptions.

The proof in the opposite direction is analogous. 
\end{proof}

Certain groupoids can be studied by means of derived categories. More precisely, strictly symmetric monoidal groupoids (such as the groupoid of line bundles) admit a presentation as simplicial abelian groups with non-trivial $\pi_0$ and $\pi_1$ only, and so via the Dold-Kan correspondence they are equivalent to complexes of cohomological amplitude $[-1,0]$ in the derived category of abelian groups. 

We explicate this construction in an elementary fashion below. Let 
\[
\ldots \to C^{-2} \xrightarrow{d_{2}} C^{-1} \xrightarrow{d_{1}} C^0 \xrightarrow{d_0} C^{1} \to \ldots 
\]
be a complex of abelian groups with possibly non-trivial cohomologies in degrees $0$ and $-1$ only. We can associate to it a groupoid $\cC$ whose objects are given by $\ker(d_0)$ and morphisms by $\Hom(x,x') = \{ y \in C^{-1}/d_{2}(C^{-2}) \, \mid \, d_{1}(y) = x - x'\}$. 

In particular, $\pi_0(\cC) = \cH^0(C^{\bullet})$ and $\pi_1(\cC) = \cH^{-1}(C^{\bullet})$. Moreover, a quasi-isomorphism $C^{\bullet} \simeq C'^{\bullet}$ of complexes induces an equivalence of corresponding groupoids $\cC \simeq \cC'$. 

Note that every complex $C^{\bullet}$ as above is quasi-isomorphic to a complex $C^{-1} \xrightarrow{d} C^0$ of length two and its corresponding groupoid has objects given by $C^0$ and morphisms between $x, x' \in C^0$ given by $y \in C^{-1}$ such that $d(y) = x-x'$.

\begin{remark} \label{remark:groupoids-and-cocones} We can also express the homotopy fibre over the trivial element in terms of derived categories. Let $f \colon C^{\bullet} \to C'^{\bullet}$ be a map of two complexes of abelian groups, each with possibly non-trivial cohomologies in degrees $0$ and $-1$ only. Then $\tau_{\leq 0}\mathrm{cocone}(f) \simeq \hofib_0(f)$ as groupoids, where $\mathrm{cocone}(f)=\mathrm{cone}(f)[-1]$ and in $\hofib_0(f)$ we identify $f$ with the map of associated groupoids as in the above construction.
\end{remark}

The following lemma generalises the fact that the Picard group $\Pic(X)$ is isomorphic to $H^1(X, \cO_X^*)$.

\begin{lemma} \label{lem:picard-groupoid-in-terms-of-derived-category} Let $X$ be a quasi-compact quasi-separated scheme. Then
\[
\PicS(X) \simeq \tau_{\leq 0}(R\Gamma(X, \cO_X^*)[1]),
\] 
where the right hand side is a groupoid via the above construction.
\end{lemma}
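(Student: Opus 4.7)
The plan is to exhibit a weak equivalence between the two groupoids and invoke Theorem \ref{thm:weak-equivalence-of-groupoids}. First I would verify that the homotopy invariants match: on the left $\pi_0(\PicS(X)) = \Pic(X) = H^1(X,\cO_X^*)$ and the automorphism group of any line bundle equals $\Gamma(X,\cO_X^*) = H^0(X,\cO_X^*)$; on the right, by the explicit complex-to-groupoid construction recalled just above the lemma, $\pi_0$ and $\pi_1$ of the groupoid associated to $\tau_{\leq 0}(R\Gamma(X,\cO_X^*)[1])$ are respectively the $H^0$ and $H^{-1}$ of that complex, namely $H^1(X,\cO_X^*)$ and $H^0(X,\cO_X^*)$.

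To promote this matching of invariants into an actual functor, I would model $R\Gamma(X,\cO_X^*)$ by a {\v C}ech-type complex against a Zariski hypercover $U_\bullet \to X$; such a hypercover exists by the qcqs hypothesis and may be chosen with each $U_n$ a disjoint union of affines. After shifting by one and truncating, I represent $\tau_{\leq 0}(R\Gamma(X,\cO_X^*)[1])$ by the two-term complex $[\cO_X^*(U_0) \xrightarrow{d} Z^1(U_\bullet)]$ placed in degrees $-1$ and $0$, where $Z^1$ denotes $1$-cocycles. By the construction from the excerpt, the associated groupoid has objects the $1$-cocycles $(g_{ij})$ and morphisms $g \to g'$ given by sections $(f_i)$ satisfying $f_i g_{ij} f_j^{-1} = g'_{ij}$; this is exactly the groupoid of line bundles on $X$ trivialised on $U_\bullet$ together with their isomorphisms, and sending a cocycle to its associated (glued) line bundle gives a natural comparison functor $F$ to $\PicS(X)$.

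The main obstacle is essential surjectivity of $F$: a fixed hypercover need not trivialise every line bundle on $X$. I would resolve this with a cofinality/refinement argument, passing to the $2$-colimit over refinements of $U_\bullet$. Each refinement induces a quasi-isomorphism of the relevant truncated complexes and hence, by the remark preceding the lemma that a quasi-isomorphism produces an equivalence of associated groupoids, an equivalence of groupoids; in the limit every line bundle on $X$ trivialises since line bundles are Zariski-locally trivial. Full faithfulness at the level of automorphisms is already captured by the identity $\pi_1 = H^0(X,\cO_X^*)$, which is intrinsic to any line bundle and independent of trivialisation data, so nothing extra needs to be checked there. Combining the two pieces yields a weak equivalence between a model for the right-hand side and $\PicS(X)$, and Theorem \ref{thm:weak-equivalence-of-groupoids} then delivers the claimed equivalence.
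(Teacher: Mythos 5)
Your proposal is correct and follows essentially the same route as the paper: both model $R\Gamma(X,\cO_X^*)$ by \v{C}ech complexes of affine covers, pass to the filtered colimit over refinements (using that cohomology commutes with filtered colimits on qcqs schemes, and that quasi-isomorphisms of the truncated complexes induce equivalences of the associated groupoids), and identify the resulting cocycle groupoid with $\PicS(X)$. The only small caveat is that full faithfulness must be verified for the comparison functor itself---an isomorphism of glued line bundles corresponds exactly to a compatible family $(f_i)$ of units by gluing of morphisms of sheaves---rather than deduced from the abstract coincidence of the $\pi_1$'s, but this is immediate.
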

\begin{proof}
We have that $R\Gamma(X, \cO_X^*)[1]$ is quasi-isomorphic to
\[
\varinjlim_{\{U_i\}} \bigoplus_i H^0(U_i, \cO^*_{U_i}) \to \varinjlim_{\{U_i\}} \bigoplus_{i,j} H^0(U_i \cap U_j, \cO^*_{U_i \cap U_j}) \to \ldots, 
\]
with the first term in degree $-1$, where the limit is taken over all refinements of affine covers (cf.\ \cite[Tag 09UY]{stacks-project}). Indeed, the natural map from this complex to $R\Gamma(X, \cO_X^*)$ (cf.\ \cite[Tag 01FD]{stacks-project}) induces an isomorphism on cohomologies (\cite[Tag 09V2]{stacks-project}), and so is a quasi-isomorphism. Here we used that cohomology commutes with filtered colimits for quasi-compact quasi-separated schemes (\cite[Tag 073E]{stacks-project}). 

In particular, the associated groupoid $\tau_{\leq 0}(R\Gamma(X, \cO_X^*)[1])$ has objects consisting of an affine covering $\{U_i\}$ (up to refinement) and transition functions $\{\phi_{i,j} \in H^0(U_i \cap U_j, \cO^*_{U_i \cap U_j}) \}$ (up to refinement) such that $\phi_{i,j} \cdot \phi_{j,k} = \phi_{i,k}$,  and morphisms given by $\{\psi_{i} \in H^0(U_i, \cO^*_{U_i}) \}$ (up to refinement) sending $\phi_{i,j} \mapsto \psi_i \cdot \phi_{i,j} \cdot \psi^{-1}_j$. This groupoid is equivalent to the groupoid of line bundles. 
\end{proof} 

Last, we discuss stacks in groupoids. To this end, we need the following definition from \cite[Tag 003N]{stacks-project}.
\begin{definition} Let $\cA$ be a category and let $\cB$ be a $2$-category. A pseudo-functor $\cF \colon \cA \to \cB$ is a collection of the following data:
\begin{itemize}
	\item a map $\cF \colon \Obj(\cA) \to \Obj(\cB)$,
	\item for every $x, y \in \Obj(\cA)$ and $f \colon x \to y$, a morphism $\cF(f) \colon \cF(x) \to \cF(y)$,
	\item for every $x,y,z \in \Obj(\cA)$ and morphisms $f \colon x \to y$ and $g \colon y \to z$, a $2$-morphism $G_{g,f} \colon \cF(g \circ f) \implies \cF(g) \circ \cF(f)$ which is an isomorphism,  
	\item for every $x \in \Obj(\cA)$, a $2$-morphism $G_x \colon \id_{\cF(x)} \implies \cF(\id)$,

\end{itemize}
which satisfy the compatibility conditions as in \cite[Tag 003N]{stacks-project}. A contravariant pseudofunctor is a pseudofunctor $\cF \colon \cA^{opp} \to \cB$. 
\end{definition}
The main difference between a functor and a pseudo-functor is that $\cF(g \circ f)$ is not equal to $\cF(g) \circ \cF(f)$ but is only isomorphic to it (via $G_{g,f}$). The association $\PicS \colon X \mapsto \PicS(X)$ is an example of a contravariant pseudofunctor 
\[
\PicS \colon Sch \to Groupoids,
\]
where $Sch$ is the category of schemes. Here, given maps $f \colon X \to Y$ and $g \colon Y \to Z$, there is a canonical $2$-morphism $G_{g,f} \colon \PicS(g \circ f) \implies \PicS(f) \circ \PicS(g)$ induced by canonical isomorphisms $f^*(g^*L) \simeq (g \circ f)^*L$ for every $L \in \PicS(Z)$. 

The datum of a pseudo-functor is equivalent to that of a category fibred in groupoids together with a choice of a cleavage. In particular, one can see that the following definition of a stack in groupoids is equivalent to the one from \cite[Tag 02ZH]{stacks-project}. 

\begin{definition} A stack in groupoids for the Zariski topology is a contravariant pseudo-functor $\cF \colon Sch \to Groupoids$ such that for every scheme $X$ and an affine covering $\{U_i\}$ of $X$, we have that
\[
\cF(X) \simeq \varprojlim \big(\cF(U) \rightrightarrows \cF(U \times_X U) \mathrel{\substack{\textstyle\rightarrow\\[-0.6ex]
                      \textstyle\rightarrow \\[-0.6ex]
                      \textstyle\rightarrow}} \cF(U \times_X U \times_X U)\big),
\]
where $U = \bigsqcup_i U_i$.
\end{definition}
This is equivalent to saying that every descent datum is effective and $\Hom(x,y)$ is a sheaf for $x, y \in \cF(X)$. 

Here, $C := \varprojlim \big(\cF(U) \rightrightarrows \cF(U \times_X U) \mathrel{\substack{\textstyle\rightarrow\\[-0.6ex]\textstyle\rightarrow \\[-0.6ex] \textstyle\rightarrow}} \cF(U \times_X U \times_X U)\big)$ is a groupoid whose objects are pairs $(u,\phi)$, where $u \in \cF(U)$ and $\phi \colon \pi_1^*(u) \xrightarrow{\simeq} \pi_2^*(u)$ is an isomorphism with $\pi_1,\pi_2 \colon U \times_X U \rightrightarrows U$ being the projections, such that $\phi$ satisfies the cocycle condition  in $\cF(U \times_X U \times_X U)$. The morphisms between $(u,\phi)$ and $(u',\phi')$ in $C$ are given by morphisms between $u$ and $u'$ which are compatible with $\phi$ and $\phi'$.

Informally speaking, $\cF(X) \to C$ is an isomorphism exactly when for every $c \in C$ there exists a unique up to a unique isomorphism $x \in \cF(X)$ with the pullback in $C$ isomorphic to $c$. 

\subsection{Sheaves and stacks for the h-topology}
Given a category $\cC$, a \emph{family of morphisms with fixed target in $\cC$} consists of an object $X \in \cC$, a set $I$, and for each $i \in I$, a morphism $X_i \to X$ in $\cC$. We denote it by $\{X_i \to X\}_{i \in I}$. A \emph{site} is a category $\cC$ together with a set $\mathrm{Cov}(\cC)$ of families of morphisms with fixed target satisfying the properties as in \cite[Tag 00VH]{stacks-project}.

Let $Sch/S$ be the category of schemes of finite type over a Noetherian base scheme $S$. We say that a finite family of morphisms with fixed target $\{X_i \to X\}_{i\in I}$ in $Sch/S$ is an \emph{h-covering} if $\bigsqcup_{i \in I} X_i \to X$ is universally submersive (cf.\ \cite[Tag 040H]{stacks-project}). This defines the \emph{h-site} $(Sch/S)_h$. Note that fppf coverings are h-coverings (\cite[Tag 0ETV]{stacks-project}), and so are proper surjective maps in $Sch/S$ (\cite[Tag 0ETW]{stacks-project}). 

Since we restricted ourselves to schemes of finite type over a Noetherian scheme, we only need to deal with finite families of morphisms. In general, the definition of an $h$-covering may be found in \cite[Tag 0ETS and 0ETT]{stacks-project}.    



\begin{definition}Let $t \in \{\mathrm{Zariski}, \mathrm{\acute{e}tale}, \mathrm{fppf}, \mathrm{h}\}$. A contravariant functor $\cF \colon Sch/S \to Sets$ is a \emph{sheaf for the t-topology} if and only if for every finite t-covering $\{U_i \to X\}_{i \in I}$ we have that $\cF(X)$ is the equaliser in 
\[
\cF(X) \to \cF(U) \rightrightarrows \cF(U \times_X U), 
\]
where $U = \bigsqcup_{i \in I} U_i$.

A contravariant pseudo-functor $\cF \colon Sch/S \to Groupoids$ is a \emph{stack in groupoids for the t-topology} if and only if for every finite t-covering $\{U_i \to X\}_{i \in I}$, we have that 
\[
\cF(X) \simeq \varprojlim \big(\cF(U) \rightrightarrows \cF(U \times_X U) \mathrel{\substack{\textstyle\rightarrow\\[-0.6ex]
                      \textstyle\rightarrow \\[-0.6ex]
                      \textstyle\rightarrow}} \cF(U \times_X U \times_X U)\big).
\]

\end{definition}
An important property of sheaves (resp.\ stacks in groupoids) $\cF$ for the t-topology is that given a t-covering $f \colon Y \to X$ we have that $f^* \colon \cF(X) \to \cF(Y)$ is injective (resp.\ faithful). 

For simplicity, given a sheaf (resp.\ stack in groupoids) $\cF$ for the t-topology, a morphism $f \colon Y \to X$, and an object $x \in \cF(X)$, we will sometimes denote $f^*x \in \cF(Y)$ by $x|_Y$.


\begin{remark}
Note that if $\cF$ is a sheaf (resp.\ stack in groupoids) for the h-topology, then $\cF(X) \to \cF(X_{red})$ induced by the reduction map $X_{\red} \to X$ is a bijection (resp.\ an equivalence). Indeed, $X_{\red} \times_X X_{\red} \simeq X_{\red}$ and $X_{\red} \times_X X_{\red} \times_X X_{\red} \simeq X_{\red}$. Moreover, $\cF(X) \to \cF(Y)$ is a bijection (resp.\ an equivalence) for every universal homeomorphism $Y \to X$ as $(Y \times_X Y)_{\red} \simeq Y_{\red}$ and $(Y \times_X Y \times_X Y)_{\red} \simeq Y_{\red}$. 
\end{remark}
The following theorem allows for verifying that a functor (resp.\ pseudo-functor) is a sheaf (resp.\ stack in groupoids) for the h-topology. 
\begin{theorem}[{\cite[Theorem 2.9]{BS17}, cf.\ \cite[Proposition 2.8]{BS17}}] \label{thm:BS-criterion-for-h-sheaves} Let $\cF$ be a functor from $Sch/S$ to $Sets$ (resp.\ pseudo-functor from $Sch/S$ to $Groupoids$). Then $\cF$ is a sheaf (resp.\ stack in groupoids) for the h-topology if and only if the following conditions are satisfied:
\begin{enumerate}
	\item $\cF$ is a sheaf (resp.\ stack in groupoids) for the fppf topology, and
	\item  the following diagram is Cartesian:
		\begin{center}
		\begin{tikzcd}
			\cF(X) \arrow{r} \arrow{d} & \cF(Y) \arrow{d} \\
			\cF(Z) \arrow{r} & \cF(E),
		\end{tikzcd}
		\end{center}
		for every affine scheme $X \in Sch/S$ and a proper surjective map $Y \to X$ which is an isomorphism outside of a closed subset $Z \subseteq X$ with preimage $E \subseteq Y$.
\end{enumerate}
\end{theorem}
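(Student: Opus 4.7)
Since every fppf covering is an h-covering by Tag 0ETV, condition (1) is immediate from the h-descent property of $\cF$. For (2), observe that $Y \sqcup Z \to X$ is proper and surjective, hence an h-covering by Tag 0ETW, and the h-descent condition for this single cover expresses $\cF(X)$ as a $2$-limit over iterated fibre products of $Y \sqcup Z$ over $X$. One unpacks this limit using $Z \times_X Z \simeq Z$ (since $Z \hookrightarrow X$ is a closed immersion), $Y \times_X Z \simeq E$, and, crucially, the hypothesis that $Y \to X$ is an isomorphism outside $Z$, which identifies $Y \times_X Y$ with the scheme-theoretic pushout of $Y$ along $E \times_Z E$. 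After this bookkeeping, the descent diagram collapses to precisely the Cartesian square in (2).

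\textbf{Reverse direction.} Assume (1) and (2). We reduce to verifying h-descent along a single morphism $f \colon U \to X$ with $X$ affine. The argument uses two external inputs: first, a structure result of Voevodsky--Rydh (implicit in BS17, Prop.\ 2.8) that any h-covering of a Noetherian scheme admits a refinement obtained by composing an fppf cover with a proper surjection; second, Raynaud--Gruson flattening, which provides for any proper surjection $g \colon Y \to X$ a blow-up $\sigma \colon X' \to X$ along a nowhere-dense closed $Z \subsetneq X$ such that the base change $Y \times_X X' \to X'$ becomes flat, hence an fppf cover. Combining the structure result with (1), it suffices to establish descent along such a proper surjection $g$.

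\textbf{Inductive argument and main obstacle.} We proceed by Noetherian induction on $\dim X$. The base case $\dim X = 0$ is handled by the remark on universal homeomorphisms together with (1), since over spectra of fields h-covers agree with fppf covers after nilreduction. For the inductive step, apply flattening to obtain $\sigma \colon X' \to X$ with centre $Z$ and exceptional $E = \sigma^{-1}(Z)$. The square $(X, X', Z, E)$ satisfies the hypotheses of (2), so $\cF(X) \simeq \cF(X') \times_{\cF(E)} \cF(Z)$ as a $2$-fibre product. Descent for $g$ pulled back to $X'$ holds by (1), since the pulled-back cover is now fppf; descent on $Z$ and $E$, which both have strictly smaller dimension than $X$, holds by the inductive hypothesis. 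A diagram chase across the $2$-Cartesian square then glues a given descent datum for $g$ into an object of $\cF(X)$, unique up to unique isomorphism in the stack case. The principal difficulty lies in this final step of $2$-categorical bookkeeping: in the stack-in-groupoids setting one must track not only sections but also the compatibility isomorphisms through the interplay of fppf descent on $X'$, inductive h-descent on $Z$ and $E$, and the excision square (2), all while verifying the cocycle condition on the triple fibre products $Y \times_X Y \times_X Y$ and their various base changes.
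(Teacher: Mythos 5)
The paper does not prove this statement at all: it is quoted verbatim from Bhatt--Scholze (\cite[Theorem 2.9]{BS17}) and used as a black box, so there is no internal proof to compare yours against. Your sketch is a plausible reconstruction of the Bhatt--Scholze strategy (reduce h-covers to fppf covers composed with proper surjections, flatten the proper surjection by a blow-up, and induct using the excision square), but as written it has genuine gaps at exactly the points where the real work happens.

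In the forward direction, the identification of $Y\times_X Y$ with ``the scheme-theoretic pushout of $Y$ along $E\times_Z E$'' is not correct as stated; what is true is that $(Y\times_X Y)_{\red}$ is the union of the diagonal copy of $Y_{\red}$ and $(E\times_Z E)_{\red}$, and to exploit this you must first establish that an h-stack is insensitive to nilpotents and satisfies descent for closed covers $A\sqcup B\to A\cup B$ --- both themselves h-descent statements that need to be invoked explicitly before the C\v{e}ch diagram ``collapses.'' In the reverse direction, Raynaud--Gruson flattening makes the \emph{strict transform} $Y'\subseteq Y\times_X X'$ flat, not the full base change $Y\times_X X'$; you then need that $Y'\to X'$ is still surjective (which uses that the centre is nowhere dense and that one may assume $X$ reduced) and that descent along the refinement $Y'$ implies descent along $Y\times_X X'$. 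The induction on $\dim X$ also needs $X$ reduced to guarantee $\dim Z<\dim X$. Most importantly, the final gluing step --- passing a descent datum for $Y\to X$ through the $2$-Cartesian excision square using fppf descent on $X'$ and the inductive hypothesis on $Z$ and $E$, including the commutation of the relevant $2$-limits and the verification of cocycle conditions --- is precisely the content of the theorem in the stack case, and you explicitly defer it rather than carry it out. So the proposal is a correct outline of the known proof, but not a complete proof.
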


We explain some properties of stacks in groupoids for the h-topology that will be used later. The first two pertain to stacks which satisfy an additional (strong) condition that pullbacks under certain maps are fully faithful. 
\begin{lemma} \label{lem:essential-image-h-topology} Let $f \colon Y \to X$ and $g \colon X' \to X$ be proper surjective morphisms of Noetherian schemes. Suppose that $f$ has geometrically connected fibres. Let 
\begin{center}
\begin{tikzcd}
Y' \arrow{r}{g_Y} \arrow{d}{f'} & Y \arrow{d}{f} \\
X' \arrow{r}{g} & X,  
\end{tikzcd}
\end{center}
be a Cartesian diagram.  Let $\cF$ be a stack in groupoids for the h-topology on $Sch/X$ such that for every proper surjective map with geometrically connected fibres $h \colon W \to Z$  of finite type schemes over $X$, the morphism $h^* \colon \cF(Z) \to \cF(W)$ is fully faithful.

Then $\xi \in \cF(Y)$ is in the essential image of $f^* \colon \cF(X) \to \cF(Y)$ if and only if $g_Y^*\xi$ is in the essential image of $f'^* \colon \cF(X') \to \cF(Y')$.
\end{lemma}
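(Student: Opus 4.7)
The forward implication is formal: if $\xi \simeq f^*\eta$ for some $\eta \in \cF(X)$, then
\[
g_Y^*\xi \simeq g_Y^* f^* \eta \simeq f'^* g^* \eta,
\]
using the canonical 2-isomorphisms from the pseudofunctor structure (and the identity $f \circ g_Y = g \circ f'$).

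For the converse, suppose $g_Y^*\xi \simeq f'^*\eta'$ for some $\eta' \in \cF(X')$. The strategy is to descend $\eta'$ along the h-covering $g \colon X' \to X$ (proper surjective maps are h-coverings) to obtain $\eta \in \cF(X)$, and then to descend the induced isomorphism along the h-covering $g_Y \colon Y' \to Y$ to obtain $f^*\eta \simeq \xi$. The nontrivial input is producing a descent datum for $\eta'$: an isomorphism $\phi \colon p_1^*\eta' \xrightarrow{\simeq} p_2^*\eta'$ on $X'' := X' \times_X X'$ satisfying the cocycle condition on $X' \times_X X' \times_X X'$, where $p_1, p_2 \colon X'' \to X'$ are the projections.

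To construct $\phi$, pull back $p_i^*\eta'$ to $Y'' := Y \times_X X' \times_X X'$ via the base change $f''_{12} \colon Y'' \to X''$ of $f$, and use that $f''_{12} \circ q_i = f' \circ p_i^Y$ where $q_i \colon Y'' \to Y'$ are the projections. Both pullbacks are canonically identified with $(g_Y \circ q_i)^*\xi$, and since $g_Y \circ q_1 = g_Y \circ q_2$ (both equal the projection $Y'' \to Y$), there is a tautological isomorphism between them. The base change $f''_{12}$ is proper surjective with geometrically connected fibres, so $(f''_{12})^*$ is fully faithful on $\cF$ by hypothesis, and hence this isomorphism descends uniquely to the desired $\phi$. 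The cocycle condition for $\phi$ on $X' \times_X X' \times_X X'$ is verified analogously: pull back to $Y \times_X X' \times_X X' \times_X X'$ (again a geometrically connected base change of $f$), where the cocycle becomes a manifest identity between isomorphisms that all factor through the three canonical identifications $g_Y \circ q_i^{(3)} = \mathrm{proj}_Y$, and descend this identity using full faithfulness once more.

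Once $\phi$ is constructed, the h-stack property applied to the h-covering $g$ produces $\eta \in \cF(X)$ with $g^*\eta \simeq \eta'$ compatibly with $\phi$. It remains to check $f^*\eta \simeq \xi$: we have $g_Y^*(f^*\eta) \simeq f'^*g^*\eta \simeq f'^*\eta' \simeq g_Y^*\xi$, and to descend this isomorphism along the h-covering $g_Y \colon Y' \to Y$ (a base change of $g$) we verify the cocycle condition on $Y' \times_Y Y' = Y''$. This cocycle follows from the compatibility of the descent datum $\phi$ with the canonical descent datum of $g^*\eta$, again after base changing along $f''_{12}$ and invoking its full faithfulness. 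The main obstacle in the argument is keeping careful track of the identifications along the four base changes of $f$ to $X'$, $X''$, $X''' = X' \times_X X' \times_X X'$ (and the corresponding $Y$-fibre products) and ensuring that the full faithfulness hypothesis is applied to a map that genuinely has geometrically connected fibres; once one checks that all these base changes are such, everything is formal descent theory.
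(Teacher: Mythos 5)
Your proposal is correct and follows essentially the same route as the paper: both arguments hinge on the identification $Y'\times_Y Y' \simeq Y\times_X X'\times_X X'$ (and its triple-product analogue), which exhibits the vertical comparison maps as base changes of $f$ with geometrically connected fibres, so that full faithfulness lets you transport the tautological descent datum of $\xi$ along $Y'\to Y$ down to a descent datum for $\eta'$ along $X'\to X$, and then conclude by the h-stack property. The paper packages this as a map of \v{C}ech-type limit diagrams while you construct the cocycle by hand, but the content is identical.
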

\noindent The reader is encouraged to first consider an analogous statement for $h$-sheaves with full faithfulness replaced by injectivity.
\begin{proof}
Set $\xi' = g_Y^*\xi$. The implication from the left to the right is automatic, so we focus on the other one. Let $\eta' \in \cF(X')$ be such that $f'^*\eta' \simeq \xi'$ and consider the following diagram
\begin{center}
\begin{tikzcd}
\cF(Y' \times_Y Y' \times_Y Y')  & \arrow[shift left = 2]{l}  \arrow{l} \arrow[shift right = 2]{l} \cF(Y' \times_Y Y')&   \arrow[shift left = 1]{l}  \arrow[shift right = 1]{l}   \cF(Y')   & \arrow{l}{g^*_Y} \cF(Y)  \\
\cF(X'\times_X X' \times_X X') \arrow{u}  & \arrow[shift left = 2]{l} \arrow{l} \arrow[shift right = 2]{l} \cF(X'\times_X X') \arrow{u}  & \arrow[shift left = 1]{l} \arrow[shift right = 1]{l}  \cF(X')  \arrow{u}{f'^*} &  \arrow{l}{g^*} \cF(X) \arrow{u}{f^*}. 
\end{tikzcd}
\end{center}
Note that the vertical arrows are fully faithful as they are induced by proper surjective maps with geometrically connected fibres (here we use that $(X'\times_X X') \times_X Y \simeq Y'\times_Y Y'$ and analogously for the triple product).
Since $\xi$ induces an object in $\varprojlim \big(\cF(Y') \rightrightarrows \cF(Y'\times_Y Y') \mathrel{\substack{\textstyle\rightarrow\\[-0.6ex] \textstyle\rightarrow \\[-0.6ex]\textstyle\rightarrow}} \cF(Y'\times_Y Y' \times_Y Y') \big)$ extending $\xi'$ and the vertical arrows are fully faithful, we get an induced object in
\[
\varprojlim \big(\cF(X') \rightrightarrows \cF(X'\times_X X') \mathrel{\substack{\textstyle\rightarrow\\[-0.6ex] \textstyle\rightarrow \\[-0.6ex]\textstyle\rightarrow}} \cF(X'\times_X X' \times_X X') \big)
\]  
extending $\eta'$. Explicitly, the isomorphism between the pullbacks of $\xi'$ to $\cF(Y'\times_Y Y')$ descends uniquely to an isomorphism between pullbacks of $\eta'$ in $\cF(X'\times_X X')$, and since the former satisfies the cocycle condition in $\cF(Y'\times_Y Y' \times_Y Y')$, so does the latter in $\cF(X'\times_X X' \times_X X')$. 

The above limit is equivalent to $\cF(X)$ by the defining property of stacks for the h-topology, and so we get $\eta \in \cF(X)$ such that $g^*\eta \simeq \eta'$. The pullback of $\eta$ in  $\varprojlim \big(\cF(Y') \rightrightarrows \cF(Y'\times_Y Y') \mathrel{\substack{\textstyle\rightarrow\\[-0.6ex] \textstyle\rightarrow \\[-0.6ex]\textstyle\rightarrow}} \cF(Y'\times_Y Y' \times_Y Y') \big) \simeq \cF(Y)$ is isomorphic to  $\xi$ by construction.
\end{proof}


\begin{lemma} \label{lem:essential-image-h-topology2}
 Let $f \colon Y \to X$ be a proper surjective morphism of Noetherian schemes and let $\cF$ be a stack in groupoids for the h-topology on $Sch/X$. 
Let $Y= Y_1 \cup Y_2$ for closed subschemes $Y_1$ and $Y_2$. Suppose that $f(Y_1\cap Y_2) = X_1 \cap X_2$ and $(f|_{Y_1\cap Y_2})^* \colon \cF(X_1 \cap X_2) \to \cF(Y_1 \cap Y_2)$ is fully faithful, where $X_1 = f(Y_1)$ and $X_2 = f(Y_2)$. 

 Then $\xi \in \cF(Y)$ is in the essential image of $\cF(X) \to \cF(Y)$ if and only if $\xi|_{Y_1}$ is in the essential image of $\cF(X_1) \to \cF(Y_1)$ and $\xi|_{Y_2}$ is in the essential image of $\cF(X_2) \to \cF(Y_2)$.
 \end{lemma}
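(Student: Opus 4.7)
The ``only if'' direction is immediate since pullback is functorial. For the converse, fix isomorphisms $\phi_i \colon (f|_{Y_i})^*\eta_i \xrightarrow{\simeq} \xi|_{Y_i}$ for $i=1,2$ witnessing that $\xi|_{Y_i}$ lies in the essential image of $\cF(X_i) \to \cF(Y_i)$. Since $f$ is surjective and $Y = Y_1 \cup Y_2$, we have $X = X_1 \cup X_2$, and so the closed immersion $X_1 \sqcup X_2 \to X$ is proper and surjective, hence an h-covering. My plan is to descend the pair $(\eta_1,\eta_2)$ to an object $\eta \in \cF(X)$ by producing a gluing on the double self-product, and then to obtain the isomorphism $f^*\eta \simeq \xi$ by a second descent along the h-covering $Y_1 \sqcup Y_2 \to Y$.

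To build the gluing, I would first observe that $\phi_2^{-1} \circ \phi_1$, restricted to $Y_1 \cap Y_2$, yields an isomorphism
\[
(f|_{Y_1 \cap Y_2})^*(\eta_1|_{X_1 \cap X_2}) \xrightarrow{\simeq} (f|_{Y_1 \cap Y_2})^*(\eta_2|_{X_1 \cap X_2}),
\]
which, by the full-faithfulness hypothesis on $(f|_{Y_1 \cap Y_2})^*$, descends uniquely to an isomorphism $\psi \colon \eta_1|_{X_1 \cap X_2} \xrightarrow{\simeq} \eta_2|_{X_1 \cap X_2}$. The data of $\eta_1 \sqcup \eta_2$ on $X_1 \sqcup X_2$, together with the identities on $X_i \times_X X_i = X_i$, the map $\psi$ on $X_1 \times_X X_2 = X_1 \cap X_2$, and its inverse on $X_2 \times_X X_1$, then assembles into a gluing datum on $(X_1 \sqcup X_2) \times_X (X_1 \sqcup X_2)$. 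The cocycle check on the triple self-product splits into eight components, each of which is either a trivial identity statement or reduces to $\psi \circ \psi^{-1} = \id$ (or its mirror); invoking the stack property of $\cF$ for the h-topology would then deliver $\eta \in \cF(X)$ with canonical identifications $\eta|_{X_i} \simeq \eta_i$.

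Finally, I would verify $f^*\eta \simeq \xi$. The above identifications combined with the $\phi_i$ produce isomorphisms $\alpha_i \colon (f^*\eta)|_{Y_i} \xrightarrow{\simeq} \xi|_{Y_i}$, and by the very definition of $\psi$ one has $\alpha_1|_{Y_1 \cap Y_2} = \alpha_2|_{Y_1 \cap Y_2}$: this is precisely the relation $\phi_1|_{Y_1 \cap Y_2} = \phi_2|_{Y_1 \cap Y_2} \circ (f|_{Y_1 \cap Y_2})^*\psi$ used to define $\psi$. Since $\Hom$ is a sheaf for the h-topology in any stack in groupoids, the $\alpha_i$ would then glue to the required isomorphism $f^*\eta \simeq \xi$ along the h-covering $Y_1 \sqcup Y_2 \to Y$. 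The only substantive use of the hypotheses is the descent of $\psi$ via full faithfulness of $(f|_{Y_1 \cap Y_2})^*$; the remaining work is a careful but routine bookkeeping of $2$-categorical compatibilities, which is where one must be most vigilant.
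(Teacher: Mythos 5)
Your argument is correct and follows essentially the same route as the paper: descend $\phi_2^{-1}\circ\gamma\circ\phi_1$ (where $\gamma$ is the canonical identification $\xi|_{Y_1}|_{Y_1\cap Y_2}\simeq \xi|_{Y_2}|_{Y_1\cap Y_2}$ that your notation suppresses) to $\psi$ via full faithfulness, glue $\eta_1,\eta_2$ along the h-cover $X_1\sqcup X_2\to X$, and recover $f^*\eta\simeq\xi$ by gluing morphisms along $Y_1\sqcup Y_2\to Y$. The only slip is calling $X_1\sqcup X_2\to X$ a closed immersion (it is merely finite surjective), which does not affect the argument since proper surjectivity is all that is needed for it to be an h-covering.
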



 \begin{proof}
 The implication from the left to the right is automatic, and so we show the other one. 
Write $f_1 = f|_{Y_1}$,$f_2 = f|_{Y_2}$, $\xi_1 = \xi|_{Y_1}$, and $\xi_2 = \xi|_{Y_2}$. Let $\eta_1 \in \cF(X_1)$ and $\eta_2 \in \cF(X_2)$ be such that $f_1^*\eta_1 \simeq \xi_1$ and $f_2^*\eta_2 \simeq \xi_2$. Let us call these isomorphisms $\phi_1$ and $\phi_2$, respectively. Also, denote the canonical isomorphism $\xi_1|_{Y_1\cap Y_2} \simeq \xi_2|_{Y_1 \cap Y_2}$ by $\gamma$. 

Since $f^* \colon \cF(X_1 \cap X_2) \to \cF(Y_1\cap Y_2)$ is fully faithful, the isomorphism $f_1^*\eta_1|_{Y_1\cap Y_2} \simeq f_2^*\eta_2|_{Y_1\cap Y_2}$ equal to $\phi^{-1}_2 \circ \gamma \circ \phi_1$ descends to an isomorphism
 \[
\eta_1|_{X_1 \cap X_2} \simeq \eta_2|_{X_1 \cap X_2}.
 \] 
 Since $X_1 \sqcup X_2 \to X$ is an h-cover, the objects $\eta_1$, $\eta_2$ and this isomorphism induce an object $\eta \in \cF(X)$. Moreover, $f^*\eta \simeq \xi$ as $\xi$ is determined up to an isomorphism by its restrictions to $Y_1$ and $Y_2$ and the isomorphism thereof on $Y_1\cap Y_2$. 

 \end{proof}

 Last, we make the following observation. We refer to the introduction for the motivation for the construction of $\underline{\cF}$.
 \begin{lemma} \label{lem:homotopy-fibre-of-t-stack-is-t-stack} Let $\cF$ be a stack in groupoids for the t-topology on $Sch/S$ where $S$ is a Noetherian scheme and $t \in \{ \mathrm{Zariski}, \mathrm{\acute{e}tale}, \mathrm{fppf}, \mathrm{h}\}$. Fix an element $m \in \cF(S_{\bQ})$ and define a pseudofunctor $\underline{\cF} \colon Sch/S \to Groupoids$ sending $X \mapsto \hofib(\cF(X) \to \cF(X_{\bQ}))$ where the homotopy fibre is taken over $m|_{X_{\bQ}} \in \cF(X_{\bQ})$. Then $\underline{\cF}$ is a stack in groupoids for the t-topology.

 The same holds for sheaves $\cF$ for the t-topology, where $\underline{\cF}(X)$ is the inverse image of $m|_{X_{\bQ}}$ under $\cF(X) \to \cF(X_{\bQ})$.
 \end{lemma}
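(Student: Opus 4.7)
The plan is to verify the defining descent condition for $\underline{\cF}$ by reducing it, step by step, to the descent condition for $\cF$. Let $\{U_i \to X\}_{i \in I}$ be a finite t-covering in $Sch/S$ and set $U = \bigsqcup_i U_i$. The first observation I would make is that for each $t \in \{\mathrm{Zariski}, \mathrm{\acute{e}tale}, \mathrm{fppf}, \mathrm{h}\}$, t-coverings are stable under arbitrary base change, so $\{U_{i,\bQ} \to X_{\bQ}\}$ is again a t-covering. Moreover base change commutes with fibre products, so $(U \times_X U)_{\bQ} \simeq U_{\bQ} \times_{X_{\bQ}} U_{\bQ}$ and similarly for the triple product. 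Hence the descent diagram for $\cF$ applied to the covering $\{U_{i,\bQ} \to X_{\bQ}\}$ is precisely the $\bQ$-fibre of the descent diagram for $\cF$ applied to $\{U_i \to X\}$. I would also note that the restriction of $\cF$ to $Sch/S_{\bQ}$ is still a stack in groupoids for the t-topology.

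I would then treat the sheaf case first, since it is strictly simpler. An element of $\underline{\cF}(X)$ is an $x \in \cF(X)$ with $x|_{X_{\bQ}} = m|_{X_{\bQ}}$; an element of the equaliser of $\underline{\cF}(U) \rightrightarrows \underline{\cF}(U\times_X U)$ is a $y \in \cF(U)$ with $y|_{U_{\bQ}} = m|_{U_{\bQ}}$ and $\pi_1^* y = \pi_2^* y$ in $\cF(U\times_X U)$. Since $\cF$ is a sheaf on $Sch/S$, such $y$ descends uniquely to $x \in \cF(X)$. Applying the sheaf property of $\cF$ to the t-covering $U_{\bQ} \to X_{\bQ}$, the equality $(x|_{X_{\bQ}})|_{U_{\bQ}} = y|_{U_{\bQ}} = (m|_{X_{\bQ}})|_{U_{\bQ}}$ forces $x|_{X_{\bQ}} = m|_{X_{\bQ}}$, so $x \in \underline{\cF}(X)$.

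For the stack case I would run the same argument one categorical level up. An object of
\[
\varprojlim\big(\underline{\cF}(U) \rightrightarrows \underline{\cF}(U\times_X U) \mathrel{\substack{\textstyle\rightarrow\\[-0.6ex] \textstyle\rightarrow \\[-0.6ex] \textstyle\rightarrow}} \underline{\cF}(U\times_X U\times_X U)\big)
\]
unwinds to the data of $y \in \cF(U)$, an isomorphism $\psi_y \colon y|_{U_{\bQ}} \xrightarrow{\simeq} m|_{U_{\bQ}}$, and an isomorphism $\theta \colon \pi_1^*y \xrightarrow{\simeq} \pi_2^*y$ in $\cF(U\times_X U)$ satisfying the cocycle condition on the triple product and compatible on $(U\times_X U)_{\bQ}$ with the two pullbacks of $\psi_y$. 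By the stack property of $\cF$, the pair $(y,\theta)$ descends to an object $x \in \cF(X)$ unique up to unique isomorphism, with a chosen isomorphism $x|_U \simeq y$ compatible with $\theta$. Restricting to $\bQ$, the isomorphism $\psi_y$ together with its compatibility with $\theta$ on $(U\times_X U)_{\bQ}$ constitutes a morphism in the limit that computes $\mathrm{Hom}_{\cF(X_{\bQ})}(x|_{X_{\bQ}}, m|_{X_{\bQ}})$; since in a stack hom-presheaves are sheaves, $\psi_y$ descends uniquely to an isomorphism $\phi \colon x|_{X_{\bQ}} \xrightarrow{\simeq} m|_{X_{\bQ}}$. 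The pair $(x,\phi)$ is the sought object of $\underline{\cF}(X)$, and the same descent properties applied to morphisms (again at both $X$ and $X_{\bQ}$) give the uniqueness up to unique isomorphism.

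The argument is essentially formal, and its only real obstacle is careful bookkeeping: one must track that each datum and compatibility of the 2-limit defining descent for $\underline{\cF}$ unpacks into matching data for $\cF$ over $X$ and for $\cF$ over $X_{\bQ}$ simultaneously. No deeper input is needed beyond the stack property of $\cF$, the stability of t-coverings under base change to $\bQ$, and the fact (implicit in the definition of a stack) that morphisms in the fibre categories satisfy descent.
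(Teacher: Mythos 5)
Your proposal is correct and follows essentially the same route as the paper: descend the underlying object via the stack property of $\cF$ over the covering of $X$, then descend the trivialisation to $m$ via the sheaf property of Hom-presheaves over the induced covering of $X_{\bQ}$. The paper likewise leaves the uniqueness verification to the reader, so there is nothing missing relative to its own level of detail.
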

 \begin{proof}
 We focus on the case of stacks as the case of sheaves is analogous (and much simpler).
Let $f \colon Y \to X$ be a covering for the t-topology and consider the following diagram
\begin{center}
\begin{tikzcd}
\cF(Y_{\bQ}\times_{X_{\bQ}} Y_{\bQ} \times_{X_{\bQ}} Y_{\bQ})  & \arrow[shift left = 2]{l} \arrow{l} \arrow[shift right = 2]{l} \cF(Y_{\bQ}\times_{X_{\bQ}} Y_{\bQ}) & \arrow[shift left = 1]{l} \arrow[shift right = 1]{l}  \cF(Y_{\bQ})  &  \arrow{l} \cF(X_{\bQ}) \\ 
\cF(Y \times_X Y \times_X Y) \arrow{u} & \arrow[shift left = 2]{l}  \arrow{l} \arrow[shift right = 2]{l} \cF(Y \times_X Y) \arrow{u} &   \arrow[shift left = 1]{l}  \arrow[shift right = 1]{l}   \cF(Y) \arrow{u}  & \arrow{l} \cF(X) \arrow{u} \\
\underline{\cF}(Y \times_X Y \times_X Y) \arrow{u}  & \arrow[shift left = 2]{l}  \arrow{l} \arrow[shift right = 2]{l} \underline{\cF}(Y \times_X Y) \arrow{u} &   \arrow[shift left = 1]{l}  \arrow[shift right = 1]{l}   \underline{\cF}(Y) \arrow{u}   & \arrow{u} \arrow{l} \underline{\cF}(X). 
\end{tikzcd}
\end{center}
Take $\underline{\eta} \in \varprojlim \big(\underline{\cF}(Y) \rightrightarrows \underline{\cF}(Y\times_X Y) \mathrel{\substack{\textstyle\rightarrow\\[-0.6ex] \textstyle\rightarrow \\[-0.6ex]\textstyle\rightarrow}}\underline{\cF}(Y\times_X Y \times_X Y) \big)$. It induces $\eta_Y \in \varprojlim \big({\cF}(Y) \rightrightarrows {\cF}(Y\times_X Y) \mathrel{\substack{\textstyle\rightarrow\\[-0.6ex] \textstyle\rightarrow \\[-0.6ex]\textstyle\rightarrow}}{\cF}(Y\times_X Y \times_X Y) \big)$ which by the definition of a stack is a pullback of $\eta \in \cF(X)$. 

Moreover, by definition of homotopy fibres, $\underline{\eta}$ also yields an object 
\[
\sigma_{Y_{\bQ}} \in \varprojlim \big(\Hom(\eta|_{Y_{\bQ}}, m|_{Y_{\bQ}}) \rightrightarrows \Hom(\eta|_{Y_{\bQ}\times_{X_{\bQ}} Y_{\bQ}}, m|_{Y_{\bQ}\times_{X_{\bQ}} Y_{\bQ}})\big),
\]
which by the property of stacks for the upper row in the first diagram descends to $\sigma \in \Hom(\eta|_{X_{\bQ}}, m|_{X_{\bQ}})$. Finally, $\eta$ and $\sigma$ yield an object of $\underline{\cF}(X)$. We leave the verification that this object in unique up to a unique isomorphism to the reader.
 \end{proof}

\subsection{Recollection of some results from Cascini-Tanaka}
In this subsection, we recall some general results proven in \cite{ct17}.
\begin{lemma}[{\cite[Lemma 2.11]{ct17}}] \label{lem:semiample-under-pullback}
Let $h \colon X' \to X$ be a proper morphism of proper schemes over a Noetherian base scheme $S$. Let $L$ be a line bundle on $X$. If $L$ is semiample over $S$, then so is $h^*L$. Moreover, the converse is true if $h_*\cO_{X'} = \cO_X$ or $S$ is excellent, $X$ is normal, and $h$ is surjective.
\end{lemma}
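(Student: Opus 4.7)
The plan for the forward direction is straightforward: pick $m$ so that $\pi^{*}\pi_{*}L^{\otimes m}\twoheadrightarrow L^{\otimes m}$ is surjective, apply $h^{*}$, and factor through the unit of the adjunction $(\pi h)^{*}\dashv (\pi h)_{*}$ to get
\[
(\pi h)^{*}\pi_{*}L^{\otimes m}\to (\pi h)^{*}(\pi h)_{*}h^{*}L^{\otimes m}\to h^{*}L^{\otimes m}.
\]
The composition is surjective as the pullback of a surjection, hence so is the second arrow, exhibiting $h^{*}L^{\otimes m}$ as base point free over $S$.

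For the converse under $h_{*}\cO_{X'}=\cO_{X}$, I would first observe that this identity together with properness forces $h$ to be surjective: otherwise $h(X')\subsetneq X$ is closed by properness, so $h_{*}\cO_{X'}$ would vanish on some nonempty open of $X\setminus h(X')$, contradicting $h_{*}\cO_{X'}=\cO_{X}$. The projection formula then gives $(\pi h)_{*}h^{*}L^{\otimes m}=\pi_{*}(L^{\otimes m}\otimes h_{*}\cO_{X'})=\pi_{*}L^{\otimes m}$, so the surjection witnessing base-point-freeness of $h^{*}L^{\otimes m}$ is nothing but $h^{*}$ applied to the canonical map $\pi^{*}\pi_{*}L^{\otimes m}\to L^{\otimes m}$. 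Denoting its cokernel by $Q$, we have $h^{*}Q=0$; since $\Supp h^{*}Q = h^{-1}(\Supp Q)$ and $h$ is surjective, this forces $\Supp Q=\emptyset$, i.e., $Q=0$.

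The remaining case — $S$ excellent, $X$ normal, $h$ merely surjective — is the main obstacle, and my plan is a two-step reduction followed by an appeal to classical descent of semiampleness. First, I would form the Stein factorization $h=g\circ h'$ with $h'\colon X'\to Y$ satisfying $h'_{*}\cO_{X'}=\cO_{Y}$ and $g\colon Y\to X$ finite surjective; applying the preceding case to $h'$ reduces the problem to proving the converse for the finite surjective $g$. Second, excellence of $S$ makes the normalization $\tilde Y\to Y$ finite (and surjective), and combined with the already-proved forward direction this lets me replace $Y$ by $\tilde Y$ and assume $g$ is finite surjective between normal schemes. The concluding step is classical and not really the content of this paper: one can pass to a Galois closure of $K(Y)/K(X)$ — normal and finite over $X$ by excellence — remain semiample after pulling back by the forward direction, and average sections of sufficiently divisible multiples of the pulled-back line bundle over the Galois group to manufacture $G$-invariant sections that descend to $X$; equivalently, one may invoke the norm map on $g_{*}\cO_{Y}^{*}$, available because $X$ is normal.
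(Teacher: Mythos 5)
The paper offers no proof of this lemma: it is imported verbatim from \cite[Lemma 2.11]{ct17}, so there is nothing internal to compare against. Your reconstruction is essentially the standard argument (and essentially that of Cascini--Tanaka): the forward direction by factoring through the counit of $(\pi h)^*\dashv(\pi h)_*$, the case $h_*\cO_{X'}=\cO_X$ by the projection formula together with the observation that this hypothesis forces surjectivity, and the normal case by Stein factorisation followed by descent along a finite surjective morphism onto a normal scheme. All of these steps are sound.

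One caveat about the final step. Literal averaging $\tfrac{1}{|G|}\sum_{g\in G}g^*s$ over a Galois group is not available in the generality of the lemma: residue characteristics may be positive and divide $|G|$, and when $X$ is an $\bF_p$-scheme the extension $K(Y)/K(X)$ need not be separable, so a Galois closure need not even exist. The second alternative you mention --- the norm, i.e.\ the product of conjugates --- is the one that actually works and needs no separability: after reducing to $Y$ integral dominating the normal integral $X$, the sheaf $g_*\cO_Y$ is torsion-free, hence locally free away from a closed subset of codimension at least two, so the norm of a section of $g^*L^{\otimes m}$ is defined there as a section of $L^{\otimes m\deg g}$ and extends over all of $X$ by normality; it vanishes at $x$ exactly when $s$ vanishes somewhere on $g^{-1}(x)$, and a further multiple of $L$ is needed to produce, for each $x$, a single section nonvanishing on the whole finite fibre $g^{-1}(x)$ (the residue field may be finite, so a general linear combination does not suffice). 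With that variant chosen, the proposal is correct.
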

\begin{lemma}[{\cite[Lemma 2.12]{ct17}}] \label{lem:semiample-under-faithfully-flat-cover} Let $X$ be a proper scheme over a Noetherian base scheme $S$ and let $L$ be a line bundle on $X$. Let $g \colon S' \to S$ be a morphism of Noetherian schemes. If $L$ is semiample over $S$, then its base change to $X \times_{S} S'$ is semiample over $S'$. Moreover, the converse is true if $g$ is faithfully flat.
\end{lemma}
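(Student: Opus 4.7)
The lemma asserts two compatibility properties of relative base point freeness under base change, and my plan is to reduce both directions to the behaviour of the adjunction map under pullback. By definition, $L$ being relatively semiample over $S$ means that for some integer $m > 0$ the adjunction map
\[
\pi^*\pi_*L^{\otimes m} \to L^{\otimes m}
\]
is surjective, where $\pi \colon X \to S$ is the structure morphism. I would set up the Cartesian square with $\pi' \colon X' \to S'$ and $g_X \colon X' \to X$ where $X' = X \times_S S'$ and $L' := g_X^*L$, and track this surjection through base change.

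For the forward direction, I would use that pulling back a surjection of sheaves along an arbitrary morphism preserves surjectivity. Applying $g_X^*$ to the adjunction map above and using $g_X^*\pi^* = (\pi')^* g^*$, I obtain a surjection
\[
(\pi')^* g^*\pi_*L^{\otimes m} \to L'^{\otimes m}.
\]
The canonical base change map $g^*\pi_*L^{\otimes m} \to \pi'_*L'^{\otimes m}$ then factors this through $(\pi')^*\pi'_*L'^{\otimes m} \to L'^{\otimes m}$, and the surjectivity of the composition forces the second arrow to be surjective as well. This shows $L'^{\otimes m}$ is relatively base point free over $S'$ with the same $m$, so $L'$ is relatively semiample.

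For the converse, the faithful flatness of $g$ (which is preserved by base change to give $g_X$) provides two stronger inputs. First, flat base change in the Noetherian coherent setting upgrades the canonical map $g^*\pi_*L^{\otimes m} \to \pi'_*L'^{\otimes m}$ to an isomorphism, so the adjunction map for $L'^{\otimes m}$ over $S'$ is precisely the pullback along $g_X$ of the adjunction map for $L^{\otimes m}$ over $S$. Second, since faithfully flat morphisms both preserve and reflect surjectivity of maps of sheaves, the assumed surjectivity of the pulled-back adjunction implies that of the original. Hence $L^{\otimes m}$ is relatively base point free over $S$, so $L$ is relatively semiample.

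I do not anticipate a significant obstacle: the argument is essentially formal once the relevant adjunction squares are laid out. The only points requiring care are the compatibility of the base change map with the adjunction (which I would verify by chasing the universal properties of $\pi_*$ and $\pi'_*$) and the invocation of flat base change for pushforwards in the Noetherian coherent setting, both of which are standard.
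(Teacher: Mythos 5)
Your argument is correct and complete: both directions follow from the compatibility of the adjunction map $\pi^*\pi_*L^{\otimes m} \to L^{\otimes m}$ with the base change map, using right-exactness of pullback for the forward direction and flat base change plus the fact that faithfully flat pullback reflects surjectivity for the converse. The paper itself gives no proof, citing the lemma from Cascini--Tanaka, and your argument is essentially the standard one found there.
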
 


The following is a consequence of Gabber's alteration theorem for quasi-excellent schemes. 
\begin{theorem}[{\cite[Theorem 2.30]{ct17}}] \label{thm:regular-alteration} Let $X$ be a normal quasi-excellent scheme. Then there exist a sequence of morphisms $X_m \to X_{m-1} \to \ldots \to X_0$ such that $X_m$ is regular, $X_0=X$, and each map is \'etale surjective, or proper, surjective, and generically finite. 
\end{theorem}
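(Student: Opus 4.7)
The plan is to reduce the statement to Gabber's theorem on regular alterations for Noetherian quasi-excellent schemes, which is explicitly signalled by the sentence preceding the theorem. Recall that a proper, surjective, generically finite morphism is what one calls an \emph{alteration}; Gabber's theorem (as developed in the Illusie--Temkin exposition) says that any reduced Noetherian quasi-excellent scheme admits an alteration by a regular scheme. Once this black box is available, the conclusion with a tower of maps comes essentially for free.

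Concretely, I would proceed as follows. First, since $X$ is normal and quasi-excellent, it is in particular reduced and Noetherian, and its irreducible components are disjoint; so Gabber's theorem applies to each component (and then to their disjoint union) and produces a single proper surjective generically finite morphism $X_1 \to X$ with $X_1$ regular. This already realises the statement with $m = 1$, the single map being of the second allowed type (proper, surjective, generically finite), and no \'etale step is required in this minimal version. The allowance in the statement for \'etale surjective morphisms is an auxiliary convenience: Gabber's construction of the alteration itself proceeds in stages (typically blow-ups along carefully chosen centres and \'etale/Galois localisations used to control residue field extensions and inseparability), so recording the full tower of intermediate maps used in the construction gives a sequence of the stated form with each individual arrow being either \'etale surjective or an alteration.

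The main obstacle is of course Gabber's theorem itself, whose proof is highly nontrivial and well outside the scope of this paper; for that reason I would not attempt to reprove it. Instead I would simply quote \cite[Theorem~2.30]{ct17} (which is the precise packaging of Gabber's result needed here and is exactly the way Cascini--Tanaka use it) together with the references to Illusie--Temkin's volume contained therein. No further argument is needed beyond this citation and the trivial observation that the output of Gabber's theorem fits into a tower of the asserted shape.
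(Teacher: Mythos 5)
The paper gives no proof of this statement at all: it is imported verbatim from Cascini--Tanaka, so your bottom line (quote \cite[Theorem~2.30]{ct17} as a black box) coincides with what the paper does. However, the explanatory gloss you add contains a genuine mathematical error. For a general Noetherian quasi-excellent scheme, Gabber's theorem does \emph{not} produce a single regular alteration. What Gabber proves (weak local uniformization, Exp.~VII of the Illusie--Laszlo--Orgogozo volume \emph{Travaux de Gabber}) is that $X$ admits a covering family \emph{for the alteration topology} by regular schemes; coverings in that topology are built from towers of \'etale surjective maps and proper surjective generically finite maps, which is exactly why the theorem is stated as such a tower rather than as a single alteration. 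The single-regular-alteration statement you invoke is de Jong's theorem, which is available for schemes of finite type over a field or over an excellent DVR, not for arbitrary quasi-excellent schemes. Consequently your ``minimal version with $m=1$'' does not follow from the cited black box, and the \'etale steps are not merely an ``auxiliary convenience'' recording intermediate stages of a construction --- they are forced by the form of Gabber's result. If you drop the $m=1$ claim and keep only the citation, the argument matches the paper.
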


The following result allows for descending line bundles when the fibres are equidimensional, the source is normal integral, and the base is $\bQ$-factorial (in fact, in applications we will render the base regular by ways of Theorem \ref{thm:regular-alteration}). Compare Lemma \ref{lem:descend-equidimensional-base-qfactorial} with the conjunction of \cite[Tag 0EXF and 0EXG]{stacks-project} which assuming flatness, additionally require the line bundle to be torsion on fibres over all codimension one points, but do not require the source to be normal integral nor the base to be $\bQ$-factorial.
\begin{lemma}[{\cite[Lemma 2.17]{ct17}}] \label{lem:descend-equidimensional-base-qfactorial} Let $X$ be an integral normal scheme admitting a proper morphism $\pi \colon X \to S$ to an integral normal excellent scheme $S$ such that $\pi_*\cO_X = \cO_S$. Suppose that $\pi$ is equidimensional and $S$ is $\bQ$-factorial. Let $L$ be a relatively nef line bundle on $X$ such that $L|_{X_{\eta}} \sim_{\bQ} 0$ where $X_{\eta}$ is the fibre over the generic point $\eta \in S$. 

Then $L$ is relatively semiample. In fact, there exists a line bundle $M$ on $S$ and $m \in \bN$ such that $L^m \simeq \pi^*M$.
\end{lemma}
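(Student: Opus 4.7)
The plan is to show that after replacing $L$ by a sufficiently divisible power, $L$ descends to a line bundle on $S$; relative semiampleness will then follow automatically since the pullback of a line bundle on $S$ is tautologically relatively globally generated. Since $L|_{X_\eta} \Lq 0$, after passing to a power we may assume $L|_{X_\eta} \simeq \cO_{X_\eta}$. Writing $L \simeq \cO_X(D)$ for a Cartier divisor $D$ and noting that the generic fibre $X_\eta$ is integral with $K(X_\eta) = K(X)$ (as $\pi$ is dominant and $X$ is integral), the triviality of $L$ on $X_\eta$ yields $D|_{X_\eta} = \mathrm{div}_{X_\eta}(f)$ for some $f \in K(X)^*$. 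Hence $D' \coloneq D - \mathrm{div}_X(f)$ is a Cartier divisor linearly equivalent to $D$ all of whose prime components are \emph{vertical} over $S$.

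Equidimensionality forces every vertical prime divisor $E \subseteq X$ to map onto a prime divisor of $S$: writing $d$ for the relative dimension of $\pi$, a dimension count using that the generic fibre of $\pi|_E \colon E \to \pi(E)$ is contained in a $d$-dimensional fibre of $\pi$ gives $\dim \pi(E) \geq \dim X - 1 - d = \dim S - 1$, and equality holds since $\pi(E) \subsetneq S$. We can therefore write $D' = \sum_F \sum_j b_{F,j} E_{F,j}$, where $F$ ranges over the finitely many prime divisors of $S$ lying in $\pi(\Supp(D'))$ and $\{E_{F,j}\}_j$ are the components of $\pi^{-1}(F)$. Since $S$ is $\bQ$-factorial, each $F$ is $\bQ$-Cartier, so $\pi^*F = \sum_j m_{F,j} E_{F,j}$ is a well-defined $\bQ$-Cartier divisor on $X$ with positive rational multiplicities $m_{F,j}$.

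The crux is to show, using relative nefness of $L$, that for each fixed $F$ the ratios $b_{F,j}/m_{F,j}$ are independent of $j$; this is a relative form of Zariski's lemma. Localising $S$ at the generic point of $F$ puts us in the setting of a proper morphism over $\Spec R$ for a discrete valuation ring $R$, with $\pi^*F$ the class of the principal special fibre and $D'$ supported on it. For any complete curve $C$ on $X$ one has $\pi^*F \cdot C = 0$ (principality) and $D' \cdot C \geq 0$ (nefness), so the identity
\[
0 \;=\; \pi^*F \cdot D' \cdot H_1 \cdots H_{\dim X - 2} \;=\; \sum_j m_{F,j}\bigl(E_{F,j} \cdot D' \cdot H_1 \cdots H_{\dim X - 2}\bigr)
\]
obtained by pairing with general complete intersections $H_i$ forces every summand to vanish. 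The classical Zariski's lemma (the intersection form on the components of a degenerate fibre of a proper morphism to a DVR is negative semidefinite with kernel generated by the total fibre class) then yields $D' = c_F \pi^*F$ in $\bQ$-Cartier classes. The main technical obstacle is the reduction from arbitrary dimension to the surface case, which requires a suitable moving argument for the $H_i$ (straightforward when $X$ is projective over $S$, but in general demanding more care).

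Once this proportionality is established, setting $c_F \coloneq b_{F,j}/m_{F,j}$ and $M \coloneq \sum_F c_F F$ gives $D' = \pi^*M$ as $\bQ$-Cartier divisors on $X$. Using $\bQ$-factoriality of $S$, some positive multiple $kM$ is Cartier, whence $L^k \simeq \cO_X(kD') \simeq \pi^*\cO_S(kM)$, proving both statements of the lemma.
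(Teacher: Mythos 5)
The paper does not actually prove this lemma --- it is imported verbatim from Cascini--Tanaka as \cite[Lemma 2.17]{ct17} --- so there is no in-paper proof to compare against; I will judge your argument on its own terms. Your overall strategy (kill the horizontal part of $D$ by a principal divisor using $K(X_\eta)=K(X)$, use equidimensionality to see that every vertical prime divisor dominates a prime divisor of $S$, use $\bQ$-factoriality to form $\pi^*F$, and reduce the whole lemma to the proportionality $b_{F,j}=c_F\, m_{F,j}$ after localising $S$ at the generic point of $F$) is the standard and correct skeleton, and the final assembly from the proportionality statement is fine. The problem is that the proportionality step, which is the entire content of the lemma, is exactly where your sketch has a genuine gap, in two places. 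First, as you concede, the cycles $H_1\cdots H_{\dim X-2}$ require ample divisors, which need not exist since $\pi$ is only assumed proper; this is not a routine ``moving argument'' to supply. Second, and more seriously, even in the projective case Zariski's lemma on the surface $Y=H_1\cap\cdots\cap H_{d-1}$ only identifies the kernel of the intersection form with the span of the fibre class when the special fibre $Y_0$ of $Y\to\Spec R$ is \emph{connected}. Connectedness of the special fibre $X_0$ of $X_R\to\Spec R$ (which does hold, by $\pi_*\cO_X=\cO_S$) does not pass to general complete-intersection sections: if two components $E_{F,i}$, $E_{F,j}$ of $X_0$ meet only in codimension $\geq 2$ inside $X_0$, their traces on $Y$ can land in different connected components of $Y_0$, and your argument then only yields that $b_{F,j}/m_{F,j}$ is constant on each connected component of the dual graph of $Y_0$, with a priori different constants on different components. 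Nothing in the proposal rules this out, so the conclusion $D'=c_F\,\pi^*F$ over $\Spec R$ is not established.

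Both defects are repaired by replacing the surface reduction with the usual ``maximal coefficient plus connectedness'' argument, which needs neither projectivity nor Zariski's lemma. Over $R=\cO_{S,\eta_F}$ the fibre $X_0=\mathrm{div}(\pi^\#t)=\sum_j m_jE_j$ is principal, hence numerically trivial on every complete curve. Set $c=\max_j b_j/m_j$ and $G=c\,X_0-D'$; this is an effective $\bQ$-Cartier divisor supported on $X_0$, at least one component of $X_0$ has coefficient zero in $G$, and $-G$ is numerically equivalent to $D'$, hence nef on curves in $X_0$. If $G\neq 0$, connectedness of $X_0$ produces a component $E_k\not\subseteq\Supp(G)$ with $E_k\cap\Supp(G)\neq\emptyset$, and a complete irreducible curve $C\subseteq E_k$ meeting $\Supp(G)$ but not contained in it (any two closed points of the proper irreducible $E_k$ lie on a connected curve); then $0<G\cdot C=-D'\cdot C\leq 0$, a contradiction. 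Hence $G=0$ and $b_j=c\,m_j$ for all $j$, which is the proportionality you need; the rest of your write-up then goes through unchanged.
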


Last, in the case of contractions, descend up to a multiple for a relatively numerically trivial line bundle is equivalent to relative semiampleness.
\begin{lemma}[{cf.\ \cite[Lemma 2.16]{ct17}}] \label{lem:descend-relatively-torsion} Let $\pi \colon X \to S$ be a proper morphism of Noetherian schemes such that $\pi_* \cO_X = \cO_S$. Let $L$ be a relatively numerically trivial line bundle on $X$. Then $L$ is relatively semiample if and only if it is relatively torsion if and only if $L^m \simeq f^*M$ for some line bundle $M$ on $S$ and a natural number $m$.
\end{lemma}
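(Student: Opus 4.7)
The three conditions in the lemma amount to two genuinely distinct assertions: under the hypothesis $\pi_{*}\cO_{X}=\cO_{S}$ the map $\pi^{*}\colon \Pic(S)\to \Pic(X)$ is injective (by the projection formula, since $\pi_{*}\pi^{*}M\simeq M$), so ``relatively torsion'' is most naturally read as ``torsion in $\Pic(X)/\pi^{*}\Pic(S)$'', which is literally the condition $L^{m}\simeq \pi^{*}M$. Thus my plan is to prove only the equivalence of relative semiampleness and the condition $L^{m}\simeq \pi^{*}M$; the equivalence with the middle condition is then a reformulation.

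The direction ``$L^{m}\simeq \pi^{*}M \Rightarrow$ semiample'' is essentially a one-liner. Given $L^{m}\simeq\pi^{*}M$, the projection formula and $\pi_{*}\cO_{X}=\cO_{S}$ yield $\pi_{*}L^{m}\simeq M$; the evaluation map $\pi^{*}\pi_{*}L^{m}\to L^{m}$ then becomes the identity $\pi^{*}M\to\pi^{*}M$, so $L^{m}$ is relatively base point free.

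For the converse, I would choose $n$ such that $L^{n}$ is relatively base point free and form the induced $S$-morphism $\phi\colon X\to Y\subseteq \bP_{S}(\pi_{*}L^{n})$, where $Y$ denotes the scheme-theoretic image, so that $\phi^{*}\cO_{Y}(1)\simeq L^{n}$. The main task is to show $Y\to S$ is an isomorphism. To see that it is quasi-finite, take any curve $C\subseteq Y_{s}$ in a fibre, lift it to a curve $C'\subseteq X_{s}$ mapping finitely onto $C$, and observe that relative numerical triviality of $L^{n}$ forces
\[
0=\deg_{C'}(L^{n})=\deg(C'/C)\cdot \deg_{C}\cO_{Y}(1).
\]
Hence $\deg_{C}\cO_{Y}(1)=0$; but $\cO_{Y}(1)$ is relatively ample on $Y/S$, so the fibres of $Y\to S$ are zero-dimensional, and thus $Y\to S$ is finite (proper plus quasi-finite).

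To conclude, I would factor $\phi$ through its Stein factorization $X\xrightarrow{\phi'}Y'\xrightarrow{g}Y$ with $\phi'_{*}\cO_{X}=\cO_{Y'}$ and $g$ finite. Since $Y\to S$ is finite, so is $Y'\to S$, and pushing $\cO_{X}$ forward in two different ways gives $(Y'\to S)_{*}\cO_{Y'}=\pi_{*}\cO_{X}=\cO_{S}$; for a finite morphism this forces $Y'\to S$ to be an isomorphism. Therefore $L^{n}\simeq \phi^{*}\cO_{Y}(1)\simeq \pi^{*}M$ for $M$ the pullback of $g^{*}\cO_{Y}(1)$ along the isomorphism $S\xrightarrow{\simeq}Y'$. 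I do not anticipate any serious obstacle: the only delicate point is the Stein-factorization bookkeeping at the end, and the argument is essentially the classical descent of a numerically trivial semiample line bundle through a Stein contraction.
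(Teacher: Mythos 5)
Your argument is correct, but it takes a genuinely different (and much more self-contained) route than the paper. The paper's own proof is a two-line remark: it declares everything automatic except the implication ``relatively torsion $\Rightarrow$ $L^m \simeq \pi^*M$'', and disposes of that by noting that local triviality of $L^m$ over $S$ together with $\pi_*\cO_X=\cO_S$ makes $\pi_*L^m$ a line bundle with $\pi^*\pi_*L^m \to L^m$ an isomorphism; the implication ``semiample $\Rightarrow$ torsion'' is implicitly deferred to the cited Lemma 2.16 of Cascini--Tanaka. You instead prove ``semiample $\Rightarrow$ $L^n\simeq\pi^*M$'' from scratch via the morphism to $\bP_S(\pi_*L^n)$, the degree computation showing the image is finite over $S$, and the Stein factorisation; this is the classical argument and it is sound, with the payoff that nothing is outsourced. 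One caveat: your reading of ``relatively torsion'' as torsion in $\Pic(X)/\pi^*\Pic(S)$ is \emph{not} the paper's definition --- the paper defines it as local triviality of $L^m$ over $S$, and the point it singles out as non-automatic is precisely that this local condition globalises to $L^m\simeq\pi^*M$. Your proof still covers the full statement, because the paper's middle condition sits trivially between the two outer ones ($L^m\simeq\pi^*M$ implies local triviality since $M$ is locally trivial, and local triviality implies relative base-point-freeness, which is local on $S$), but you should say this explicitly rather than redefine the middle condition away.
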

\noindent We say that $L$ is \emph{relatively torsion} if and only if $L^m$ is relatively trivial for some $m \in \bN$, that is, for every $s \in S$, there exists an open neigbhourhood $s \in U \subseteq S$ such that $L^m|_{\pi^{-1}(U)}$ is trivial.
\begin{proof}
The only part which is not automatic is to show that if $L^m$ is relatively trivial for some $m\in \bN$, then it descends to $S$. In this case $\pi_*L^m$ is a line bundle as $\pi_*\cO_X = \cO_S$. We have that $\pi^*\pi_*L^m \to L^m$ is an isomorphism.
\end{proof}


\subsection{Mixed characteristic Keel's theorem}
For a relatively nef line bundle $L$ on a proper scheme $X$ over a Noetherian base scheme $S$, we define $\mathbb{E}(L)$ to be the union of all closed integral subschemes $V \subseteq X$ such that $L|_V$ is not relatively big. 
\begin{theorem}[{\cite[Theorem 1.1]{witaszek2020keels}}] \label{theorem:mixed-char-Keel}  Let $L$ be a nef line bundle on a scheme $X$ projective over an excellent base scheme $S$. Then $L$ is semiample over $S$ if and only if both $L|_{\mathbb{E}(L)}$ and $L|_{X_{\bQ}}$ are so.
\end{theorem}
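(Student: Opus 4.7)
The forward direction is immediate from Lemma \ref{lem:semiample-under-pullback}. For the converse, assume $L|_{\mathbb{E}(L)}$ and $L|_{X_{\bQ}}$ are both semiample. My strategy is in two steps: first construct a proper morphism $f \colon X \to Y$ to an algebraic space over $S$ realising $L$ as EWM, and then descend a power of $L$ to an ample line bundle $M$ on $Y$, which yields semiampleness of $L$ by pullback.

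For the construction of $f$, the semiampleness of $L|_{\mathbb{E}(L)}$ produces a contraction $g\colon \mathbb{E}(L) \to Z$ with $L|_{\mathbb{E}(L)} \simeq g^*A$ for some ample $A$ on $Z$, and outside $\mathbb{E}(L)$ the bundle $L$ is big along every integral subscheme, so $f$ should be birational there. Fibrewise: over $X_{\bQ}$ the assumption provides a semiample fibration directly, while over every residue field of positive characteristic, classical Keel's theorem (applied using $\mathbb{E}(L|_{X_s}) \subseteq \mathbb{E}(L) \cap X_s$ and the semiampleness of $L|_{\mathbb{E}(L)}$ there) gives fibrewise semiampleness of $L|_{X_s}$. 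The main obstacle is gluing these fibrewise and generic pictures into a single global morphism, which is the genuinely mixed characteristic content of the statement.

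I would carry out this gluing by first reducing to $X$ normal and integral via the mixed characteristic Koll\'ar quotient theorem (Theorem \ref{thm:quotients}) applied to partial normalisations, then spreading the characteristic-zero semiample fibration over a dense open of $S$, and finally patching it with the Frobenius-based positive characteristic contractions using the mixed characteristic descent statements for the Picard groupoid encapsulated in diagram~(\ref{diagram:Witaszek-thickening}) (and ultimately the h-stack property from Theorem \ref{thm:fibre-of-Pic-is-h-sheaf}). Once $f\colon X \to Y$ is constructed, every positive-dimensional integral fibre of $f$ lies in $\mathbb{E}(L)$ by the EWM property, so $L$ is numerically trivial on fibres; Lemma \ref{lem:descend-relatively-torsion} applied relative to $f$ then yields $L^{\otimes m} \simeq f^*M$ for some line bundle $M$ on $Y$, and ampleness of $M$ follows by testing on curves, using that any proper curve in $Y$ lifts to a curve in $X$ not contained in a fibre of $f$, along which $L$ has positive degree.
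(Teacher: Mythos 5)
You should first note that this paper does not actually prove Theorem \ref{theorem:mixed-char-Keel}: it imports it wholesale from \cite{witaszek2020keels}, so there is no internal proof to match, and your plan has to stand on its own. As a standalone argument it has a genuine gap exactly where you flag "the main obstacle". The only nontrivial case is when $L$ is relatively big on each component (any component on which $L$ is not relatively big lies inside $\mathbb{E}(L)$ and is handled by hypothesis), and in that case the fibrewise contractions you produce over $X_{\bQ}$ and over each positive-characteristic $s$ are birational; gluing them into a single proper morphism $f\colon X\to Y$ is precisely the hard content of the theorem, and the tools you list (Theorem \ref{thm:quotients}, diagram~(\ref{diagram:Witaszek-thickening}), Theorem \ref{thm:fibre-of-Pic-is-h-sheaf}) do not assemble into such a construction without substantial new input. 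Worse, the route is circular relative to this paper's architecture: deducing global semiampleness from semiampleness of $L|_{X_{\bQ}}$ and of all $L|_{X_s}$ is Theorem \ref{thm:main-intro}, whose proof (Proposition \ref{prop:normal-case}) invokes Theorem \ref{theorem:mixed-char-Keel} to handle the big case. The argument in \cite{witaszek2020keels} instead proceeds by lifting sections of powers of $L$ from thickenings of $\mathbb{E}(L)$ to $X$ and by pushout/descent constructions of the kind appearing in Theorem \ref{thm:universal-homeomorphism-semiample}, not by first establishing fibrewise semiampleness.

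Two further steps would fail as written. First, "ampleness of $M$ follows by testing on curves" is false: a line bundle with positive degree on every curve need not be ample (Mumford's examples), and on a proper algebraic space one must verify the Nakai--Moishezon criterion $M^{\dim V}\cdot V>0$ for every integral closed $V\subseteq Y$, using that some component of $f^{-1}(V)$ dominating $V$ has the same dimension as $V$ and carries a big restriction of $L$; this is how the paper itself argues in the non-normal step of the main theorem. Second, Lemma \ref{lem:descend-relatively-torsion} requires $f_*\cO_X=\cO_Y$ and relative numerical triviality of $L$ \emph{over $Y$}, together with an algebraic-space variant as in Proposition \ref{prop:relatively-trivial-case-algebraic-spaces}; "numerically trivial on fibres of $f$" alone, without a Stein factorisation and without knowing $Y$ is excellent (or at least that the descent statement applies to it), does not yield $L^{\otimes m}\simeq f^*M$.
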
 

The following result is vital in this article. It can be proven analogously to \cite[Theorem 1.10]{witaszek2020keels}; we present a slightly different variant thereof. The reader is referred to \cite[Subsection 2.4]{witaszek2020keels} for the definition and properties of topological and geometric pushouts by universal homeomorphisms. In what follows, we consider a category of pairs $(X,L_X)$ consisting of a scheme with a line bundle $L_X$ on it, and we denote by $f \colon (X,L_X) \to (Y,L_Y)$ a data of a morphism $f \colon X \to Y$ together with an isomorphism $f^*L_Y \simeq L_X$. 



\begin{theorem} \label{thm:universal-homeomorphism-semiample} Let $L$ be a nef line bundle on a scheme $X$ proper over a Noetherian base scheme $S$. Let $f \colon Y \to X$ be a finite universal homeomorphism. Then $L$ is semiample (or EWM) if and only if both $f^*L$ and $L|_{X_{\bQ}}$ are so.
\end{theorem}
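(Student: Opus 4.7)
My plan is to prove the forward implication as an immediate consequence of the cited lemmas and handle the converse by Noetherian induction on the underlying topological space $|X| = |Y|$, using Theorem \ref{theorem:mixed-char-Keel} as the main engine at each step. For the forward direction: if $L$ is semiample (resp.\ EWM), then so is $f^*L$ by Lemma \ref{lem:semiample-under-pullback}, and so is $L|_{X_\bQ}$ by Lemma \ref{lem:semiample-under-faithfully-flat-cover} (with the analogous base-change fact for EWM, applied to the flat morphism $S_\bQ \to S$). The interesting direction is the converse.

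For the converse, I would first reduce to the case $S$ affine, since both properties are local on $S$. The key structural observation is that a finite universal homeomorphism $f$ induces a bijection between integral closed subschemes of $Y$ and of $X$ which preserves dimensions, and by Lemma \ref{lem:pullback-of-big} it preserves relative bigness of nef line bundles along these subschemes. In particular $f^{-1}(\mathbb{E}(L)) = \mathbb{E}(f^*L)$ set-theoretically, and the induced restriction $\bar{f} \colon f^{-1}(\mathbb{E}(L)) \to \mathbb{E}(L)$ is again a finite universal homeomorphism satisfying the hypotheses of the theorem: its pullback of $L|_{\mathbb{E}(L)}$ is the semiample line bundle $f^*L|_{\mathbb{E}(f^*L)}$, while $L|_{\mathbb{E}(L)_\bQ}$ is the restriction of the semiample $L|_{X_\bQ}$. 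Noetherian induction then gives that $L|_{\mathbb{E}(L)}$ is semiample, and Theorem \ref{theorem:mixed-char-Keel} promotes this to semiampleness of $L$ on $X$.

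The main obstacle will be the degenerate base case $\mathbb{E}(L) = |X|$, which occurs precisely when $L$ itself fails to be relatively big and the induction does not decrease the underlying space. In that case I would replace $L$ by a sufficiently divisible power so that $f^*L$ and $L|_{X_\bQ}$ are both base point free, and then invoke the Cartesian square \eqref{diagram:Witaszek-thickening} of \cite[Theorem 1.7]{witaszek2020keels}: the compatible pair $(f^*L, L|_{X_\bQ})$, equipped with the canonical identification on $Y_\bQ$, represents a class in $\PicS(X) \otimes \bQ$ that must coincide with a power of $L$, and the two families of global sections can then be glued to produce a semiample structure on $X$ itself. A secondary technical point is that Theorem \ref{theorem:mixed-char-Keel} is stated under the projective-over-excellent hypothesis, whereas we assume only proper over Noetherian; this gap should be bridged by a standard Chow's lemma reduction or by passing to a regular alteration via Theorem \ref{thm:regular-alteration}. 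The EWM version of the statement will follow from the identical template, substituting the EWM analogue of Keel's mixed characteristic theorem at each inductive step and replacing the semiample-fibration gluing in the degenerate case by the corresponding algebraic-space quotient construction.
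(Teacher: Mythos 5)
Your forward direction and the set-theoretic identification $f^{-1}(\mathbb{E}(L)) = \mathbb{E}(f^*L)$ via Lemma \ref{lem:pullback-of-big} are fine, but the Noetherian induction you build on them only postpones the problem: the entire content of the theorem sits in your ``degenerate base case'' $\mathbb{E}(L) = X$ (equivalently, $L$ not relatively big on some component), and there your argument has a genuine gap. Invoking the Cartesian square \eqref{diagram:Witaszek-thickening} for the pair $(f^*L, L|_{X_{\bQ}})$ gives you only the tautology that this pair glues to (a power of) $L$ itself --- a line bundle you already have. It says nothing about global sections: the square is a statement about $\PicS$, i.e.\ about line bundles and their isomorphisms, not about $H^0$. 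Since $f_*\cO_Y \neq \cO_X$ for a general finite universal homeomorphism, sections of $f^*L^m$ do not descend to $X$, and sections of $L^m|_{X_{\bQ}}$ do not extend, so ``gluing the two families of global sections'' is precisely the step that has no mechanism behind it. (There are also secondary mismatches: Theorem \ref{theorem:mixed-char-Keel} requires $X$ projective over an \emph{excellent} base, while the theorem here is stated for proper over Noetherian, and descending semiampleness back along a Chow cover needs normality or a contraction, neither of which is available; and the EWM analogue of Keel's theorem is not among the quoted results.)

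The paper's proof avoids Keel's theorem entirely and instead attacks the degenerate case head-on, which is the idea you are missing. Let $g \colon Y \to Z$ be the map (or semiample fibration) associated to $f^*L$. One first constructs the topological pushout $Z'$ of $X \leftarrow Y \to Z$, proper over $S$ --- a nontrivial existence result from \cite{witaszek2020keels} whose hypothesis is verified using the contraction $h_{\bQ} \colon X_{\bQ} \to Z'_{\bQ}$ associated to $L|_{X_{\bQ}}$ (this already settles the EWM case). For semiampleness one then descends the \emph{ample} line bundle $A$ on $Z$ (with $g^*A \simeq f^*L$) to a line bundle $A'$ on $Z'$, using the Cartesian-up-to-$\bQ$ square for the Picard groupoids of the pushout square, with the descent datum completed by $A'_{\bQ}$ on $Z'_{\bQ}$. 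Since $Z \to Z'$ is finite, $A'$ is ample, and $h^*A' \simeq L^m$ exhibits $L$ as semiample: the sections of $L^m$ one needs are pulled back from $Z'$, not glued on $X$. So the Picard-groupoid descent is applied on the contraction target $Z'$, where a new line bundle genuinely has to be produced --- not on $X$, where it yields nothing. If you want to salvage your outline, the degenerate case must be replaced by this pushout-and-descent argument (or a reduction to it).
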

\begin{proof}
Provided that $f^*L$ and $L|_{X_{\bQ}}$ are semiample (or EWM), we show that $L$ is semiample (or EWM). We may assume that $S$ is affine. We start with the EWM case of the theorem. Let $g \colon Y \to Z$ be a map associated to $f^*L$. We claim that there exists a topological pushout $Z'$ of $X \leftarrow Y \to Z$ which is proper over $S$:
\begin{center}
\begin{tikzcd}
X \arrow{d}{h} & \arrow{d}{g} \arrow{l}{f} Y\\
Z' & \arrow{l}{r} Z.
\end{tikzcd}
\end{center}
To this end, it is enough to show that $X_{\bQ} \leftarrow Y_{\bQ} \to Z_{\bQ}$ admits a topological pushout, as then the claim will follow from \cite[Theorem 4.4 and Lemma 2.22]{witaszek2020keels}. Let $h_{\bQ} \colon X_{\bQ} \to Z'_{\bQ}$ be a contraction associated to $L|_{X_{\bQ}}$. The induced map $Z_{\bQ} \to Z'_{\bQ}$ is proper (\cite[Tag 04NX]{stacks-project}) and a bijection on geometric points, hence a finite universal homeomorphism (\cite[Tag 0A4X]{stacks-project}). Thus, $Z'_{\bQ}$ is a topological pushout of $X_{\bQ} \leftarrow Y_{\bQ} \to Z_{\bQ}$. Now, the induced map $h \colon X \to Z'$ is one associated to $L$.

We move to the semiample case. We may assume that $h$ constructed above is a contraction. We claim that up to replacing $L$ by a multiple, the above diagram can be extended to a Cartesian diagram of pairs:
\begin{center}
\begin{tikzcd}
(X,L) \arrow{d}{h} & \arrow{d}{g} \arrow{l}{f} (Y,f^*L)\\
(Z',A') & \arrow{l}{r} (Z,A),
\end{tikzcd}
\end{center}
where $A$ is an ample line bundle induced by $g \colon Y \to Z$; explicitly, there exists an isomorphism $\sigma_Y \colon g^*A \xrightarrow{\simeq} f^*L$. This claim immediately concludes the proof of the theorem.  Indeed, since $r$ is finite (it is a finite universal homeomorphism) and $r^*A' \simeq A$, we get that $A'$ is ample (\cite[Tag 0GFB]{stacks-project}) and hence semiample (\cite[Tag 01VS]{stacks-project}). Since $h^*A' \simeq L$, we get that $L$ is semiample as well.

To show the claim, we use the fact that the following diagram 
\begin{center}
\begin{tikzcd}
\PicS(Z')  \arrow{d}  \arrow{r}  &  \PicS(X) \times_{\PicS_Y}  \PicS(Z)  \arrow{d} \\
\PicS(Z'_{\bQ})  \arrow{r} & \PicS(X_\bQ) \times_{\PicS(Y_{\bQ})} \PicS(Z_{\bQ}), 
\end{tikzcd}
\end{center}
is Cartesian up to tensoring by $\bQ$ (as $\PicS$ is a stack in the \'etale topology, we may assume that $Z'$ is a scheme, and so this is \cite[Corollary 3.7]{witaszek2020keels} except for the fact that $S$ therein is assumed to be defined over $\mathbb{Z}_{(p)}$; this is irrelevant in the proof though, for example in view of Theorem \ref{thm:Pic-under-uh}). Let $A'_{\bQ}$ be an ample line bundle on $Z'_{\bQ}$ induced by the contraction $h_{\bQ} \colon X_{\bQ} \to Z'_{\bQ}$. Explicitly, there exists an isomorphism $\sigma_{X_{\bQ}} \colon (h_{\bQ})^* A'_{\bQ} \xrightarrow{\simeq} L|_{X_{\bQ}}$. Further, denote the canonical isomorphism $r^*_{\bQ} A'_{\bQ} \xrightarrow{\simeq} A_{\bQ}$ by $\sigma_{Z,\bQ}$. Then, we get the following element
\[
(A'_{\bQ}, (L, A, \sigma_Y), (\sigma_{X_{\bQ}}, \sigma_{Z,\bQ})) 
\]   
of $\PicS(Z'_{\bQ}) \times_{\PicS(X_\bQ) \times_{\PicS(Y_{\bQ})} \PicS(Z_{\bQ})} \PicS(X) \times_{\PicS(Y)}  \PicS(Z)$, and so up to replacing the line bundles by their multiples, the claim is proven.
\end{proof}

\begin{corollary} \label{cor:connected-fibres-semiample} Let $L$ be a nef line bundle on a scheme $X$ proper over a Noetherian base scheme $S$. Let $f \colon Y \to X$ be a proper surjective morphism with geometrically connected fibres. Then $L$ is semiample if and only if both $f^*L$ and $L|_{X_{\bQ}}$ are so.
\end{corollary}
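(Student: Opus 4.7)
The plan is to reduce this to the finite universal homeomorphism case, Theorem~\ref{thm:universal-homeomorphism-semiample}, via the Stein factorisation. The forward direction is automatic: $f^*L$ is semiample by Lemma~\ref{lem:semiample-under-pullback}, and $L|_{X_{\bQ}}$ is semiample by Lemma~\ref{lem:semiample-under-faithfully-flat-cover}, so I would concentrate on the converse implication.

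For the converse, I would factor $f = h \circ g$ as its Stein factorisation, with $g \colon Y \to Z$ a contraction (that is, $g_* \cO_Y = \cO_Z$) and $h \colon Z \to X$ finite. The central observation is that the geometric-connectivity of the fibres of $f$ forces $h$ to be a finite universal homeomorphism. Indeed, at any geometric point $\bar x \in X$, flat base change for the Stein factorisation identifies the fibre $h^{-1}(\bar x)$ with $\Spec H^0(Y_{\bar x}, \cO_{Y_{\bar x}})$; since $Y_{\bar x}$ is connected and proper over the algebraically closed field $k(\bar x)$, this global section ring is a local Artinian $k(\bar x)$-algebra with residue field $k(\bar x)$. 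Thus $h$ is finite, surjective, and bijective on geometric points with trivial residue field extensions, hence a finite universal homeomorphism.

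With that in hand, the corollary follows in a single step. The semiampleness of $f^*L = g^*h^*L$ together with $g_*\cO_Y = \cO_Z$ forces $h^*L$ to be semiample by the second half of Lemma~\ref{lem:semiample-under-pullback}. So $h^*L$ is semiample, $L|_{X_{\bQ}}$ is semiample by hypothesis, and $h$ is a finite universal homeomorphism; Theorem~\ref{thm:universal-homeomorphism-semiample} then yields that $L$ is semiample.

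The only step that requires genuine verification is the identification of $h$ as a finite universal homeomorphism; everything else is bookkeeping with the quoted preliminaries. I do not anticipate a serious obstacle, as this finite-part-of-Stein-is-a-universal-homeomorphism principle for morphisms with geometrically connected fibres is a standard ingredient, and the rest of the argument is a short formal manipulation.
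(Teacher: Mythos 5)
Your proposal is correct and is essentially the paper's own proof: the paper likewise reduces via Stein factorisation to Lemma \ref{lem:semiample-under-pullback} (to descend along the contraction part) and Theorem \ref{thm:universal-homeomorphism-semiample} (to descend along the finite universal homeomorphism), using the standard fact that geometric connectivity of the fibres makes the finite part of the Stein factorisation a universal homeomorphism. You have merely spelled out the details that the paper leaves implicit.
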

\begin{proof}
By Stein factorisation, this follows from Lemma \ref{lem:semiample-under-pullback} and Theorem \ref{thm:universal-homeomorphism-semiample}.
\end{proof}

\subsection{Quotients by finite equivalence relations}
\label{sub:quotients_by_finite_equivalence_relations}

We recall definitions and results on set theoretic finite equivalence relations following \cites{witaszek2020keels,kollar12}.

\begin{definition} Let $X$ be a separated algebraic space of finite type over a Noetherian base scheme $S$. A morphism $\sigma \colon E \to X \times_S X$ (equivalently $\sigma_1, \sigma_2 \colon E \rightrightarrows X$ over $S$) is a \emph{set theoretic equivalence relation} on $X$ over $S$ if for every geometric point $\Spec K \to S$ the map
\[
\sigma(K) \colon \Mor_S(\Spec K, E) \to \Mor_S(\Spec K, X) \times \Mor_S(\Spec K, X) 
\] 
yields an equivalence relation on $K$-points of $X$. We say that $\sigma \colon E \to X \times_S X$  is a \emph{finite equivalence relation} if $\sigma_i$ are finite.
\end{definition}
We refer to \cite[Definition 2]{kollar12} for another equivalent definition.

\begin{definition} Let $\sigma_1, \sigma_2 \colon E \rightrightarrows X$ be a set theoretic finite equivalence relation of separated algebraic spaces of finite type over a Noetherian base scheme $S$. We call $q \colon X \to Y$, for a separated algebraic space $Y$ of finite type over $S$, a \emph{categorical quotient}  if $q \circ \sigma_1 = q \circ \sigma_2$ and $q$ is universal with this property (in the category of separated algebraic spaces of finite type over $S$). We call $q$ a \emph{geometric quotient} if
\begin{itemize}
	\item it is a categorical quotient, 
	\item it is finite, and 
	\item for every geometric point $\Spec K \to S$, the fibres of $q_K \colon X_K(K) \to Y_K(K)$ are the $\sigma(E_K(K))$-equivalence classes of $X_K(K)$.
\end{itemize}
\end{definition}
The following result guarantees that quotients exist when $X$ is integral over the base.
\begin{proposition}[{\cite[Lemma 17]{kollar12}, \cite[Proposition 2.13]{witaszek2020keels}}] \label{prop:kollar_quotients_exist} Let $X$ be a separated algebraic space of finite type over a Noetherian base scheme $S$, let $Y$ be an algebraic space over $S$, let $\pi \colon X \to Y$ be an integral morphism over $S$, and let $E \rightrightarrows X$ be a finite, set theoretic, equivalence relation over $Y$. Then the geometric quotient $X/E$ exists as a separated algebraic space of finite type over $S$.
\end{proposition}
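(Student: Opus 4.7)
The plan is to reduce the integral case to Kollár's original theorem (the finite case, \cite[Lemma 17]{kollar12}) by approximating $\pi$ by finite morphisms, constructing geometric quotients at each finite stage, and assembling the answer as a (stable) inverse limit.

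First, the construction of a geometric quotient is Zariski-local on $Y$, so I would assume $Y=\Spec B$ is affine and Noetherian. Since $\pi$ is integral and separated, $X$ is then affine, say $X=\Spec A$ with $A$ integral over $B$; similarly $E=\Spec C$ is affine (as $\sigma_1$ is finite and $X$ is affine), and the equivalence relation data is encoded by the finite ring maps $\sigma_1^{\#},\sigma_2^{\#}\colon A\to C$.

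Next, write $A=\varinjlim_\lambda A_\lambda$ as the filtered colimit of its finitely generated $B$-subalgebras; by integrality each $A_\lambda$ is finite over $B$, so $X_\lambda\coloneq\Spec A_\lambda$ is finite over $Y$. For each $\lambda$, enlarge it so that $C$ contains a finitely generated $B$-subalgebra $C_\lambda$ accommodating both $\sigma_1^{\#}(A_\lambda)$ and $\sigma_2^{\#}(A_\lambda)$ and finite over $A_\lambda$ via either map. This yields a compatible system of finite maps $E_\lambda\rightrightarrows X_\lambda$ over $Y$, where $E_\lambda\coloneq\Spec C_\lambda$. A standard limit argument lets the reflexivity, symmetry, and transitivity axioms, which are encoded by morphisms $X\to E$, $E\to E$, and $E\times_X E\to E$ compatible with $\sigma_1^{\#},\sigma_2^{\#}$, descend to all sufficiently large $\lambda$, so that $E_\lambda\rightrightarrows X_\lambda$ is a finite set-theoretic equivalence relation over $Y$. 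Kollár's Lemma~17 then provides a geometric quotient $q_\lambda\colon X_\lambda\to Y_\lambda$, a finite map of separated algebraic spaces of finite type over $S$.

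By the universal property of geometric quotients, for $\lambda'\ge\lambda$ the transition maps $X_{\lambda'}\to X_\lambda$ induce compatible finite maps $Y_{\lambda'}\to Y_\lambda$. I would then define $X/E$ to be the limit of this system and verify: (i) the induced map $X\to X/E$ coequalizes $\sigma_1$ and $\sigma_2$; (ii) it is universal with this property among separated algebraic spaces of finite type over $S$; (iii) it is finite; and (iv) its geometric fibres are the equivalence classes of $E$. All four properties pass through the limit because equivalence classes are finite and stabilize at a finite stage.

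The principal technical obstacle is to ensure that $\varprojlim_\lambda Y_\lambda$ is itself a separated algebraic space of \emph{finite type} over $S$, rather than merely integral over one. The key input is that $X$ is of finite type over $S$ and surjects via a finite map onto each $Y_\lambda$; combined with noetherianity of $S$, this forces the inverse system $\{Y_\lambda\}$ to stabilize beyond some $\lambda_0$, reducing the inverse limit to the single finite-type stage $Y_{\lambda_0}$.
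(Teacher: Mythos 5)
The paper does not actually prove this proposition; it is imported verbatim from \cite[Lemma 17]{kollar12} (see also \cite[Proposition 2.13]{witaszek2020keels}), so there is no internal argument to compare yours against, and I will judge the proposal on its own terms. The first thing to say is that the entire reduction you build is unnecessary: since $X \to S$ is of finite type and $\pi$ factors the structure morphism, $\pi$ is locally of finite type by cancellation (\stacksproj{01T8}), and an integral morphism that is locally of finite type is finite (\stacksproj{01WJ}). Equivalently, in your affine picture $A$ is generated as an $\cO_S$-algebra by finitely many elements each integral over $B$, hence is a finite $B$-module, so your filtered system $A=\varinjlim_\lambda A_\lambda$ stabilizes at $A$ itself and the approximation apparatus collapses to a single stage. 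If you grant yourself the finite case as known input, the proposition follows in one line from this observation.

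The intermediate stages of your argument, as written, contain a genuine gap. A \emph{set-theoretic} equivalence relation in the sense used here is purely a condition on geometric points; it is not equipped with structure morphisms $X \to E$, $E \to E$, $E\times_X E \to E$, so there is nothing to ``descend to a finite stage'' by the standard limit argument for morphisms. What you would actually need is that the image of the relation $E(K)\subseteq X(K)\times X(K)$ under the surjection $X(K)\to X_\lambda(K)$ is again transitive, and this fails in general: if $X$ consists of four points with equivalence classes $\{1,2\}$ and $\{3,4\}$ and $A_\lambda\subseteq A$ glues the points $2$ and $3$, the image relation on $X_\lambda(K)$ relates $1$ to $\overline{23}$ and $\overline{23}$ to $4$ but not $1$ to $4$. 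Separately, the stabilization of $\{Y_\lambda\}$ does not follow from ``Noetherianity of $S$'' plus the existence of finite surjections from $X$ — increasing chains of subalgebras of a finitely generated algebra need not stabilize — it requires identifying $\Gamma(Y_\lambda)$ with the equalizer subalgebra $A'_\lambda=\mathrm{eq}(A_\lambda \rightrightarrows C_\lambda)$ and invoking Artin--Tate/Eakin--Nagata for the union $A'=\mathrm{eq}(A\rightrightarrows C)$, over which $A$ is finite. That equalizer description is in fact the substance of the cited proof: one defines $X/E$ as the relative $\Spec$ over $Y$ of the equalizer of $\pi_*\cO_X \rightrightarrows (\pi\circ\sigma_1)_*\cO_E$, obtains finite type over $S$ from Artin--Tate, and the real work is verifying that the geometric fibres of $X\to X/E$ are precisely the $E$-equivalence classes; none of this is visible in the proposal.
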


It was shown by Koll\'ar (\cite{kollar12}) that quotients by finite set theoretic equivalences always exist in positive characteristic. This was further generalised to mixed characteristic in \cite{witaszek2020keels}.

\begin{theorem}[{\cite[Theorem 1.4]{witaszek2020keels}}] \label{thm:quotients} Let $X$ be a separated algebraic space of finite type over an excellent base scheme $S$. Let $\sigma \colon E \rightrightarrows X$ be a finite, set theoretic equivalence relation and assume that the geometric quotient $X_{\bQ} / E_{\bQ}$ exists as a separated algebraic space of finite type over $S$. Then the geometric quotient $X / E$ exists as a separated algebraic space of finite type over $S$.
\end{theorem}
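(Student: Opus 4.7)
The plan is to combine Koll\'ar's original positive characteristic quotient theorem \cite{kollar12} with the given characteristic zero quotient $X_{\bQ}/E_{\bQ}$, using the pushout formalism along finite universal homeomorphisms developed in \cite[Section 2.4]{witaszek2020keels}; the strategy parallels the proof of Theorem \ref{thm:universal-homeomorphism-semiample}, where a mixed characteristic statement is obtained by pushing out a positive characteristic construction along its characteristic zero counterpart.

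First I would perform standard reductions. Using Proposition \ref{prop:kollar_quotients_exist} (which provides the quotient as soon as $X$ maps integrally to a candidate target) and the Koll\'ar-style gluing of quotients on affine charts, we may assume that $S$ is affine and $X$ is an affine Noetherian scheme. Replacing $X$ and $E$ by their reductions, we may assume both are reduced; only finitely many residue characteristics $p$ arise in $S$, so by induction it suffices to handle one prime $p$ at a time.

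Next, Koll\'ar's theorem \cite{kollar12} applied to $X_{\bF_p}$ with its induced equivalence relation $E_{\bF_p}$ produces the geometric quotient $Y_p := X_{\bF_p}/E_{\bF_p}$ as a separated algebraic space of finite type over $S_{\bF_p}$; this is where Frobenius-based invariants enter. On the other hand $Y_{\bQ} := X_{\bQ}/E_{\bQ}$ is given by hypothesis. I would then construct the desired quotient $Y$ as the topological pushout, in the sense of \cite[Lemma 2.22 and Theorem 4.4]{witaszek2020keels}, of the span
\begin{center}
\begin{tikzcd}
X_{\bQ} \arrow{r} \arrow{d} & X \arrow[dashed]{d} \\
Y_{\bQ} \arrow[dashed]{r} & Y,
\end{tikzcd}
\end{center}
with the positive characteristic data from Koll\'ar being used to show that such a pushout exists as a separated algebraic space of finite type over $S$. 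Concretely, the natural map $X \to Y$ should factor as a finite universal homeomorphism onto the Koll\'ar-type quotient on the positive characteristic fibres and restrict to the given contraction on $X_{\bQ}$, so the pushout-of-pairs technique (analogous to the final paragraph of the proof of Theorem \ref{thm:universal-homeomorphism-semiample}) yields the geometric structure.

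The main obstacle is to promote this topological pushout to a genuine geometric quotient: we need the resulting map $X \to Y$ to be finite, to realise the equivalence classes of $E$ on every geometric fibre, and to be universal among maps $X \to Y'$ with $Y' \in Sch/S$ coequalising $\sigma_1, \sigma_2$. Finiteness and the fibre condition should follow from the corresponding statements for $Y_p$ (via Koll\'ar) and for $Y_{\bQ}$ (by hypothesis), patched across characteristics via the pushout. The categorical universal property reduces, by the universal property of the topological pushout and faithful descent along the universal homeomorphism appearing in the construction, to the universal properties already known in pure characteristic; the delicate point is checking the compatibility of the algebraic space structures on $Y_p$ and $Y_{\bQ}$ at the interface, which is precisely what the machinery of \cite[Section 2.4]{witaszek2020keels} is designed to handle.
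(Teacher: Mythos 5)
Note first that the paper does not prove this statement: it is quoted, with proof, from \cite[Theorem 1.4]{witaszek2020keels}, so the comparison below is with the argument given there (whose shape is visible in how the present paper uses the same machinery in Theorem \ref{thm:universal-homeomorphism-semiample}).

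Your proposal has a genuine gap at its core. You construct $Y$ as a pushout of the span $Y_{\bQ} \leftarrow X_{\bQ} \to X$, with Koll\'ar's positive characteristic quotients $Y_p = X_{\bF_p}/E_{\bF_p}$ ``patched in'' across characteristics. But the pushout formalism of \cite[Section 2.4 and Theorem 4.4]{witaszek2020keels} applies to spans $X \leftarrow Y \to Z$ in which $Y \to X$ is a \emph{finite universal homeomorphism}; the characteristic-zero data enters only as the \emph{existence criterion} for such a pushout, not as a leg of the span. The map $X_{\bQ} \to X$ is a pro-open immersion, not finite, and a pushout of $Y_{\bQ} \leftarrow X_{\bQ} \to X$ will in general not exist as a finite type algebraic space (consider contracting $\bA^1_{\bQ}$ inside $\bA^1_{\bZ}$). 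More fundamentally, $X$ is not glued from $X_{\bQ}$ and the fibres $X_{\bF_p}$ along any closed interface: already $\Spec \bZ_p$ is not a colimit of $\Spec \bQ_p$ and $\Spec \bF_p$ in schemes, so there is no mechanism for assembling $Y_{\bQ}$ and the Koll\'ar quotients $Y_p$ into a space over $S$ — this is exactly the difficulty that makes the mixed characteristic statement nontrivial. (Your preliminary reduction to finitely many residue characteristics also fails for, e.g., $S = \Spec \bZ$.)

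The actual argument in \cite{witaszek2020keels} does not form the quotients of the fibres separately. It runs Koll\'ar's inductive gluing procedure (normalisation, induction on the number of irreducible components, quotients of the pieces by the restricted relations via Proposition \ref{prop:kollar_quotients_exist}) directly on $X$ in mixed characteristic; at each stage one must form a pushout of a span $X \leftarrow Y \to Z$ with $Y \to X$ a finite universal homeomorphism, and it is there — and only there — that the hypothesis on $X_{\bQ}/E_{\bQ}$ is used, via the criterion that such a pushout exists if and only if it exists after base change to $\bQ$. This is the same pattern as in the proof of Theorem \ref{thm:universal-homeomorphism-semiample} above: the span being pushed out lives entirely in mixed characteristic, and characteristic zero only certifies existence.
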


Last we discuss closures of relations. First, consider a subset $E \subseteq V \times V$, where $V$ is a set. We say that $\overline{E} \subseteq V \times V$ is the \emph{closure} of $E$ if it is the smallest equivalence relation containing $E$. 

\begin{lemma} \label{lem:closure-of-equivalence-relations-for-sets} Consider a subset $E \subseteq V \times V$, where $V$ is a set. Then the closure of $E$ is equal to $\bigcup_{i\geq 0} E_i$, where 
\begin{enumerate}
	\item $E_0 = E \cup \rho(E) \cup \Delta$, for $\Delta \subseteq V \times V$ being the diagonal and $\rho \colon V \times V \to V \times V$ being the involution sending $(v_1,v_2) \in V \times V$ to $(v_2,v_1)$,
	\item $E_i$ is constructed inductively, by setting $E_i = E_{i-1} \times_{V} E_{i-1} \subseteq V\times V$ where the maps $E_{i-1} \rightrightarrows V$ are given by projecting onto the second and the first factor, respectively.
\end{enumerate}
Moreover, if there exists a natural number $d\geq 2$ such that $|\pi_1^{-1}(v)| \leq d$ for every $v \in V$, where $\pi_1 \colon \overline{E} \to V$ is the projection onto the first factor, then $\overline{E} = E_{d-2}$.
\end{lemma}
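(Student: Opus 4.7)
The plan is to show both parts by thinking of elements of $E_i$ as pairs connected by walks of bounded length in the symmetric closure of $E$. Define, for $n \geq 0$, a \emph{walk of length $n$ from $a$ to $b$} to be a sequence $a = v_0, v_1, \ldots, v_n = b$ of elements of $V$ with $(v_{k-1}, v_k) \in E_0$ for every $1 \leq k \leq n$. Note that $E_0$ is symmetric and contains the diagonal by construction, so walks may be reversed and padded by repeating the endpoint; in particular, if a walk of length $n$ exists, then so does a walk of length $n'$ for every $n' \geq n$.

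First I would prove by induction on $i \geq 0$ that $E_i$ equals the set of pairs $(a,b)$ admitting a walk of length at most $2^i$. The case $i = 0$ is immediate. For the inductive step, an element of $E_i = E_{i-1} \times_V E_{i-1}$ is a pair $(a, c)$ together with a $b \in V$ such that $(a,b), (b,c) \in E_{i-1}$; by induction these give walks of length at most $2^{i-1}$ each, which concatenate to a walk from $a$ to $c$ of length at most $2^i$. Conversely, a walk from $a$ to $c$ of length at most $2^i$ can be split in the middle (after padding to length exactly $2^i$) into two walks of length at most $2^{i-1}$, yielding the required factorisation through some $b \in V$. A direct consequence is that $E_{i-1} \subseteq E_i$, that each $E_i$ is symmetric and reflexive, and that the union $\bigcup_i E_i$ is transitive: witnesses in $E_i$ and $E_j$ concatenate to a walk realising membership in $E_{\max(i,j)+1}$.

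Hence $\bigcup_i E_i$ is an equivalence relation containing $E$, so it contains $\overline{E}$. Conversely, any equivalence relation containing $E$ automatically contains $E_0$ (being reflexive and symmetric), and by an easy induction contains every $E_i$: if it contains $E_{i-1}$, then transitivity forces it to contain $E_i = E_{i-1} \times_V E_{i-1}$. This gives the first assertion $\overline{E} = \bigcup_i E_i$.

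For the moreover clause, fix $v \in V$ and let $[v]$ denote its equivalence class in $\overline{E}$; the hypothesis $|\pi_1^{-1}(v)| \leq d$ says exactly $|[v]| \leq d$. By the first part, any $w \in [v]$ is connected to $v$ by a walk in $E_0$; by deleting repeated vertices (using that $E_0$ is reflexive, so we can skip to a later visit of the same vertex), we may assume the walk visits each element of $[v]$ at most once, hence has length at most $|[v]| - 1 \leq d - 1$. Consequently $(v,w) \in E_i$ as soon as $2^i \geq d-1$. Since $2^{d-2} \geq d-1$ for every $d \geq 2$ (trivial induction), we conclude $\overline{E} = E_{d-2}$. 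The only delicate point is the walk-pruning step, but as $E_0$ contains the diagonal this is completely formal.
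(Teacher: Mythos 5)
Your proof is correct and follows essentially the same route as the paper: both arguments identify $\bigcup_{i\geq 0} E_i$ with the set of pairs joined by finite chains in $E_0$ and then bound the chain length by the class size $d$ using the distinctness of the visited vertices. The only difference is bookkeeping: you track the sharp fact that $E_i$ captures exactly the walks of length at most $2^i$ (so in fact $\overline{E}=E_i$ already once $2^i\ge d-1$), whereas the paper only needs the coarser containment that a chain of $k$ steps lands in $E_{k-1}$; both yield $\overline{E}=E_{d-2}$.
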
 
\begin{proof}
The first part (the construction of $\overline{E}$) is clear by definition of an equivalence relation: (1) guarantees reflexivity and symmetry, and (2) guarantees transitivity.

To show that $\overline{E} = E_{d-2}$, we need to argue that given distinct $v, v' \in \overline{E}$, the shortest sequence of elements $v_0, \ldots, v_k$ such that $v_0=v$, $v_k=v'$, and $(v_{i-1},v_{i}) \in E_0$ for $1 \leq i \leq k$, satisfies $k-1 \leq d-2$. This is clear, because $v_0, \ldots, v_k$ must be distinct, and so $(v_0,v_0), (v_0,v_1),\ldots, (v_0,v_k) \in \pi_1^{-1}(v_0) \in \overline{E}$.
\end{proof}


Now, let $X$ be a separated scheme of finite type over a Noetherian base scheme $S$ and let $E \subseteq X \times_S X$ be a closed subscheme. We say that a closed subscheme $\overline{E} \subseteq X \times_S X$ is the \emph{closure} of $E$ as a set theoretic equivalence relation, if it is the smallest closed subscheme containing $E$ and defining a set theoretic equivalence relation on $X$.

The following lemma shows that if the closure of $E$ as a finite set theoretic equivalence relation exists over each piece of some stratification of $S$, then it exists in general. 
\begin{lemma} \label{lem:closures-of-subsets-of-finite-equivalence-relations} Let $X$ be a reduced proper scheme over a Noetherian base scheme $S$ (we denote the projection by $\pi \colon X \to S$) and let $E \subseteq X \times_S X$ be a reduced closed subscheme. Suppose that there exists a sequence of closed subschemes $\emptyset =: S_0 \subseteq S_1 \subseteq \ldots \subseteq S_m := S$ such that the reduction of $E_{U_j} := E \times_S U_j \subseteq V_j \times_{U_j} V_j$ is a closed subscheme of a finite set theoretic equivalence relation $H_j \subseteq V_j \times_{U_j} V_j$ for every $1 \leq j \leq m$, where $U_j= S_j \,\backslash\,S_{j-1}$ and $V_j = \pi^{-1}(U_j)\subseteq X$.

Then the closure $\overline{E} \subseteq X \times_S X$ of $E$ exists (as a set theoretic equivalence relation) and is a finite set theoretic equivalence relation.
\end{lemma}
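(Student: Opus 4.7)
The plan is to mimic scheme-theoretically the explicit inductive construction of the equivalence closure from Lemma \ref{lem:closure-of-equivalence-relations-for-sets}, using the stratification hypothesis to bound uniformly the number of iterations required. The first ingredient is a uniform bound on fibre cardinality: for each $1 \leq j \leq m$, the finite morphism $\pi_1 \colon H_j \to V_j$ has an upper-semicontinuous fibre-length function which attains a maximum $d_j \geq 2$ on the Noetherian scheme $V_j$. Setting $d := \max_j d_j$ gives a uniform bound on the cardinality of geometric fibres of $\pi_1 \colon H_j \to V_j$ across all strata, and hence also on the fibres of $\pi_1 \colon \overline{E_{U_j}} \to V_j$ since $\overline{E_{U_j}} \subseteq H_j$.

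Next I would construct the candidate for $\overline{E}$ iteratively. Let $E_0 := (E \cup \rho(E) \cup \Delta_X)_{\red} \subseteq X \times_S X$, with $\rho$ the swap of factors and $\Delta_X$ the diagonal, and inductively set $E_i$ to be the reduced image of the scheme-theoretic fibre product $E_{i-1} \times_X E_{i-1}$ (along the second projection of the first copy and the first projection of the second) under the map $(x_1, x_2, x_3) \mapsto (x_1, x_3)$. A preliminary observation is that $\pi_1 \colon E \to X$ is finite: it is proper as a closed subscheme of the proper morphism $\pi_1 \colon X \times_S X \to X$, and each of its geometric fibres embeds by hypothesis into the corresponding fibre of some $H_j \to V_j$; a proper quasi-finite morphism with Noetherian target is finite. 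An induction on $i$ then shows that every $E_i$ is finite over $X$ via either projection, and in particular closed in $X \times_S X$.

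Finally, I would show stabilisation and universality. The construction commutes, after reducing, with base change to $V_j \times_{U_j} V_j$, so the reduced restrictions $(E_i)_{U_j,\red}$ reproduce exactly the set-theoretic iterates produced by Lemma \ref{lem:closure-of-equivalence-relations-for-sets} applied to the reduction of $E_{U_j}$ inside $H_j$. By that lemma and the bound from the first paragraph, the sequence stabilises with $(E_{d-2})_{U_j,\red} = \overline{E_{U_j}}$ for every $j$. It follows that $\overline{E} := E_{d-2}$ is a set-theoretic equivalence relation (a check on geometric points, which reduces stratum-wise to the already-established $\overline{E_{U_j}}$), and by induction on $i$ any closed subscheme of $X \times_S X$ that contains $E$ and defines a set-theoretic equivalence relation must contain every $E_i$, hence $\overline{E}$. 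The main technical hurdle I anticipate is in this final step, namely carefully matching the scheme-theoretic iterated fibre product with the set-theoretic construction of Lemma \ref{lem:closure-of-equivalence-relations-for-sets} on each (possibly non-reduced) stratum and verifying compatibility with reduction, so that a single global uniform bound on fibre cardinalities indeed suffices to read off stabilisation across the whole of $X \times_S X$.
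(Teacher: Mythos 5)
Your proposal is correct and follows essentially the same route as the paper: the same iterated construction $E_0 = (E \cup \rho(E) \cup \Delta)_{\red}$, $E_i = $ reduced image of $E_{i-1} \times_X E_{i-1}$, with stabilisation checked on geometric points stratum by stratum using the uniform bound $d = \max_j d_j$ on the fibres of $H_j \to V_j$ together with Lemma \ref{lem:closure-of-equivalence-relations-for-sets}. The only (harmless) additions are your explicit verification that each $E_i$ is finite over $X$ and the explicit minimality check, both of which the paper leaves implicit.
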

\noindent The notation above makes sense  as (locally closed) immersions are stable under base change (\cite[Tag 01JY]{stacks-project}), hence $V_j = \pi^{-1}(U_j) \simeq X \times_S U_j$ and $V_j \times_{U_j} V_j \simeq (X \times_S X) \times_S U_j$.
\begin{proof}
As above, set $E_0 = E \cup \rho(E) \cup \Delta$, where $\Delta \subseteq X \times_S X$ is the diagonal and $\rho \colon X \times_S X \to X \times_S X$ is the standard involution. We construct $E_i$ inductively, by setting $E_i$ to be the reduction of the image of $E_{i-1} \times_{X} E_{i-1}$ in $X\times_S X$ where the maps $E_{i-1} \rightrightarrows X$ are given by projecting onto the second and the first factor, respectively. Then $E_0 \subseteq E_1 \subseteq E_2 \subseteq \ldots$ and we claim that this sequence stabilises at some point $r \in \bN$. Then $\overline{E} = \bigcup_{i\geq 0} E_i = E_r$ is a set theoretic equivalence relation, which is in fact finite as will be shown in the proof below.

For the claim, pick $d_j$ to be the maximum among the degrees of the fibres of $H_j \to V_j$ (this number exists by \cite[Tag 03JA]{stacks-project}). We will show that $E_i = E_{i+1}$ for all $i\geq r-2$, where $r$ is the maximum among $d_j$ for $1 \leq j \leq m$. To this end, it is enough to show that $E_i(K) = E_{i+1}(K)$ for $i \geq r-2$ and  every geometric point $\Spec K \to S$ with $K$ algebraically closed (here, by abuse of notation, $E_i(K)$ denotes the $K$-points of $E_i$ over the fixed $K$-point of $S$). We may assume that the image of $\Spec K \to S$ is contained in $U_j$ for some $1 \leq j \leq m$. 

Then $E(K) \subseteq V_j(K) \times V_j(K)$ and $\bigcup_{i\geq 0} E_i(K)$ is the closure $\overline{E(K)}$ of $E(K)$ (here, $E_i(K)$ can be constructed directly from $E(K)$ as in the statement of Lemma \ref{lem:closure-of-equivalence-relations-for-sets}, that is $E_i(K) = (E(K))_i$). Since $H_j(K)$ is an equivalence relation on $V_j(K)$ and $E(K)\subseteq H_j(K)$, we have that $\overline{E(K)} \subseteq H_j(K)$. In particular, the fibres of the projection $\overline{E(K)} \to V_j(K)$ onto the first factor are contained in the fibres of $H_j(K) \to V_j(K)$, and so their size is bounded by $d_j$. The claim now follows by Lemma \ref{lem:closure-of-equivalence-relations-for-sets}. \qedhere


\end{proof}


\subsection{Conductors and blow-ups} \label{ss:conductors}

Consider a commutative diagram
\begin{center}
\begin{tikzcd}
Y \arrow{d}{f} & E \arrow{d}{g} \arrow{l}{i} \\
X  & Z \arrow{l}{j}
\end{tikzcd}
\end{center}
where $f$ is a finite map of reduced Noetherian schemes, $Z \subseteq X$ is a closed subscheme, $E \subseteq Y$ is the scheme theoretic inverse image of $Z$, and $f$ is an isomorphism over the complement of $Z$.

For every finite surjective map $f$ of reduced Noetherian schemes, we can construct such a diagram with $Z$ being the \emph{conductor}. Affine locally, we define it as follows. Let $R \subseteq S$ be a finite extension of rings and set $I = \{ s \in S \,\mid\, sS \subseteq R\}$. The ideals $I\subseteq R$ and $I = IS \subseteq S$ define the conductors $C$ and $D$ (then set $Z := C$ and $E := D$). 

Note that $R \simeq S \times_{S/I} R/I$; this is an example of a Milnor square. In fact, that given any ideal $J \subseteq I$, we have that $\phi \colon R \simeq S \times_{S/J} R/J$, which, in turn, yields
\[
R^* \simeq S^* \times_{(S/J)^*} (R/J)^*.
\]   
Indeed, if $r \in R$ is invertible, then its image in $S \times_{S/J} R/J$ is invertible, too, and so we only need to show that given $(s, [r]) \in S^* \times_{(S/J)^*} (R/J)^*$, the corresponding element $r \in R$ via the isomorphism $\phi$ is invertible. But the inverse of an element is unique, so if $s' \in S$ is the inverse of $s$ and $[r'] \in R/J$ is the inverse of $[r]$, then $(s', [r'])$ is an element of $S \times_{S/J} R/J$, and so is the inverse of $(s,[r])$. The corresponding element $r' \in R$ is the inverse of $r$.

In particular, the above paragraph shows the following.
\begin{lemma} \label{lemma:Milnor_Gm}
For a finite surjective map $f$ of reduced Noetherian schemes and a diagram as above, suppose that $Z$ contains the conductor scheme scheme-theoretically. Then the induced diagram
\begin{center}
		\begin{tikzcd}
			\cO^*_X\arrow{r} \arrow{d} & \cO_Y^*\arrow{d} \\
			\cO^*_Z \arrow{r} & \cO^*_E
		\end{tikzcd}
\end{center}
is Cartesian.
\end{lemma}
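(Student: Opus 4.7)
The assertion is local on $X$, so the plan is to reduce to the affine situation and then quote the Milnor-square calculation that was just performed in the paragraph preceding the statement. Concretely, I would cover $X$ by affine opens $\Spec R$ with $Y|_{\Spec R} = \Spec S$ for a finite ring extension $R \hookrightarrow S$ whose conductor ideal is $I = \{s \in S \mid sS \subseteq R\}$; the scheme-theoretic containment of the conductor in $Z$ translates to $Z = V(J)$ for an ideal $J \subseteq R$ with $J \subseteq I$, and then $E = V(JS)$ automatically. The preceding paragraph already establishes the Milnor-type ring isomorphism $R \simeq S \times_{S/JS} R/J$ and, from it, the units isomorphism
\[
R^{*} \simeq S^{*} \times_{(S/JS)^{*}} (R/J)^{*},
\]
which is exactly the Cartesianity of the displayed diagram evaluated on global sections of this affine open.

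To promote this from an affine statement to a statement about sheaves of units on $X$ (implicitly identifying $\cO_Y^*$, $\cO_Z^*$, $\cO_E^*$ with their pushforwards to $X$, which is harmless since $f$, $j$, and $j \circ g$ are affine and in particular $f_{*}$ is exact and preserves finite limits), I would check the diagram stalk-locally. At a point $x \in X \setminus Z$ the map $f$ is an isomorphism on a neighbourhood (since $Z$ contains the conductor, and outside the conductor $f$ is already an isomorphism by the defining diagram), while $\cO_Z^{*}$ and $\cO_E^{*}$ have zero stalks, so the diagram is trivially Cartesian there. At a point $x \in Z$ corresponding to $\mathfrak p \subseteq R$, I localise: the key compatibility needed is that the conductor commutes with localisation, which holds because $I$ is the $R$-annihilator of the finitely generated $R$-module $S/R$, and annihilators of finitely generated modules commute with flat base change. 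The localised extension $R_{\mathfrak p} \hookrightarrow S_{\mathfrak p}$ has conductor $I_{\mathfrak p}$, still contains $J_{\mathfrak p}$, and applying the same Milnor-square argument verbatim yields Cartesianity on stalks.

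There is no real obstacle here beyond unwinding definitions: the content of the lemma is entirely the algebraic Milnor-square computation already carried out in the excerpt, and the only thing being added is the observation that this computation sheafifies, i.e.\ is compatible with localisation on $X$ because both the conductor ideal and the formation of units are. If I wanted to streamline, I would note that the two natural candidate maps $\cO_X^{*} \to \cO_Y^{*} \times_{\cO_E^{*}} \cO_Z^{*}$ (injectivity and surjectivity of this morphism of sheaves of abelian groups) can be checked on stalks, and each stalk reduces to precisely the ring-theoretic statement $R_{\mathfrak p}^{*} \simeq S_{\mathfrak p}^{*} \times_{(S_{\mathfrak p}/JS_{\mathfrak p})^{*}} (R_{\mathfrak p}/J_{\mathfrak p})^{*}$ handled above.
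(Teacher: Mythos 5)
Your overall strategy---reduce to an affine chart, invoke the Milnor-square computation from the paragraph preceding the lemma, and observe that everything is compatible with localisation because the conductor is $\mathrm{Ann}_R(S/R)$ and units commute with restriction---is exactly the paper's route: the paper offers no separate proof beyond ``the above paragraph shows the following,'' and your stalkwise bookkeeping (including the trivial case of points outside $Z$) is a correct elaboration of what the paper leaves implicit.

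There is, however, a genuine gap in your key displayed step. The preceding paragraph proves $R \simeq S \times_{S/J} R/J$ for an ideal $J \subseteq I$ \emph{of $S$} (a common ideal of $R$ and $S$); that is what makes $S/J$ meaningful, and it is what the surjectivity argument uses: for $(s,[r])$ in the fibre product one gets $s - r \in J$, hence $s \in R$ \emph{and} $s \equiv r \bmod J$. You instead let $J$ be an arbitrary ideal of $R$ with $J \subseteq I$ (which is all that ``$Z$ contains the conductor scheme-theoretically'' gives) and assert $R \simeq S \times_{S/JS} R/J$. This fails whenever $JS \neq J$, since then one only obtains $s \equiv r \bmod JS$, not $\bmod\ J$. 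Concretely, take $R = k[t^2,t^3] \subseteq S = k[t]$, so $I = t^2S$, and $J = t^4R \subseteq I$; then $t^5 \in JS \setminus J$ and $(t^5)^2 \in J$, so the pair $(1,\, 1+\overline{t^5})$ lies in $S^{*} \times_{(S/JS)^{*}} (R/J)^{*}$ but is not in the image of $R^{*}$ (the only candidate preimage is $1$, and $t^5 \notin J$). So your argument requires the additional hypothesis that the ideal of $Z$ is an $\cO_Y$-submodule of $\cO_X$, equivalently that $\cO_Z \to g_{*}\cO_E$ is injective; this holds automatically when $Z=C$ and $E=D$ (the case needed for Lemma \ref{lemma:Gm-for-geometrically-connected-fibres}) but is not implied by $J \subseteq I$ alone. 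You should either add and use this hypothesis explicitly or explain why every invocation of the lemma satisfies it; as written, the displayed isomorphism is false and the proof does not close.
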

\noindent In fact, a more general result holds.

\begin{lemma} \label{lemma:Milnor_Gm2}
Let $f \colon Y \to X$ be a finite map of reduced Noetherian schemes and let $Z \subseteq X$ a closed subscheme defined by an ideal sheaf $\cI$. Suppose that $f$ is an isomorphism over the complement of $Z$. Then the induced diagram
\begin{center}
    \begin{tikzcd}
      \cO^*_X\arrow{r} \arrow{d} & \cO_{Y}^*\arrow{d} \\
      \cO^*_{Z_n} \arrow{r} & \cO^*_{E_n}
    \end{tikzcd}
\end{center}
is Cartesian for some $n\gg 0$, where $Z_n \subseteq X$ is a subscheme defined by $\cI^n$ and $E_n$ is its scheme theoretic inverse image.
\end{lemma}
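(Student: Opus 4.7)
The plan is to reduce to Lemma~\ref{lemma:Milnor_Gm} by factoring $f$ through its scheme-theoretic image and pasting two Cartesian squares of unit sheaves. Working affine-locally, write $X = \Spec R$, $Y = \Spec S$, and let $K$ and $Q$ denote the kernel and cokernel of $R \to S$. Both are finitely generated $R$-modules supported on $Z = V(\cI)$ by the hypothesis, so Noetherianity provides an integer $m$ with $\cI^m K = 0$ and $\cI^m Q = 0$. Set $X' := \Spec(R/K)$, which is the scheme-theoretic image of $Y$ in $X$.

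The first Cartesian square will express $X$ as a Mayer--Vietoris-style pushout of $X'$ and $Z_n$ along their intersection. For $n \geq m$ one has $K \cap \cI^n = 0$, since any element of this intersection squares into $\cI^m K = 0$ and $R$ is reduced; this yields the short exact sequence
\[
0 \to R \to R/K \oplus R/\cI^n \to R/(K + \cI^n) \to 0.
\]
Moreover $K \cdot \cI^n = 0$, so every prime of $R$ contains either $K$ or $\cI^n$; consequently a lift $c \in R$ produced by this sequence is a unit if and only if its images in $R/K$ and $R/\cI^n$ are units, and this upgrades the additive exact sequence to the multiplicative Cartesian square
\[
\cO_X^* \;\simeq\; \cO_{X'}^* \times_{\cO_{X' \cap Z_n}^*} \cO_{Z_n}^*.
\]

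For the second square, the factored map $f' \colon Y \to X'$ is finite surjective and still an isomorphism outside $Z' := Z \cap X'$, so its conductor $\cC' \subseteq \cO_{X'}$ is set-theoretically supported on $Z'$; Noetherianity gives $\cI^n \cO_{X'} \subseteq \cC'$ for $n$ sufficiently large, and Lemma~\ref{lemma:Milnor_Gm} applied to $f'$ then produces
\[
\cO_{X'}^* \;\simeq\; \cO_Y^* \times_{\cO_{E_n}^*} \cO_{X' \cap Z_n}^*.
\]
Pasting with the first square along $\cO_{X'}^*$ collapses the middle vertex and yields
\[
\cO_X^* \;\simeq\; \cO_Y^* \times_{\cO_{E_n}^*} \cO_{Z_n}^*,
\]
which is the desired Cartesianity. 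Globalisation is routine: cover $X$ by finitely many affines, obtain local exponents, and take $n$ to be their maximum. The main obstacle I anticipate is the first Cartesian square, whose proof genuinely uses both the reducedness hypothesis (through $K \cap \cI^n = 0$) and the stronger vanishing $K \cdot \cI^n = 0$; everything that follows is either a direct invocation of Lemma~\ref{lemma:Milnor_Gm} or formal manipulation of fibre products.
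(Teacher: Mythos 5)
Your proof is correct and takes essentially the same route as the paper's: both factor $f$ through the scheme-theoretic image $X' = f(Y)$, establish the left square $\cO_X^* \simeq \cO_{X'}^* \times_{\cO_{Z_n\cap X'}^*} \cO_{Z_n}^*$ from the reducedness of $X$, and obtain the right square from Lemma \ref{lemma:Milnor_Gm} applied to $Y \to X'$ once $\cI^n\cO_{X'}$ sits inside the conductor. The only cosmetic difference is how the multiplicative square is deduced from the additive one (you argue via every maximal ideal containing $K$ or $\cI^n$, the paper reuses its uniqueness-of-inverses argument).
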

\begin{proof}
Let $X' = f(Y)$ and $Z'_n = Z_n \cap X'$. Consider the following diagram
\begin{center}
    \begin{tikzcd}
      \cO^*_X\arrow{r} \arrow{d} & \cO^*_{X'}\arrow{r} \arrow{d} & \cO_{Y}^*\arrow{d} \\
      \cO^*_{Z_n} \arrow{r} & \cO^*_{Z'_n} \arrow{r} & \cO^*_{E_n}.
    \end{tikzcd}
\end{center}
Since $X = X' \cup Z_n$ and $Z'_n = X' \cap Z_n$, we get that $\cO_X = \ker( \cO_{Z_n} \oplus \cO_{X'} \to \cO_{Z'_n})$ (as $X$ is reduced; this is analogous to \cite[Tag 0CTJ]{stacks-project}), and so the left diagram is Cartesian by same argument as that above Lemma \ref{lemma:Milnor_Gm}. The right diagram is Cartesian by Lemma \ref{lemma:Milnor_Gm} as $Y \to X'$ is surjective and $Z'_n$ contains the conductor for $n\gg 0$. 
\end{proof}
Analogues of these results also hold for $\PicS$ (cf.\ \cite[Lemma 2.28]{witaszek2020keels}). In what follows we will need a variant thereof for blow-up squares.  The proof is based on the idea of \cite[Lemma 4.6]{BS17}.

\begin{lemma} \label{lemma:blow-up-square-Pic}
Let $f \colon Y \to X$ be a projective map of Noetherian schemes which is a blow-up of $X$ with respect to an ideal sheaf $\cI$. Suppose that $X=\Spec A$ is affine and $\cI$-adically complete. Then the induced map
\[
\PicS(X) \to \PicS(Y) \times_{\PicS(E_n)} \PicS(Z_n)
\]
is essentially surjective for $n\gg 0$, where $Z_n \subseteq X$ is the subscheme defined by $\cI^n$ and $E_n$ is its scheme theoretic inverse image.
\end{lemma}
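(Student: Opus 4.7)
The plan is to produce $L$ on $X$ by first extending the given data to a compatible system of line bundles $\{L_{Z_m}\}_{m\geq n}$ on all the infinitesimal neighborhoods of $Z$, together with compatible isomorphisms $\phi_m \colon L_Y|_{E_m} \simeq (f|_{E_m})^*L_{Z_m}$ refining the given $\phi$, and then assembling these, using $\cI$-adic completeness of $A$ and formal base change for the proper map $f$, into a line bundle on $X$.

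For the inductive step $m\to m+1$, consider the exponential-type exact sequences
\[
0 \to \cI^m/\cI^{m+1} \to \cO^*_{Z_{m+1}} \to \cO^*_{Z_m} \to 0 \quad\text{and}\quad 0 \to \cO_E(m) \to \cO^*_{E_{m+1}} \to \cO^*_{E_m} \to 0,
\]
where $E \subset Y$ is the exceptional Cartier divisor (the invertibility of $\cI\cO_Y$ is the defining property of the blow-up), and $\cO_E(1) := \cO_Y(-E)|_E$ is the Serre twist of the Proj construction, hence relatively ample over $Z$. Since $X$ is affine, $Z$ is affine too, so $H^i(Z, \cI^m/\cI^{m+1}) = 0$ for $i \geq 1$; this yields existence and uniqueness of the lift $L_{Z_{m+1}}$ of $L_{Z_m}$. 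Serre vanishing on the projective $Z$-scheme $E$ yields $H^1(E, \cO_E(m)) = 0$ for $m \gg 0$, and we fix $n$ large enough that this holds for every $m \geq n$. Then the two lifts $L_Y|_{E_{m+1}}$ and $(f|_{E_{m+1}})^*L_{Z_{m+1}}$ of $(f|_{E_m})^*L_{Z_m}\simeq L_Y|_{E_m}$ to $E_{m+1}$ must be isomorphic compatibly with $\phi_m$, providing $\phi_{m+1}$.

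To produce $L$, we set $L := f_*L_Y$, a coherent $\cO_X$-module. On $X\setminus Z$, where $f$ is an isomorphism, $L$ restricts to $L_Y|_{Y\setminus E}$, a line bundle. Along $Z$, formal base change for the proper map $f$ identifies the $\cI$-adic completion $(f_*L_Y)^{\wedge}$ with $\hat f_*\hat L_Y$, and via the $\phi_m$ together with the projection formula and the classical blow-up identity $(f|_{E_m})_*\cO_{E_m} \simeq \cO_{Z_m}$ for $m\gg 0$ (a consequence of $R^i f_*\cO_Y(-mE) = 0$ for $i>0$ and $f_*\cO_Y(-mE) \simeq \cI^m$ for $m\gg 0$), this completion is canonically the inverse limit $\lim_m L_{Z_m}$. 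In particular, $L/\cI^m L \simeq L_{Z_m}$ for $m \geq n$. Nakayama's lemma together with Krull's intersection theorem (valid in the $\cI$-complete setting) then imply that $L$ is locally free of rank one at every prime containing $\cI$; combined with the behaviour on $X\setminus Z$ this shows $L$ is a line bundle on $X$. By construction $L|_{Z_n}\simeq L_{Z_n}$ matches $\phi_n$, and the canonical map $f^*L\to L_Y$ is an isomorphism away from $E$ while also being the limit of the $\phi_m^{-1}$ on completions, hence an isomorphism on all of $Y$.

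The main obstacle is the inductive formal extension — specifically, ensuring that for $n$ large enough the Serre vanishing $H^1(E, \cO_E(m)) = 0$ propagates compatibility of the isomorphisms through every level. A secondary technical subtlety is that $f_*\cO_Y$ may differ from $\cO_X$ when $X$ is not normal, but because we work throughout with the pushforward of the line bundle $L_Y$ and only invoke the weaker formal identity $(f|_{E_m})_*\cO_{E_m}\simeq \cO_{Z_m}$ for $m$ large (which is the correct statement coming from the Serre vanishing $R^if_*\cO_Y(-mE)=0$), this causes no genuine obstruction.
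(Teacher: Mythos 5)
The first half of your argument --- deforming $L_{Z_n}$ to a compatible system $\{L_{Z_m}\}_{m\ge n}$ using affineness of the $Z_m$, and deforming $\phi$ through the square-zero extensions $E_m\subseteq E_{m+1}$ using Serre vanishing for $\cI^m\cO_Y/\cI^{m+1}\cO_Y\simeq\cO_{E_1}(m)$ --- is sound and runs parallel to the paper's proof. The gap is in the assembly step. The identity $(f|_{E_m})_*\cO_{E_m}\simeq\cO_{Z_m}$ that you invoke is false in general: by the theorem on formal functions, $\varprojlim_m (f|_{E_m})_*\cO_{E_m}\simeq (f_*\cO_Y)^{\wedge}=f_*\cO_Y$ (the last equality because $f_*\cO_Y$ is finite over the complete ring $A$), whereas $\varprojlim_m\cO_{Z_m}=\cO_X$; so the identity must fail for large $m$ whenever $f_*\cO_Y\neq\cO_X$. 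A blow-up of a Noetherian scheme need not satisfy $f_*\cO_Y=\cO_X$: blowing up the maximal ideal at the node of a complete local nodal curve yields the normalisation, and there $L:=f_*L_Y=f_*\cO_Y$ is a rank-one torsion-free module which is \emph{not} invertible at the node. Thus what you call a ``secondary technical subtlety'' is in fact fatal to the construction $L:=f_*L_Y$, and your attempted workaround (retreating to the ``weaker formal identity'') asserts exactly the statement that fails.

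The repair is to avoid $f_*$ entirely when building $L$. Since $X=\Spec A$ is affine and $\cI$-adically complete, your compatible system $\{L_{Z_m}\}$ algebraizes directly to a line bundle $L$ on $X$ with $L|_{Z_m}\simeq L_{Z_m}$: the obstruction to each lift lies in $H^2(Z_1,\cI^m/\cI^{m+1})=0$ and the lift is unique as $H^1(Z_1,\cI^m/\cI^{m+1})=0$, and $\Pic(A)\to\varprojlim_m\Pic(A/\cI^m)$ is bijective for complete Noetherian $A$. One then takes your compatible isomorphisms $\psi_m\colon L_Y|_{E_m}\xrightarrow{\simeq} f^*L|_{E_m}$ and applies Grothendieck's existence theorem for the proper morphism $f$ to algebraize them to a global isomorphism $L_Y\simeq f^*L$ (an isomorphism because that can be checked on reductions). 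This is precisely the route the paper takes; with this substitution your Serre-vanishing computation --- including your correct observation that the bound on $n$ is independent of the triple $(L_Y,L_{Z_n},\phi)$, which is needed for the uniform ``$n\gg 0$'' in the statement --- coincides with the paper's.
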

\begin{proof}
We need to show that for $n \gg 0$ every $(L_Y, L_{Z_n}, \phi) \in \PicS(Y) \times_{\PicS(E_n)} \PicS(Z_n)$, where $\phi \colon L_Y|_{E_n} \xrightarrow{\simeq} (f|_{E_n})^*L_{Z_n}$,  is isomorphic to the image of some $L \in \PicS(X)$. 

By affineness of $Z_n$ and deformation theory, there exists a line bundle $L$ on $X$ such that $L|_{Z_n} \simeq L_{Z_n}$. By composing the pullback of this isomorphism with $\phi$, we get an isomorphism $\psi_n \colon L_Y|_{E_n} \xrightarrow{\simeq} f^*L|_{E_n}$. 

To conclude the proof it is enough to show that $\psi_n$ extends to an isomorphism $\psi \colon L_Y \xrightarrow{\simeq} f^*L$. By Grothendieck's existence theorem (\cite[Tag 0885]{stacks-project}), this would follow if we found compatible lifts $\psi_m \colon L_Y|_{E_m} \xrightarrow{\simeq} f^*L|_{E_m}$ for every $m\geq n$, where $E_m$ is the scheme theoretic inverse image of the subscheme $Z_m \subseteq X$ defined by $\cI^m$. To lift $\psi_m$ to $\psi_{m+1}$ (which must then be an isomorphism, as being an isomorphism depends only on the reduced structure), it suffices to prove that
\[
H^0(E_{m+1}, {\cH}om(L_Y|_{E_{m+1}}, f^*L|_{E_{m+1}})) \to H^0(E_{m}, {\cH}om(L_Y|_{E_{m}}, f^*L|_{E_{m}})) 
\]
is surjective for every $m\geq n \gg 0$, and so it is enough to show that
\begin{align*}
H^1(E_{m+1}, \cI^m\! \otimes\! {\cH}om(L_Y|_{E_{m+1}}, f^*L|_{E_{m+1}}))\! &=\! H^1(E_1, \cI^m\!/\cI^{m+1}\! \otimes\! {\cH}om(L_Y|_{E_1}, f^*L|_{E_1})) \\
&\simeq\! H^1(E_1, \cI^m\!/\cI^{m+1} \!\otimes\! {\cH}om(f^*L|_{Z_1}, f^*L|_{Z_1}))
\end{align*}
is zero for $m\geq n \gg 0$. Here ${\cH}om(f^*L|_{Z_1}, f^*L|_{Z_1})$ is locally (over $Z_1$) isomorphic to $\cO_{E_1}$, and so, by affineness of $Z_1$, it is enough to show that $R^1(f|_{E_1})_* \cI^m/\cI^{m+1} = 0$, which by affineness of $Z_1$ again reduces to showing that $H^1(E_1, \cI^m/\cI^{m+1})=0$. This is true by Serre vanishing as $\cI^m/\cI^{m+1} = \cO_{E_1}(m)$ and $\cO_{E_1}(1)$ is relatively ample. We emphasise that the bound from Serre's vanishing does not depend on the choice of $(L_Y, L_{Z_n}, \phi)$.
\end{proof}

\section{Picard groupoid in the $h$-topology}
The goal of this section is to prove Theorem \ref{thm:fibre-of-Pic-is-h-sheaf}.
Given a fixed object $M \in \PicS(S_{\bQ})$, we denote by $\underline{\PicS}_M \colon Sch/S \to Groupoids$ the pseudo-functor such that 
\[
\underline{\PicS}_M(X) = \hofib_{M|_{X_{\bQ}}}(\PicS(X) \to \PicS(X_{\bQ}))
\]
for $X \in Sch/S$. Given $s \in \bG_m(S_{\bQ})$, we denote by $\underline{\bG}_{m,s} \colon Sch/S \to Sets$ a functor such that
\[
\underline{\bG}_{m,s}(X) = \{ t \in \bG_m(X) \, \mid\, t|_{X_{\bQ}} = s|_{X_{\bQ}}\}.
\]
We write $\underline{\PicS}(X) := \underline{\PicS}_{\cO_{S_{\bQ}}}(X)$ and $\underline{\bG}_m := \underline{\bG}_{m,1}$.

The functor $\underline{\bG}_{m,s}$ is a sheaf for the fppf topology and the pseudo-functor $\underline{\PicS}_M$ is a stack in groupoids for the fppf topology (cf.\ Lemma \ref{lem:homotopy-fibre-of-t-stack-is-t-stack}), as so are $\bG_m$ and $\PicS$, respectively (\cite[Tag 04WN]{stacks-project}). In fact, $\underline{\PicS}$ is a stack in (strictly) symmetric monoidal groupoids. That is not the case, however, for $\underline{\PicS}_M$ and a fixed $M \in \PicS(S_{\bQ})$ which is non-trivial. 
\begin{lemma} \label{lemma:kernel-of-Picard-groupoids-in-derived-categories} We have that $\underline{\bG}_m(X) = \mathrm{ker}(H^0(X, \cO_X^*)\to H^0(X_{\bQ}, \cO^*_{X_{\bQ}}))$ and
\[
\underline{\PicS}(X) \simeq \tau_{\leq 0}\cocone(R\Gamma(X, \cO_X^*)[1]\to R\Gamma(X_{\bQ}, \cO^*_{X_{\bQ}})[1]).
\]
\end{lemma}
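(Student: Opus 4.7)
The first identity is immediate from unwinding the definition of $\underline{\bG}_{m,s}$: with $s = 1$, the condition $t|_{X_{\bQ}} = 1$ is exactly that $t$ lies in the kernel of the pullback $H^0(X, \cO_X^*) \to H^0(X_{\bQ}, \cO^*_{X_{\bQ}})$. No further argument is needed.

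For the groupoid statement, the plan is to combine the Picard-complex identification with the derived description of the homotopy fibre. First I would apply Lemma~\ref{lem:picard-groupoid-in-terms-of-derived-category} to both $X$ and $X_{\bQ}$ to obtain equivalences
\[
\PicS(X) \simeq \tau_{\leq 0}(R\Gamma(X, \cO_X^*)[1]) \qquad\text{and}\qquad \PicS(X_{\bQ}) \simeq \tau_{\leq 0}(R\Gamma(X_{\bQ}, \cO^*_{X_{\bQ}})[1]),
\]
with the right-hand sides regarded as groupoids through the construction of Remark~\ref{remark:groupoids-and-cocones}. These equivalences are natural: the pullback $\PicS(X) \to \PicS(X_{\bQ})$ corresponds, via the \v Cech model described in the proof of Lemma~\ref{lem:picard-groupoid-in-terms-of-derived-category}, to the morphism of associated groupoids coming from the honest map of \v Cech complexes induced by $X_{\bQ} \to X$. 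Under this identification the trivial line bundle $\cO_{X_{\bQ}} = \cO_{S_{\bQ}}|_{X_{\bQ}}$ (the base-point used in forming $\underline{\PicS}$) corresponds to the zero object of the target groupoid.

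Next I would invoke Remark~\ref{remark:groupoids-and-cocones}, which identifies the homotopy fibre over the zero object of a map between complexes supported in degrees $[-1, 0]$ with the $\tau_{\leq 0}$ of the cocone. Applied to the map above, this gives
\[
\underline{\PicS}(X) \simeq \tau_{\leq 0}\cocone\bigl(\tau_{\leq 0} R\Gamma(X, \cO_X^*)[1] \longrightarrow \tau_{\leq 0} R\Gamma(X_{\bQ}, \cO^*_{X_{\bQ}})[1]\bigr).
\]

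The one point requiring genuine checking — and the main, though mild, obstacle — is to see that the inner truncations can be removed without changing the result, i.e.\ that the right-hand side above is equivalent to $\tau_{\leq 0}\cocone(R\Gamma(X, \cO_X^*)[1] \to R\Gamma(X_{\bQ}, \cO^*_{X_{\bQ}})[1])$. I would deduce this from the $3 \times 3$ lemma applied to the triangles $\tau_{\leq 0} \to \id \to \tau_{>0}$ on source and target: the two cocones fit into an exact triangle whose third term is $\cocone(\tau_{>0} R\Gamma(X, \cO_X^*)[1] \to \tau_{>0} R\Gamma(X_{\bQ}, \cO^*_{X_{\bQ}})[1])$, which is supported in cohomological degrees $\geq 1$ and is therefore annihilated by $\tau_{\leq 0}$. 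A direct check on the long exact sequences of cohomology — noting that $H^{-1}$ and $H^0$ of the cocone depend only on the $H^{-1}$ and $H^0$ of source and target, which are unaffected by $\tau_{\leq 0}$ — gives the same conclusion and may be presented instead for clarity.
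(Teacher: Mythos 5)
Your proposal is correct and follows essentially the same route as the paper: the first identity by unwinding definitions, and the second by combining Lemma \ref{lem:picard-groupoid-in-terms-of-derived-category} with Remark \ref{remark:groupoids-and-cocones} together with the observation that $\tau_{\leq 0}\cocone(A^{\bullet}\to B^{\bullet}) \simeq \tau_{\leq 0}\cocone(\tau_{\leq 0}A^{\bullet}\to \tau_{\leq 0}B^{\bullet})$. Your justification of that last compatibility (via the truncation triangles, or directly on long exact sequences) is exactly the point the paper leaves implicit, and your check is sound.
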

\begin{proof}
The first part follows by definition and the second one is a consequence of Lemma \ref{lem:picard-groupoid-in-terms-of-derived-category} and Remark \ref{remark:groupoids-and-cocones}. Here we use that for a map of complexes $A^{\bullet} \to B^{\bullet}$ we have that $\tau_{\leq 0}\cocone(A^{\bullet}\to B^{\bullet}) \simeq \tau_{\leq 0}\cocone(\tau_{\leq 0}A^{\bullet}\to \tau_{\leq 0}B^{\bullet})$.
\end{proof}
\begin{proposition} \label{prop:our-functors-under-thickenings} Let $Y$ be a finite type scheme over a Noetherian base scheme $S$ and let $f \colon Y \to X$ be a thickening in $Sch/S$. Then $\underline{\bG}_m(X) \otimes \bQ \simeq \underline{\bG}_m(Y) \otimes \bQ$ and $\underline{\PicS}(X) \otimes \bQ \simeq \underline{\PicS}(Y) \otimes \bQ$.
\end{proposition}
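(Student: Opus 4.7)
The plan is to reduce both claims to a statement in the derived category of abelian groups via Lemma~\ref{lemma:kernel-of-Picard-groupoids-in-derived-categories}. Since $\underline{\bG}_m(X)$ is precisely $\pi_1$ of the groupoid $\underline{\PicS}(X)$ (i.e.\ the automorphism group of the identity object), the $\underline{\bG}_m$ statement follows from the one for $\underline{\PicS}$. Setting
\[
C_X := \cocone\bigl(R\Gamma(X, \cO_X^*) \to R\Gamma(X_{\bQ}, \cO^*_{X_{\bQ}})\bigr),
\]
and similarly $C_Y$, we have $\underline{\PicS}(X) = \tau_{\leq 0}(C_X[1])$, and since $\bQ$ is $\bZ$-flat the operation $\otimes \bQ$ commutes with $\tau_{\leq 0}$. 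Hence it suffices to prove that the natural map $C_X \otimes_{\bZ} \bQ \to C_Y \otimes_{\bZ} \bQ$ is a quasi-isomorphism.

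Let $\cI \subseteq \cO_X$ be the (nilpotent, by Noetherianity) ideal sheaf of $Y$ in $X$. Because $\cO_X$ is commutative, $1 + \cI$ is a sheaf of abelian groups fitting into short exact sequences
\[
1 \to 1 + \cI \to \cO_X^* \to \cO_Y^* \to 1 \quad \text{and} \quad 1 \to 1 + \cI_{\bQ} \to \cO^*_{X_{\bQ}} \to \cO^*_{Y_{\bQ}} \to 1
\]
on the common underlying topological spaces of $X, Y$ and of $X_{\bQ}, Y_{\bQ}$. Applying $R\Gamma$ yields two exact triangles joined by a morphism induced by restriction. The associated triangle of vertical cocones (an instance of the $3 \times 3$-lemma; cf.\ Lemma~\ref{lem:octahedral}) takes the form
\[
\cocone\bigl(R\Gamma(X, 1+\cI) \to R\Gamma(X_{\bQ}, 1+\cI_{\bQ})\bigr) \to C_X \to C_Y \xrightarrow{+1},
\]
so the problem is reduced to showing that the map $R\Gamma(X, 1+\cI) \to R\Gamma(X_{\bQ}, 1+\cI_{\bQ})$ is a quasi-isomorphism after $\otimes_{\bZ} \bQ$.

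To this end, use the finite filtration $1 + \cI \supseteq 1 + \cI^2 \supseteq \cdots \supseteq 1 + \cI^n = 1$ coming from the nilpotence of $\cI$. For $k \geq 1$, the assignment $x \mapsto 1 + x$ is a group isomorphism $\cI^k/\cI^{k+1} \xrightarrow{\simeq} (1 + \cI^k)/(1 + \cI^{k+1})$, since $(1+x)(1+y) = 1 + x + y + xy$ and $xy \in \cI^{2k} \subseteq \cI^{k+1}$. By induction on the length of the filtration (base case $1 + \cI^n = 1$ being trivial) together with the long exact sequence, it suffices to prove the analogous statement for each graded piece, namely that
\[
R\Gamma(X, \cI^k/\cI^{k+1}) \otimes_{\bZ} \bQ \xrightarrow{\simeq} R\Gamma(X_{\bQ}, (\cI^k/\cI^{k+1})|_{X_{\bQ}}).
\]
Since $\cI^k/\cI^{k+1}$ is a quasi-coherent sheaf on the Noetherian closed subscheme $Y \subseteq X$, and $X_{\bQ} \to X$ is a flat affine localization, this is the standard compatibility between cohomology and flat base change for quasi-coherent sheaves on qcqs schemes (using that cohomology commutes with filtered colimits). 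The main bookkeeping concern is the consistent tracking of shifts in the octahedral step; the inductive reduction to the graded pieces and the final flat base change are routine.
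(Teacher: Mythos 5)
Your proposal is correct and follows essentially the same route as the paper: both pass to the derived-category presentation of $\underline{\PicS}$, use the short exact sequence $1 \to 1+\cI \to \cO_X^* \to \cO_Y^* \to 1$ and its $\bQ$-fibre to reduce to showing that $R\Gamma(X,1+\cI)\to R\Gamma(X_{\bQ},1+\cI_{\bQ})$ becomes a quasi-isomorphism after $\otimes\,\bQ$, and then exploit the additive (quasi-coherent) structure of the graded pieces together with the fact that inverting integers computes cohomology over $X_{\bQ}$. The only cosmetic difference is that you filter the sheaf $1+\cI$ by powers of $\cI$ while the paper d\'evisses the thickening itself into square-zero extensions (the same filtration viewed on the level of schemes), and you deduce the $\underline{\bG}_m$ statement as $\pi_1$ of the $\underline{\PicS}$ statement rather than running the (simpler) parallel argument.
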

\begin{proof}
We deal with the case of $\underline{\PicS} \otimes \bQ$ as that of $\underline{\bG}_m(X) \otimes \bQ$ is analogous and simpler. Consider the following map of exact sequences
\begin{center}
\begin{tikzcd}
1 \arrow{r} & 1 + \cI \arrow{r} \arrow{d} & \cO^*_X \arrow{r} \arrow{d} & \cO^*_{Y} \arrow{r} \arrow{d} & 1 \\
1 \arrow{r} & 1 + \cI_{\bQ} \arrow{r} & \cO^*_{X_{\bQ}} \arrow{r} & \cO^*_{Y_{\bQ}} \arrow{r} & 1. 
\end{tikzcd}
\end{center}
\noindent By decomposing $f$ into square-zero extensions, we may assume that $\cI^2=0$. 

Applying $R\Gamma$ turns exact sequences into exact triangles and taking cocones in a map of exact triangles yields an exact triangle (\cite[Proposition 1.1.11]{BBDG}). Hence, we get 
\begin{align*}
\cocone( R\Gamma(X, 1 + \cI) \to R\Gamma(X_{\bQ}, 1 + \cI_{\bQ})) &\to \cocone(R\Gamma(X,\cO^*_X) \to R\Gamma(X_{\bQ},\cO^*_{X_{\bQ}}))\\
&\to \cocone(R\Gamma(Y, \cO^*_{Y}) \to R\Gamma(Y_{\bQ}, \cO^*_{Y_{\bQ}})) \xrightarrow{+1}.
\end{align*}
Let us call the leftmost term $C$. We claim that $C \otimes \bQ = 0$. Since tensoring by $\bQ$ is exact, this will imply that the two other terms are isomorphic up to tensoring by $\bQ$. Tensoring by $\bQ$ commutes with taking truncation as well, and so the proposition follows by Lemma \ref{lemma:kernel-of-Picard-groupoids-in-derived-categories}.

To show the claim, note that $(1+\cI) \simeq \cI$ as abelian sheaves, where the left term is endowed with the multiplicative structure and the right term is endowed with the additive structure. This follows from the fact that $\cI^2=0$ (use the formula $(1+a)(1+b) = 1 + (a+b)$ for local sections $a$ and $b$ of $\cI$). Therefore, 
\[
(1+\cI) \otimes \bQ \simeq 1 + \cI \otimes \bQ \simeq 1 + \cI_{\bQ} \simeq  1 + \cI_{\bQ} \otimes \bQ  \simeq (1 + \cI_{\bQ}) \otimes \bQ,
\]
and by exactness of tensoring by $\bQ$ over $\bZ$, the claim follows.\qedhere
\end{proof}
\begin{corollary}\label{cor:our-functors-under-thickenings-twisted}
Let $Y$ be a finite type scheme over a Noetherian base scheme $S$, let $M$ be a fixed line bundle on $S_{\bQ}$, and let $f \colon Y \to X$ be a thickening in $Sch/S$. Then $\underline{\PicS}_M(X) \otimes \bQ \simeq \underline{\PicS}_M(Y) \otimes \bQ$.

Similarly, $\underline{\bG}_{m,s}(X) \otimes \bQ \simeq \underline{\bG}_{m,s}(Y) \otimes \bQ$ for $s \in \bG_m(S_{\bQ})$.
\end{corollary}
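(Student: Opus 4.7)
The strategy is to reduce the twisted statement to the untwisted one (Proposition~\ref{prop:our-functors-under-thickenings}) via the natural torsor structure. For any $T \in Sch/S$, tensor product defines an action $\underline{\PicS}(T) \times \underline{\PicS}_M(T) \to \underline{\PicS}_M(T)$ sending an object $L' \in \underline{\PicS}(T)$, i.e.\ a line bundle $L'$ on $T$ together with a trivialization $\psi' \colon L'|_{T_\bQ} \xrightarrow{\simeq} \cO_{T_\bQ}$, and $(L, \phi) \in \underline{\PicS}_M(T)$ to $(L' \otimes L,\ \psi' \otimes \phi)$. This is natural in $T$, and so $\underline{\PicS}_M$ is a pseudotorsor over the symmetric monoidal stack $\underline{\PicS}$; analogously, $\underline{\bG}_{m,s}$ is a pseudotorsor over the abelian-group-valued sheaf $\underline{\bG}_m$.

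First suppose $\underline{\PicS}_M(X) \neq \emptyset$ and pick $(L_0, \phi_0) \in \underline{\PicS}_M(X)$. Then the torsor action gives an equivalence of groupoids
\[
\underline{\PicS}(X) \xrightarrow{\simeq} \underline{\PicS}_M(X), \quad L' \mapsto (L' \otimes L_0,\ \psi' \otimes \phi_0),
\]
and similarly an equivalence on $Y$ via the restriction $f^*(L_0,\phi_0)$. These equivalences intertwine the pullback $f^*$ on either side, so that Proposition~\ref{prop:our-functors-under-thickenings} transports directly to the desired equivalence $\underline{\PicS}_M(X) \otimes \bQ \simeq \underline{\PicS}_M(Y) \otimes \bQ$.

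It remains to handle the possibility that $\underline{\PicS}_M(X)$ is empty while $\underline{\PicS}_M(Y)$ is not (the reverse implication is automatic by restriction). Interpreting $\underline{\PicS}_M$ through Lemma~\ref{lemma:kernel-of-Picard-groupoids-in-derived-categories} and the torsor description, the obstruction to lifting a given $(L_Y, \phi_Y) \in \underline{\PicS}_M(Y)$ to an object over $X$ lies in a cohomology group of $1 + \cI$ (where $\cI$ is the ideal of $Y$ in $X$) glued together with $1 + \cI_\bQ$ in exactly the way controlled by the cone $C$ appearing in the proof of Proposition~\ref{prop:our-functors-under-thickenings}. Since that proof shows $C \otimes \bQ = 0$, this obstruction is killed after inverting integers, and so any object of $\underline{\PicS}_M(Y)$ lifts to $X$ after $\otimes \bQ$; the previous step then applies. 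The $\underline{\bG}_{m,s}$ case is strictly easier and follows by the same argument replacing groupoids by sets, $\underline{\PicS}$ by $\underline{\bG}_m$, and the derived description by the first part of Lemma~\ref{lemma:kernel-of-Picard-groupoids-in-derived-categories}. The main obstacle is making the emptyness/liftability step rigorous within the derived framework, but this amounts to an application of $C \otimes \bQ = 0$ to the torsor class rather than to a specific object.
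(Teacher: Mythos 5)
Your proposal is correct and takes essentially the same approach as the paper: both arguments reduce the twisted statement to Proposition \ref{prop:our-functors-under-thickenings} by trivialising the $\underline{\PicS}$-pseudotorsor $\underline{\PicS}_M$ against an object of $\underline{\PicS}_{M^m}(X)$ for some $m\geq 1$, whose existence is exactly what the vanishing $C \otimes \bQ = 0$ buys. The only thing left implicit in your "obstruction" step is the precise mechanism the paper uses: the isomorphism of cones is fed into Lemma \ref{lem:octahedral} to produce the exact sequence $\Pic(X)\otimes\bQ \to \Pic(Y)\otimes\bQ \oplus \Pic(X_{\bQ})\otimes\bQ \to \Pic(Y_{\bQ})\otimes\bQ$, whose exactness at the middle term is your liftability claim.
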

\noindent This can be formally seen as follows. By the above proof, we get that
\begin{center}
\begin{tikzcd}
R\Gamma(X,\cO^*_X) \otimes \bQ \arrow{r} \arrow{d} & R\Gamma(Y,\cO^*_{Y}) \otimes \bQ \arrow{d}\\
R\Gamma(X_{\bQ},\cO^*_{X_{\bQ}}) \otimes \bQ \arrow{r} & R\Gamma(Y_{\bQ},\cO^*_{Y_{\bQ}}) \otimes \bQ
\end{tikzcd}
\end{center}
is a homotopy pullback square in the derived category of abelian groups regarded as a stable infinity category. The restriction to strictly symmetric monoidal groupoids corresponds, by Dold-Kan, to applying truncation to simplicial abelian groups. Since truncation is right adjoint, it preserves limits, which shows that the application of $\tau_{\leq 0}(-[1])$ keeps the diagram Cartesian. We can now conclude by Lemma \ref{lem:cartiesianity-checked-by-fibres}. Below, we give an elementary proof.
\begin{proof}
We start with the proof of the first part. By replacing $S$ by $X$, we can assume that $S=X$. Note that, by Lemma \ref{lem:cartiesianity-checked-by-fibres}, we are essentially showing that
\begin{center}
\begin{tikzcd}
\PicS(X) \arrow{r} \arrow{d} & \PicS(Y) \arrow{d}\\
\PicS(X_{\bQ}) \arrow{r} & \PicS(Y_{\bQ})
\end{tikzcd}
\end{center}
is Cartesian up to tensoring by $\bQ$. 

First, suppose that $\underline{\PicS}_M(Y) \otimes \bQ = \emptyset$. Then $\underline{\PicS}_M(X) \otimes \bQ = \emptyset$, and the statement of the corollary is proven. Thus, we may pick $(L_Y, \sigma) \in \underline{\PicS}_M(Y)$. We claim that it is enough to show that $(L_Y,M)$ is equal to the image of some line bundle $L' \in \pi_0(\PicS(X)) = \Pic(X)$ under the map 
\[
\Pic(X) \to \Pic(Y) \oplus \Pic(X_{\bQ}),
\]
up to replacing our line bundles by some multiples. Indeed, if that is the case, then tensoring by $L'$ and $f^*L'$ induce compatible isomorphisms of $\underline{\PicS}(X)$ with $\underline{\PicS}_M(X)$ and $\underline{\PicS}(Y)$ with $\underline{\PicS}_M(Y)$, respectively. Thus we can conclude by Proposition \ref{prop:our-functors-under-thickenings}.

To show the claim, note that the proof of Proposition \ref{prop:our-functors-under-thickenings}  gives that
\[
\cone(R\Gamma(X,\cO^*_X) \to R\Gamma(X_{\bQ},\cO^*_{X_{\bQ}})) \otimes \bQ \simeq \cone(R\Gamma(Y, \cO^*_{Y}) \to R\Gamma(Y_{\bQ}, \cO^*_{Y_{\bQ}})) \otimes \bQ,
\]
and so by Lemma \ref{lem:octahedral} we have the following exact triangle
\[
R\Gamma(X,\cO^*_X) \otimes \bQ \to R\Gamma(Y,\cO^*_{Y}) \otimes \bQ \oplus R\Gamma(X_{\bQ},\cO^*_{X_{\bQ}}) \otimes \bQ \to  R\Gamma(Y_{\bQ},\cO^*_{Y_{\bQ}}) \otimes \bQ \xrightarrow{+1}.
\]
This induces an exact sequence
\[
H^1(X,\cO^*_X) \otimes \bQ \to H^1(Y,\cO^*_{Y}) \otimes \bQ \oplus H^1(X_{\bQ},\cO^*_{X_{\bQ}}) \otimes \bQ \to  H^1(Y_{\bQ},\cO^*_{Y_{\bQ}}) \otimes \bQ,
\]
which identifies with
\[
\Pic(X) \otimes \bQ \to \Pic(Y) \otimes \bQ \oplus \Pic(X_{\bQ}) \otimes \bQ \to \Pic(Y_{\bQ}) \otimes \bQ.
\]
This concludes the proof as $(L_Y,M)$ lies in the kernel of the second map, and so is equal to the image of some $L' \in \Pic(X) \otimes \bQ$ under the first one.

The second part follows from the short exact sequence
\[
0 \to H^0(X,\cO^*_X) \otimes \bQ \to H^0(Y,\cO^*_{Y}) \otimes \bQ \oplus H^0(X_{\bQ},\cO^*_{X_{\bQ}}) \otimes \bQ \to  H^0(Y_{\bQ},\cO^*_{Y_{\bQ}}) \otimes \bQ
\]
which exists by the above paragraph.
\end{proof}
\noindent We emphasise here that we could not argue directly that $L'$ pullbacks to $(L_Y, M, \sigma)$, since it is not clear that for our chosen $L'$, the isomorphism $\sigma$ agrees, up to equivalence, with the canonical isomorphism $(L'|_Y)|_{Y_{\bQ}} \simeq (L'|_{X_{\bQ}})|_{Y_{\bQ}}$.

\begin{remark}
Corollary \ref{cor:our-functors-under-thickenings-twisted} was proven in \cite[Theorem 1.7]{witaszek2020keels} under some slightly different (but essentially equivalent) assumptions. In the old proof, however, it was necessary to leave the category of Noetherian schemes even when treating the simplest varieties. We believe that our new proof provides a better understanding of the considered problem. 
\end{remark}

\begin{lemma} \label{lemma:Gm-for-geometrically-connected-fibres} Let $f \colon Y \to X$ be a proper surjective morphism of Noetherian schemes with geometrically connected fibres. Set $s \in \bG_m(X_{\bQ})$. Then $f^* \colon \underline{\bG}_{m,s}(X) \otimes \bQ \to \underline{\bG}_{m,s}(Y) \otimes \bQ$ is an isomorphism.
\end{lemma}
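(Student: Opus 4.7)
The plan is to apply Stein factorisation to write $f$ as a composition $Y \xrightarrow{h} X' \xrightarrow{g} X$, where $X' = \underline{\Spec}(f_*\cO_Y)$, the morphism $h$ satisfies $h_*\cO_Y = \cO_{X'}$, and $g$ is finite. Since $f$ has geometrically connected fibres, so does $g$; being finite, its geometric fibres are singletons, so $g$ is a finite universal homeomorphism (by \cite[Tag 0A4X]{stacks-project}). It therefore suffices to establish the conclusion for $h$ and for $g$ separately.

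For the map $h$, the equality $h_*\cO_Y = \cO_{X'}$ yields an iso $H^0(X', \cO_{X'}) \xrightarrow{\simeq} H^0(Y, \cO_Y)$; a standard argument using uniqueness of inverses upgrades this to an iso $\bG_m(X') \xrightarrow{\simeq} \bG_m(Y)$. By flat base change along $\Spec \bQ \to \Spec \bZ$ we have $(h_\bQ)_*\cO_{Y_\bQ} = \cO_{X'_\bQ}$, so the analogous iso holds after base change to $\bQ$. Passing to the fibres defining $\underline{\bG}_{m,s}$ and then tensoring with $\bQ$ yields the desired iso $\underline{\bG}_{m,s}(X') \otimes \bQ \xrightarrow{\simeq} \underline{\bG}_{m,s}(Y) \otimes \bQ$.

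For the map $g$, Corollary \ref{cor:our-functors-under-thickenings-twisted} applied to the thickenings $X_{\red} \hookrightarrow X$ and $X'_{\red} \hookrightarrow X'$ lets us reduce to the case where $X$ and $X'$ are reduced. Consider the map of left-exact sequences
\[
0 \to \underline{\bG}_{m,s}(-) \to \bG_m(-) \to \bG_m(-_\bQ)
\]
evaluated on $X$ and $X'$, tensored with $\bQ$ (which, being flat over $\bZ$, preserves left-exactness). The right-hand vertical is an iso: since $X_\bQ, X'_\bQ$ are reduced (reducedness is preserved under localisation) and $g_\bQ$ is a finite universal homeomorphism of reduced characteristic-zero schemes, it is an iso. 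For the middle vertical, injectivity comes from the fact that $\cO_X \hookrightarrow g_*\cO_{X'}$ in the reduced case. Surjectivity after $\otimes \bQ$ reduces to showing $\bG_m(X')/\bG_m(X)$ is torsion: for each $u \in \bG_m(X')$ there exists $N \geq 1$ with $u^N \in \bG_m(X)$, obtained by point-wise analysis using purely inseparable residue field extensions at positive-characteristic points (where some $p^n$-th power lies in the image) together with the observation that $g$ is already an iso at characteristic-zero points. A five-lemma diagram chase then gives that the left-hand vertical is an iso, completing the treatment of the $g$-case.

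The main technical difficulty is the torsion claim for the cokernel of $\bG_m(X) \to \bG_m(X')$. The argument splits according to residue characteristic: at characteristic-zero points the map is already iso by reducedness, while at residue characteristic $p > 0$ the purely inseparable nature of the residue field extension provides the requisite $p^n$-torsion. Only element-wise torsion is needed (not a global exponent bound), which is precisely what is killed by $\otimes \bQ$.
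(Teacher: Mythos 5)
Your Stein factorisation, the treatment of the contraction part $h$ via $h_*\cO_Y=\cO_{X'}$, and the reduction to reduced schemes all agree with the paper. The gap is in the finite universal homeomorphism case, and it is genuine: the two claims driving your five-lemma chase are false. A finite universal homeomorphism of reduced characteristic-zero schemes need \emph{not} be an isomorphism; the normalisation $\bA^1_{\bQ} \to \Spec \bQ[t^2,t^3]$ of the cuspidal cubic is finite, surjective, and universally injective with trivial residue field extensions, hence a universal homeomorphism, but not an isomorphism. Worse, the cokernel of $\bG_m(X)\to\bG_m(X')$ need not be torsion. Take $A=\bQ[t^2,t^3][\tfrac{1}{t^2-1}]\subseteq B=\bQ[t][\tfrac{1}{t^2-1}]$: then $\Spec B\to\Spec A$ is a finite universal homeomorphism of reduced schemes, $B^*=\bQ^*\cdot(t-1)^{\bZ}(t+1)^{\bZ}$, whereas a unit $c(t-1)^a(t+1)^b$ lies in $A$ exactly when its linear coefficient $c(-1)^{a-1}(a-b)$ vanishes, so $A^*=\bQ^*\cdot(t^2-1)^{\bZ}$ and $B^*/A^*\cong\bZ$. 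No power of $t-1$ lies in the image, so your middle vertical is not surjective after $\otimes\,\bQ$ and the diagram chase collapses. The root error is the assertion that ``$g$ is already an iso at characteristic-zero points'': triviality of purely inseparable residue field extensions in characteristic zero gives a bijection on points, not an isomorphism of schemes, and the unit-group defect of a universal homeomorphism lives in the conductor direction, which in characteristic zero is (an extension of pieces of) a $\bQ$-vector space and hence torsion-free --- precisely the part that $\otimes\,\bQ$ cannot kill.

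The paper's proof is built to avoid exactly this. After the same reductions (and disposing of the purely positive characteristic case by Frobenius, the one setting where your torsion heuristic is correct), it invokes the Milnor/conductor square of Lemma \ref{lemma:Milnor_Gm}: for reduced schemes, $\bG_m(X)=\bG_m(Y)\times_{\bG_m(D)}\bG_m(C)$ holds \emph{on the nose}, where $C\subseteq X$ and $D\subseteq Y$ are the conductor subschemes. Passing to $\underline{\bG}_{m,s}\otimes\bQ$ preserves Cartesianity, and Noetherian induction on the strictly smaller closed subscheme $C$ (with $D\to C$ again a finite universal homeomorphism) gives $\underline{\bG}_{m,s}(D)\otimes\bQ\simeq\underline{\bG}_{m,s}(C)\otimes\bQ$, which collapses the fibre product to the desired isomorphism. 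In other words, the characteristic-zero defect exhibited above is eliminated not by inverting integers but by the homotopy-fibre construction $\underline{\bG}_{m,s}$ itself --- in the example, the offending unit $t-1$ does not restrict to $1$ on the rational fibre and so never enters $\underline{\bG}_{m,s}$. If you want to salvage a five-lemma-style argument, you must run it for the conductor square rather than for the map $\bG_m(X)\to\bG_m(X')$ alone; as written, the torsion claim is the step that fails.
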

\begin{proof}
If $f_*  \cO_Y = \cO_X$, then $f_* \cO^*_Y = \cO^*_X$ (here it is implicitly hidden that every invertible element of a ring has a unique inverse), and so $\bG_{m,s}(X)=\bG_{m,s}(Y)$. Hence, by Stein factorisation, we can assume that $f$ is a finite universal homeomorphism.

We may assume that $X$ and $Y$ are connected. Further, by Corollary \ref{cor:our-functors-under-thickenings-twisted}, we may assume that $X$ and $Y$ are reduced. If they are purely of positive characteristic $p>0$, then the result is standard ($f$ factors through a power of Frobenius, and so $\bG_m(X)[1/p] \simeq \bG_m(Y)[1/p]$). Thus, we may assume that $X_{\bQ} \neq \emptyset$, and $f$ is an isomorphism over some open subset of $X$. In particular, the conductors $C\subseteq X$ and $D \subseteq Y$ of $f$ are strict subsets (cf.\ Subsection \ref{ss:conductors}). By Lemma \ref{lemma:Milnor_Gm}, the following diagram is Cartesian
\begin{center}
		\begin{tikzcd}
			{\bG}_m(X) \arrow{r} \arrow{d} & {\bG}_m(Y) \arrow{d} \\
			{\bG}_m(C) \arrow{r} & {\bG}_m(D), 
		\end{tikzcd}
\end{center}
and so is (cf.\ Lemma \ref{lem:homotopy-fibre-of-t-stack-is-t-stack})
\begin{center}
		\begin{tikzcd}
			\underline{\bG}_{m,s}(X) \otimes \bQ \arrow{r} \arrow{d} & \underline{\bG}_{m,s}(Y) \otimes \bQ \arrow{d} \\
			\underline{\bG}_{m,s}(C) \otimes \bQ \arrow{r} & \underline{\bG}_{m,s}(D) \otimes \bQ.
		\end{tikzcd}
\end{center}
The morphism $D \to C$ is a finite universal homeomorphism, thus by Noetherian induction, we may assume that $\underline{\bG}_{m,s}(D) \otimes \bQ \simeq \underline{\bG}_{m,s}(C) \otimes \bQ$. Hence $\underline{\bG}_{m,s}(Y) \otimes \bQ \simeq \underline{\bG}_{m,s}(X) \otimes \bQ$.
\end{proof}


\begin{proposition}  \label{prop:Gm-blow-up-squares}
Let $f \colon Y \to X$ be a proper morphism of Noetherian schemes which is an isomorphism outside of a closed subset $Z \subseteq X$ with preimage $E \subseteq Y$. Set $s \in \bG_m(X_{\bQ})$. Then
\begin{center}
		\begin{tikzcd}
			\underline{\bG}_{m,s}(X) \arrow{r} \arrow{d} & \underline{\bG}_{m,s}(Y) \arrow{d} \\
			\underline{\bG}_{m,s}(Z) \arrow{r} & \underline{\bG}_{m,s}(E),
		\end{tikzcd}
\end{center}
is Cartesian up to tensoring by $\bQ$.
\end{proposition}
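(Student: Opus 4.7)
The plan is to combine Stein factorisation, a Milnor patching square for units, and thickening invariance, all of which are already available in the paper.

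First, Stein factorise $f$ as $Y \xrightarrow{g} Y' \xrightarrow{f'} X$. Then $f'$ is finite and an isomorphism outside $Z$; setting $E' \coloneq (f')^{-1}(Z)$, we have $g^{-1}(E') = E$, and in fact $E = Y \times_{Y'} E'$. The morphism $g$ is proper surjective with geometrically connected fibres, and since the property of having geometrically connected fibres is preserved under base change, so does $g|_E \colon E \to E'$ (its fibres over a point of $E'$ coincide with the corresponding fibres of $g$, which lie entirely in $E$). Applying Lemma \ref{lemma:Gm-for-geometrically-connected-fibres} to both $g$ and $g|_E$ gives isomorphisms $\underline{\bG}_{m,s}(Y') \otimes \bQ \simeq \underline{\bG}_{m,s}(Y) \otimes \bQ$ and $\underline{\bG}_{m,s}(E') \otimes \bQ \simeq \underline{\bG}_{m,s}(E) \otimes \bQ$. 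Hence the square associated to $g$ is trivially Cartesian after $\otimes \bQ$, and stacking it horizontally with the square for $f'$ reduces the proposition to the case where $f$ is finite.

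Assuming now that $f$ is finite and an isomorphism outside $Z = V(\cI)$, Lemma \ref{lemma:Milnor_Gm2} produces some $n \gg 0$ for which the $\cO^*$-square relating $X$, $Y$, $Z_n \coloneq V(\cI^n)$, and $E_n \coloneq f^{-1}(Z_n)$ is Cartesian. Taking global sections (which preserves limits) yields a Cartesian square of abelian groups for $\bG_m$. The same argument applied to $Y_\bQ \to X_\bQ$, which is still finite and an isomorphism outside $Z_\bQ$, produces a Cartesian $\bG_m$-square for $(X_\bQ, Y_\bQ, (Z_n)_\bQ, (E_n)_\bQ)$; by enlarging $n$ if necessary and using that $(Z_n)_\bQ = (Z_\bQ)_n$, one can take the same $n$ in both. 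Since a compatible pair of Cartesian squares yields a Cartesian square upon taking fibres of the vertical comparison maps over a chosen element, fibering over $s|_{X_\bQ}$ gives
\[
\underline{\bG}_{m,s}(X) \simeq \underline{\bG}_{m,s}(Y) \times_{\underline{\bG}_{m,s}(E_n)} \underline{\bG}_{m,s}(Z_n).
\]

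Finally, the closed immersions $Z \hookrightarrow Z_n$ and $E \hookrightarrow E_n$ are thickenings, so Corollary \ref{cor:our-functors-under-thickenings-twisted} provides restriction isomorphisms $\underline{\bG}_{m,s}(Z_n) \otimes \bQ \simeq \underline{\bG}_{m,s}(Z) \otimes \bQ$ and $\underline{\bG}_{m,s}(E_n) \otimes \bQ \simeq \underline{\bG}_{m,s}(E) \otimes \bQ$, compatible with the functoriality used in the square above. Substituting these into the Cartesian square from the finite case yields the claimed Cartesianity for $(X, Y, Z, E)$ after $\otimes \bQ$. The main subtlety is the verification in the first step that the restriction of $g$ to $E$ retains geometrically connected fibres; everything else is an assembly of the Stein factorisation, Milnor patching, and thickening invariance results already established.
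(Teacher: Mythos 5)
Your argument follows the paper's proof essentially step for step: Stein factorisation plus Lemma \ref{lemma:Gm-for-geometrically-connected-fibres} to reduce to $f$ finite, the Milnor-square Lemma \ref{lemma:Milnor_Gm2} to get Cartesianity for $\bG_m$ on the thickened square $(X,Y,Z_n,E_n)$, passage to fibres over $s|_{X_{\bQ}}$, and Corollary \ref{cor:our-functors-under-thickenings-twisted} to trade $Z_n$, $E_n$ back for $Z$, $E$. Your extra care about the geometrically connected fibres of $g|_E$ and about why fibering a map of Cartesian squares stays Cartesian matches what the paper leaves as a ``cf.''

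There is one omission: Lemma \ref{lemma:Milnor_Gm2} is stated only for finite maps of \emph{reduced} Noetherian schemes (its proof uses $\cO_X = \ker(\cO_{Z_n}\oplus\cO_{X'}\to\cO_{Z'_n})$, which needs $X$ reduced, and the conductor of $Y\to X'$, which needs an inclusion of rings), so you cannot invoke it directly after the Stein reduction. The paper first applies Corollary \ref{cor:our-functors-under-thickenings-twisted} to replace $X$, $Y$, $Z$, $E$ by their reductions before bringing in $\cI^n$; this is the same thickening-invariance statement you already use at the end for $Z\hookrightarrow Z_n$, so the repair is immediate, but as written the step ``Lemma \ref{lemma:Milnor_Gm2} produces some $n\gg 0$'' is not justified for non-reduced $X$ and $Y$.
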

\begin{proof}
By Stein factorisation and Lemma \ref{lemma:Gm-for-geometrically-connected-fibres}, we may assume that $f$ is a finite morphism (here we use that the restriction of the connected fibres part of the Stein factorisation of $Y \to X$ to $E$ has geometrically connected fibres).

Let $\cI$ be the ideal defining $Z \subseteq X$ and Let $Z_n$ be the subscheme defined by $\cI^n$ for $n \in \bN$. By Corollary \ref{cor:our-functors-under-thickenings-twisted} we may assume that $X$ and $Y$ are reduced, and we can replace $Z$ by $Z_n$ and $E$ by the scheme theoretic pullback of $Z_n$. Hence the above diagram is Cartesian for $\bG_m$ itself by Lemma \ref{lemma:Milnor_Gm2}, and so it is Cartesian for $\underline{\bG}_{m,s} \otimes \bQ$ as well (cf.\ the argument of Lemma \ref{lem:homotopy-fibre-of-t-stack-is-t-stack}). \qedhere
\end{proof}

\begin{theorem} \label{thm:fibre-of-Gm-is-h-sheaf}
Let $S$ be a Noetherian scheme and let $Sch/S$ be the category of schemes of finite type over $S$. Fix $s \in \bG_m(S_{\bQ})$. 
Then $\underline{\bG}_{m,s} \otimes \bQ$ is a sheaf for the h-topology.
\end{theorem}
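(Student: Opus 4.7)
The plan is to apply the criterion of Theorem \ref{thm:BS-criterion-for-h-sheaves} (due to Bhatt--Scholze), which reduces the h-sheaf property to fppf-sheafiness together with the excision/blow-up square condition. Both inputs are already essentially present in the preceding material; the task is just to verify them for $\underline{\bG}_{m,s} \otimes \bQ$.

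First I would check that $\underline{\bG}_{m,s}$ is a sheaf for the fppf topology. Since $\bG_m$ is an fppf sheaf (by \cite[Tag 04WN]{stacks-project}), and $s \in \bG_m(S_{\bQ})$ is a fixed element, the pseudofunctor $\underline{\bG}_{m,s}$ is the fibre over $s|_{(-)_{\bQ}}$ of the natural map $\bG_m(-) \to \bG_m((-)_{\bQ})$; this is an fppf sheaf by the sheaf version of Lemma \ref{lem:homotopy-fibre-of-t-stack-is-t-stack}. To pass to $\underline{\bG}_{m,s} \otimes \bQ$, I would use that tensoring abelian groups with $\bQ = \varinjlim_n \bZ[1/n]$ is a filtered colimit, and filtered colimits commute with the finite limits (equalisers of the two maps on a finite cover) that express the sheaf axiom; since we restricted to the site of finite type schemes over a Noetherian $S$ (so coverings involve only finitely many terms), the sheaf property is preserved.

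Second, I would verify condition (2) of Theorem \ref{thm:BS-criterion-for-h-sheaves}: for any proper surjective $f\colon Y\to X$ in $Sch/S$ with $X$ affine that is an isomorphism outside a closed subset $Z \subseteq X$ with preimage $E \subseteq Y$, the square
\[
\begin{tikzcd}
\underline{\bG}_{m,s}(X)\otimes\bQ \arrow{r}\arrow{d} & \underline{\bG}_{m,s}(Y)\otimes\bQ \arrow{d}\\
\underline{\bG}_{m,s}(Z)\otimes\bQ \arrow{r} & \underline{\bG}_{m,s}(E)\otimes\bQ
\end{tikzcd}
\]
is Cartesian (in the category of sets). This is precisely the content of Proposition \ref{prop:Gm-blow-up-squares}, which is already established.

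Combining these two points, Theorem \ref{thm:BS-criterion-for-h-sheaves} delivers that $\underline{\bG}_{m,s} \otimes \bQ$ is an h-sheaf. There is essentially no main obstacle remaining: the nontrivial blow-up square input was handled in Proposition \ref{prop:Gm-blow-up-squares} (the Milnor-square computation together with the thickening invariance from Corollary \ref{cor:our-functors-under-thickenings-twisted} and the Stein-factorisation reduction of Lemma \ref{lemma:Gm-for-geometrically-connected-fibres}), and the fppf compatibility with $\otimes \bQ$ is a soft colimit argument.
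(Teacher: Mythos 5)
Your proposal is correct and follows exactly the paper's own argument: apply the Bhatt--Scholze criterion (Theorem \ref{thm:BS-criterion-for-h-sheaves}), with the fppf-sheaf condition supplied by Lemma \ref{lem:homotopy-fibre-of-t-stack-is-t-stack} (plus the standard observation that tensoring with $\bQ$, being a filtered colimit, preserves the finite-limit sheaf condition on this Noetherian site) and the blow-up square condition supplied by Proposition \ref{prop:Gm-blow-up-squares}. No further comment is needed.
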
 
\begin{proof}
This follows from Theorem \ref{thm:BS-criterion-for-h-sheaves} and Proposition \ref{prop:Gm-blow-up-squares} as $\bG_{m,s}$ (and so $\underline{\bG}_{m,s} \otimes \bQ$) is a sheaf for the fppf topology (cf.\ Lemma \ref{lem:homotopy-fibre-of-t-stack-is-t-stack}).
\end{proof}

\begin{proposition} \label{prop:fibre-of-Pic-blow-up-squares}
Let $X$ be a finite type scheme over a Noetherian base scheme $S$ and let $f \colon Y \to X$ be a proper map which is an isomorphism outside of a closed subset $Z \subseteq X$ with preimage $E \subseteq Y$. Fix a line bundle $M$ on $S_{\bQ}$. Then
\begin{center}
		\begin{tikzcd}
			\underline{\PicS}_M(X) \arrow{r} \arrow{d} & \underline{\PicS}_M(Y) \arrow{d} \\
			\underline{\PicS}_M(Z) \arrow{r} & \underline{\PicS}_M(E),
		\end{tikzcd}
\end{center}
is Cartesian up to tensoring by $\bQ$.
\end{proposition}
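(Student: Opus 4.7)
The plan is to follow the strategy of Proposition \ref{prop:Gm-blow-up-squares}, with two main adaptations: we use Lemma \ref{lemma:blow-up-square-Pic} in place of the Milnor-square argument of Lemma \ref{lemma:Milnor_Gm2} to establish essential surjectivity, and Proposition \ref{prop:Gm-blow-up-squares} itself supplies the full faithfulness that is a further ingredient of the $2$-categorical Cartesianity.

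First I would reduce to the case where $X$ is affine, using the fppf stack property of $\underline{\PicS}_M$ (Lemma \ref{lem:homotopy-fibre-of-t-stack-is-t-stack}). Then I would reduce to the case where $f$ is a blow-up of $X$ along an ideal sheaf supported in $Z$. The latter is done via Chow's lemma, which produces a proper birational $g \colon \tilde Y \to Y$, iso outside $E$, such that both $g$ and $fg$ are realised as blow-ups along suitable ideal sheaves. A two-out-of-three pasting argument for Cartesian squares of groupoids then deduces Cartesianity for $(f, X, Z)$ from those for the two blow-up cases $(g, Y, E)$ and $(fg, X, Z)$: writing $\tilde E = g^{-1}(E)$, the equivalence $\underline{\PicS}_M(Y) \simeq \underline{\PicS}_M(\tilde Y) \times_{\underline{\PicS}_M(\tilde E)} \underline{\PicS}_M(E)$ collapses $\underline{\PicS}_M(Y) \times_{\underline{\PicS}_M(E)} \underline{\PicS}_M(Z)$ into $\underline{\PicS}_M(\tilde Y) \times_{\underline{\PicS}_M(\tilde E)} \underline{\PicS}_M(Z)$, which in turn is equivalent to $\underline{\PicS}_M(X)$ by the blow-up case for $fg$.

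In the blow-up case, I would reduce further to the $\cI$-adically complete affine situation (by a faithfully-flat descent along $X \to \Spec \widehat{A}$, combined with the thickening invariance of Corollary \ref{cor:our-functors-under-thickenings-twisted}), and then apply Lemma \ref{lemma:blow-up-square-Pic} to obtain essential surjectivity of $\PicS(X) \to \PicS(Y) \times_{\PicS(E_n)} \PicS(Z_n)$ for $n \gg 0$. Corollary \ref{cor:our-functors-under-thickenings-twisted} then replaces $Z_n$ by $Z$ and $E_n$ by $E$ after tensoring with $\bQ$, yielding essential surjectivity for $\underline{\PicS}_M \otimes \bQ$. Full faithfulness, i.e.\ the check on automorphism groupoids, reduces via the $\bG_m$-torsor structure on isomorphism sheaves of line bundles to the Cartesianity of the $\underline{\bG}_m$-square, which is precisely Proposition \ref{prop:Gm-blow-up-squares}.

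The main obstacle is the reduction to the blow-up case: both the Chow's lemma application (in arranging that both $g$ and $fg$ are blow-ups along ideals with the appropriate support) and the two-out-of-three pasting require careful bookkeeping. A secondary subtlety is the $\cI$-adic completion descent step, where one needs to ensure that passing to $\Spec \widehat{A}$ does not alter the relevant Picard groupoid up to $\bQ$.
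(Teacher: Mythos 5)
Your proposal matches the paper's proof essentially step for step: full faithfulness via Proposition \ref{prop:Gm-blow-up-squares} applied to the Isom-sheaves of pairs of objects, essential surjectivity in the blow-up case via Zariski and Beauville--Laszlo reduction to the $\cI$-adically complete affine setting followed by Lemma \ref{lemma:blow-up-square-Pic} and Corollary \ref{cor:our-functors-under-thickenings-twisted}, and a two-out-of-three pasting of Cartesian squares to reduce the general proper case to blow-ups. The only cosmetic differences are that the domination result you need is the Raynaud--Gruson flattening statement (\stacksproj{081T}) rather than Chow's lemma, and that the paper realises the map $g$ not as a blow-up of $Y$ on the nose but as a thickening followed by the blow-up of $Y$ along $E$ (equivalently, a blow-up along an ideal merely supported on $E$), with Corollary \ref{cor:our-functors-under-thickenings-twisted} absorbing the resulting discrepancy in the centres.
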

\begin{proof}
By replacing $S$ by $X$, we may assume that $S=X$.

The full faithfullness of $\underline{\PicS}_M(X)\to \underline{\PicS}_M(Y)  \times_{\underline{\PicS}_M(E)} \underline{\PicS}_M(Z)$ up to tensoring by $\bQ$ follows from Proposition \ref{prop:Gm-blow-up-squares} applied to the Hom-bundles. More precisely, we need to check that for every $\cL_1, \cL_2 \in \underline{\PicS}_M(X)$ the functor $\cF \colon W \mapsto \Hom(\pi^*\cL_1, \pi^*\cL_2)$ on schemes over $X$ (here $\pi$ is the projection to $X$) satisfies that $\cF(X) \to \cF(Y) \times_{\cF(E)} \cF(Z)$ is an isomorphism up to tensoring by $\bQ$. Let $\cL_i = (L_i, \phi_i)$ for a line bundle $L_i$ on $X$ and an isomorphism $\phi_i \colon L_i|_{X_{\bQ}} \xrightarrow{\simeq} M$. It is enough to check the above assertion affine locally, so we can assume that $L_1$ and $L_2$ are trivial. Then $\cF$ is isomorphic to $\underline{\bG}_{m,s}$, where $s = \phi^{-1}_2 \circ \phi_1 \in \bG_m(X_{\bQ})$. Hence, Proposition \ref{prop:Gm-blow-up-squares} applies here.\\

Therefore, we just need to verify that $\underline{\PicS}_M(X) \to \underline{\PicS}_M(Y) \times_{\underline{\PicS}_M(E)} \underline{\PicS}_M(Z)$ is essentially surjective up to tensoring by $\bQ$. First, we assume that $Y$ is the blow-up of $X$ along $Z$. By Zariski descent, we may further assume that $X$ is affine and by flatness of completions and Beauville-Laszlo-type descent (see \cite[Tag 0AF0]{stacks-project}) that it is complete with respect to the ideal of $Z$. By Corollary \ref{cor:our-functors-under-thickenings-twisted}, we may replace $Z$ and $E$ by their thickenings, so that $\underline{\PicS}_M(X) \to \underline{\PicS}_M(Y) \times_{\underline{\PicS}_M(E)} \underline{\PicS}_M(Z)$ is essentially surjective up to tensoring by $\bQ$ by Lemma \ref{lemma:blow-up-square-Pic} (cf.\ the argument of Lemma \ref{lem:homotopy-fibre-of-t-stack-is-t-stack}). This concludes the proof of the proposition in this case. \\

In what follows, we reduce to the case of blow-up squares. By \cite[Corollary 5.7.12]{rg71} (also \cite[Tag 081T]{stacks-project}), there exists a blow-up $r \colon W \to X$ along a closed subscheme $Z'$  disjoint from $X \, \backslash\, Z$ which admits a factorisation through $g \colon W \to Y$. We may assume that $\Supp Z' = \Supp Z$ (see for example \cite[Tag 080A]{stacks-project}), and so that $Z'=Z$  by Corollary \ref{cor:our-functors-under-thickenings-twisted}. Set $G = g^{-1}(E)$.

Take $h \colon T \to Y$ to be the blow-up of $E=f^{-1}(Z)$. By the universal property of blow-ups over $Y$ (\cite[Tag 0806]{stacks-project}), the morphism $g \colon W \to Y$  factorises through $T$ yielding $W \xrightarrow{g'} T \xrightarrow{h} Y$.
We claim that $g' \colon W \to T$ is a thickening. 
\begin{center}
\begin{tikzcd}
W \arrow{r}{g'} \arrow[bend left = 35]{rr}{g} & T \arrow[dashed, bend left = 45]{l} \arrow{r}{h} & Y \arrow{r}{f} &  X.
\end{tikzcd}
\end{center} 
Indeed, by the universal property of blow-ups over $X$, we get a map $T \to W$ which composed $W \to T \to W$ with $g'$ yields an isomorphism (by the universal property of blow-ups over $X$ again). Hence $W \to T$ is a proper monomorphism, and so a closed embedding by \cite[Theorem 18.12.6]{EGAIV}. Since $g' \colon W \to T$ is an isomorphism outside of a Cartier divisor $h^{-1}(E)$, this means that $g'$ is dominant (\cite[Tag 07ZU]{stacks-project}), and so a thickening. 

By the above paragraph and Corollary \ref{cor:our-functors-under-thickenings-twisted}, we get that $\underline{\PicS}_M(W) \simeq \underline{\PicS}_M(T)$ and $\underline{\PicS}_M(G) \simeq \underline{\PicS}_M(h^{-1}(E))$ up to tensoring by $\bQ$. Now, in the following diagram
\begin{center}
\begin{tikzcd}
\underline{\PicS}_M(G)   &  \arrow{l} \underline{\PicS}_M(W)  \\
\underline{\PicS}_M(E)  \arrow{u} & \arrow{l} \underline{\PicS}_M(Y)  \arrow{u} \\ 
\underline{\PicS}_M(Z) \arrow{u} & \arrow{l} \underline{\PicS}_M(X), \arrow{u}
\end{tikzcd}
\end{center}
the big outer square and the upper square are Cartesian up to tensoring by $\bQ$ by the case of blow-up squares. Therefore, the lower  square is Cartesian up to tensoring by $\bQ$ by the $2$-out-of-$3$ property for fibre squares (a repeated application of \cite[Tag 02XD]{stacks-project}, see also Lemma \ref{lem:cartiesianity-checked-by-fibres}). \qedhere
\end{proof}
We warn the reader that in the proof of the above theorem it is essential to consider blow-ups which are not surjective. For example, if $Y= \bP^1 \cup \bP^1 \subseteq \bP^1 \times \bP^1$ is the union of the standard coordinate lines and $Y \to X = \bP^1$ is the projection onto the first factor, then $W \to X$ is the blow-up at $0 \in \bP^1$ which is just the identity. In this case, $W = T \to Y$ is a closed embedding which is the blow-up of $Y$ along the second irreducible component. 

\begin{proof}[Proof of Theorem \ref{thm:fibre-of-Pic-is-h-sheaf}]
This follows from Theorem \ref{thm:BS-criterion-for-h-sheaves} and Proposition \ref{prop:fibre-of-Pic-blow-up-squares} as $\PicS$ (and so $\underline{\PicS}_M \otimes \bQ$) is a stack in groupoids for the fppf topology (see Lemma \ref{lem:homotopy-fibre-of-t-stack-is-t-stack}).
\end{proof}

This gives a new proof of the following result.
\begin{theorem}[{\cite[Theorem 1.7]{witaszek2020keels}}]\label{thm:Pic-under-uh}
Let $f \colon Y \to X$ be a finite universal homeomorphism of Noetherian schemes. Then $\underline{\PicS}_M(X) \otimes \bQ \simeq \underline{\PicS}_M(Y) \otimes \bQ$ for every line bundle $M$ on $X_{\bQ}$ and the following diagram 
\begin{center}
\begin{tikzcd}
\PicS(X) \arrow{r}{f^*} \arrow{d} & \PicS(Y) \arrow{d} \\
\PicS(X_{\bQ}) \arrow{r} & \PicS(Y_{\bQ}),
\end{tikzcd}
\end{center}
is Cartesian up to tensoring by $\bQ$. 
\end{theorem}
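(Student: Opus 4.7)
The plan is to deduce Theorem \ref{thm:Pic-under-uh} directly from Theorem \ref{thm:fibre-of-Pic-is-h-sheaf}, invoking the slogan that h-stacks are insensitive to universal homeomorphisms.

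First, I would observe that a finite universal homeomorphism $f \colon Y \to X$ is proper and surjective, hence an h-cover; and as already noted in the remark preceding Theorem \ref{thm:BS-criterion-for-h-sheaves}, we have $(Y \times_X Y)_{\red} \simeq Y_{\red}$ and $(Y \times_X Y \times_X Y)_{\red} \simeq Y_{\red}$. For any stack in groupoids $\cF$ for the h-topology, this forces all face maps in the simplicial diagram computing descent along $\{f\}$ to become equivalences after applying $\cF$ (since reduction induces an equivalence on h-stack values). Consequently, the limit defining descent collapses and the descent condition degenerates to the single assertion $f^* \colon \cF(X) \xrightarrow{\simeq} \cF(Y)$.

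Second, I would apply this principle to $\cF = \underline{\PicS}_M \otimes \bQ$, which is a stack in groupoids for the h-topology by Theorem \ref{thm:fibre-of-Pic-is-h-sheaf}. This immediately yields the first claim, namely $\underline{\PicS}_M(X) \otimes \bQ \simeq \underline{\PicS}_M(Y) \otimes \bQ$.

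Finally, for the Cartesianity of the square, I would apply Lemma \ref{lem:cartiesianity-checked-by-fibres} to the diagram with the vertical maps being the restrictions to $X_{\bQ}$ and $Y_{\bQ}$. Cartesianity (up to tensoring by $\bQ$) is then equivalent to the statement that for every $M \in \PicS(X_{\bQ})$ the induced map
\[
\hofib_{M}(\PicS(X) \to \PicS(X_{\bQ})) \otimes \bQ \longrightarrow \hofib_{f^*M}(\PicS(Y) \to \PicS(Y_{\bQ})) \otimes \bQ
\]
is an equivalence. By the very definition of $\underline{\PicS}_M$, the two sides are $\underline{\PicS}_M(X) \otimes \bQ$ and $\underline{\PicS}_{f^*M}(Y) \otimes \bQ$, so this is exactly the first claim. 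I do not anticipate any real obstacle: all substantive content is packed into Theorem \ref{thm:fibre-of-Pic-is-h-sheaf}, and the deduction given here is purely formal, relying only on the descent condition for h-stacks at the cover $\{f\}$ and the formal Lemma \ref{lem:cartiesianity-checked-by-fibres}.
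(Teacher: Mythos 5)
Your proposal is correct and is essentially the paper's own proof: both deduce the statement from Theorem \ref{thm:fibre-of-Pic-is-h-sheaf} by observing that the fibre products $Y\times_X Y$ and $Y\times_X Y\times_X Y$ reduce to $Y_{\red}$, so the descent limit collapses to $\underline{\PicS}_M(Y)\otimes\bQ$, and then conclude Cartesianity via Lemma \ref{lem:cartiesianity-checked-by-fibres}. The only cosmetic difference is that the paper collapses the simplicial diagram by citing the thickening-invariance of $\underline{\PicS}_M\otimes\bQ$ (Corollary \ref{cor:our-functors-under-thickenings-twisted}), whereas you invoke the general reduction-invariance of h-stacks from the remark before Theorem \ref{thm:BS-criterion-for-h-sheaves}; both are valid.
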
 
\begin{proof}
Since $Y\times_X Y$ and $Y\times_X Y \times_X Y$ are thickenings of $Y$, we have that $\underline{\PicS}_M(X) \otimes \bQ \simeq \underline{\PicS}_M(Y) \otimes \bQ$ by Theorem \ref{thm:fibre-of-Pic-is-h-sheaf} and Corollary \ref{cor:our-functors-under-thickenings-twisted} for every $M \in \Pic(X_{\bQ})$. Hence, the theorem holds by Lemma \ref{lem:cartiesianity-checked-by-fibres}.
\end{proof}

\section{The proof of the main theorem}
The goal of this section is to prove Theorem \ref{thm:main-intro}. As explained in the introduction, the proof is split into three parts.

\subsection{Relatively trivial case}

\begin{proposition} \label{prop:PicS-ff}
Let $X$ be a scheme admitting a proper surjective morphism with geometrically connected fibres $\pi \colon X \to S$ to an excellent scheme $S$. Let $M$ be a line bundle on $S_{\bQ}$. Then $\pi^* \colon \underline{\PicS}_M(S) \otimes \bQ \to \underline{\PicS}_M(X) \otimes \bQ$ is fully faithful with the essential image consisting of all pairs $(L,\phi)$, where $L$ is a line bundle on $X$ such that $L|_{X_s}$ is torsion for every point $s \in S$ having positive characteristic residue field, and $\phi \colon L|_{X_{\bQ}} \simeq \pi_{\bQ}^*M$ is an isomorphism.
\end{proposition}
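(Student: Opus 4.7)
The plan is to separate the full faithfulness of $\pi^*$ from the essential-image characterisation, to reduce the former to the $\bG_m$-analogue of Lemma~\ref{lemma:Gm-for-geometrically-connected-fibres}, and to attack the latter by $h$-descent, flattening, and a known descent result for flat proper families with fibre-wise torsion line bundles.

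For full faithfulness I would identify morphisms between $(L_1,\phi_1), (L_2,\phi_2)\in\underline{\PicS}_M(S)$ with the isomorphisms $\alpha\colon L_1\simeq L_2$ whose restriction to $S_{\bQ}$ equals $\phi_2^{-1}\circ\phi_1$. Locally, where $L_1$ and $L_2$ are trivial, these form a torsor under $\underline{\bG}_{m,t}$ for a suitable $t\in\bG_m(S_{\bQ})$ determined by $\phi_2^{-1}\phi_1$. Lemma~\ref{lemma:Gm-for-geometrically-connected-fibres} gives $\underline{\bG}_{m,t}(S)\otimes\bQ\simeq\underline{\bG}_{m,t}(X)\otimes\bQ$, and globalising via the $h$-sheaf property of $\underline{\bG}_{m,t}\otimes\bQ$ (Theorem~\ref{thm:fibre-of-Gm-is-h-sheaf}) yields the full faithfulness of $\pi^*$ after $\otimes\bQ$.

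The forward direction of the essential-image description is automatic: $L^n|_{X_s}\simeq(\pi|_{X_s})^*(N^n|_{\Spec k(s)})$ is trivial, hence $L|_{X_s}$ is torsion. For the main direction, suppose $L|_{X_s}$ is torsion at every positive characteristic $s\in S$; the isomorphism $\phi\colon L|_{X_{\bQ}}\simeq\pi_{\bQ}^*M$ makes $L|_{X_s}$ trivial at every characteristic zero point, so $L$ is fibre-wise torsion throughout $X$. Having established full faithfulness above for every proper surjective map with geometrically connected fibres, $\underline{\PicS}_M\otimes\bQ$ --- an $h$-stack by Theorem~\ref{thm:fibre-of-Pic-is-h-sheaf} --- satisfies the hypothesis of Lemma~\ref{lem:essential-image-h-topology}, so essential surjectivity can be checked after an arbitrary proper surjective base change of $S$ (with $h$-descent handling étale-surjective steps). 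Using Theorem~\ref{thm:regular-alteration} followed by Raynaud--Gruson flattening (perhaps also replacing $X$ by a blow-up, compatibly via the $h$-stack property), the plan is to reduce to the situation where $\pi\colon X\to S$ is flat, $S$ is regular, and geometric connectedness of fibres is preserved.

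In this flat situation a descent result for fibre-wise torsion line bundles on flat proper families over $\bQ$-factorial bases (\stacksproj{0EXG}; cf.\ also Lemma~\ref{lem:descend-equidimensional-base-qfactorial} after normalising $X$) produces $L^n\simeq\pi^*N$ for some $n\geq 1$ and a line bundle $N$ on $S$. To upgrade this to an element of $\underline{\PicS}_{M^n}(S)\otimes\bQ$, I would restrict to $X_{\bQ}$: combining with $\phi^n$ one obtains $\pi_{\bQ}^*(N|_{S_{\bQ}})\simeq\pi_{\bQ}^*(M^n)$, which by full faithfulness of $\pi_{\bQ}^*$ on Picard groupoids (coming from geometrically connected fibres together with $\pi_{\bQ,*}\cO_{X_{\bQ}}=\cO_{S_{\bQ}}$, achievable after normalising $S$) descends to an isomorphism $\psi\colon N|_{S_{\bQ}}\simeq M^n$. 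Then $\pi^*(N,\psi)\simeq(L^n,\phi^n)$ exhibits $(L,\phi)$ in the essential image up to $\otimes\bQ$. The main obstacle I foresee is coherently tracking the trivialisation $\phi$ through the base changes, the flattening modifications, and the classical flat-descent step; the whole point of the $h$-stack formalism for $\underline{\PicS}_M\otimes\bQ$ --- packaging $\phi$ into the definition of objects --- is precisely to make this tracking automatic.
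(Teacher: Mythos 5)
Your full-faithfulness argument is exactly the paper's: identify Hom-sets locally with $\underline{\bG}_{m,s}$ and invoke Lemma \ref{lemma:Gm-for-geometrically-connected-fibres}. The overall reduction strategy for the essential image (Lemma \ref{lem:essential-image-h-topology}, flattening, normalisation of the base, then a codimension-one descent criterion, and finally descending $\phi$ via $(\pi_{\bQ})_*\cO^*_{X_{\bQ}}=\cO^*_{S_{\bQ}}$) is also the paper's route, though the paper only normalises $S$ rather than invoking Theorem \ref{thm:regular-alteration}, and it needs Lemma \ref{lem:essential-image-h-topology2} together with Noetherian induction applied to $E\times_S X\to E$ to pass from the (non-flat) fibre product $X\times_S S'$ to the flat strict transform $X'$ --- a point you gesture at (``perhaps also replacing $X$ by a blow-up'') but do not actually resolve.

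The genuine gap is in the final descent step. You treat \stacksproj{0EXG} as if its input were ``$L$ is torsion on every fibre'', but what is actually needed is that a \emph{fixed} power $L^m$ be trivial on $X\times_S\Spec \cO_{S,\eta_i}$ for the relevant codimension-one points $\eta_i$, i.e.\ over the whole divisorial valuation ring $A_i$, not merely on the reduced special fibre. Bridging that gap is the mixed-characteristic heart of the proposition: by \stacksproj{0EXF} one must trivialise $L^m$ on every infinitesimal thickening $X_{n,i}$ of the special fibre, and passing from torsion on $X_{s_i}$ to torsion on $X_{n,i}$ uses Theorem \ref{thm:universal-homeomorphism-semiample} (the inclusion of the special fibre is a finite universal homeomorphism and $(X_{n,i})_{\bQ}=\emptyset$ since the residue characteristic is positive). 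Moreover the exponent $m$ must be uniform in $i$ and $n$; the paper secures this by first shrinking to a dense open $U\subseteq S$ over which a single power of $L$ already descends, so that only the finitely many codimension-one generic points of $S\setminus U$ remain --- it explicitly warns that over infinitely many such points the required power could be unbounded, breaking the application of \stacksproj{0EXG}. Your fallback of normalising $X$ so as to use Lemma \ref{lem:descend-equidimensional-base-qfactorial} does not repair this: the normalisation is not a morphism with geometrically connected fibres, so none of the descent mechanisms you have set up lets you bring the resulting bundle back to $X$.
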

\noindent We follow the ideas of \cite[Tag 0EXH]{stacks-project} and repeatedly use Theorem \ref{thm:fibre-of-Pic-is-h-sheaf}.
\begin{proof}
The full faithfullness of $\pi^* \colon \underline{\PicS}_M(S) \to \underline{\PicS}_M(X)$ up to tensoring by $\bQ$ means that the map $\Hom(\cL_1,\cL_2) \to \Hom(\pi^*\cL_1, \pi^*\cL_2)$ is an isomorphism up to tensoring by $\bQ$ for every $\cL_1, \cL_2 \in \underline{\PicS}_M(S)$. Let $\cL_i = (L_i, \phi_i)$ for a line bundle $L_i$ on $S$ and an isomorphism $\phi_i \colon L_i|_{S_{\bQ}} \xrightarrow{\simeq} M$. It is enough to check the above assertion affine locally, so we can assume that $L_1$ and $L_2$ are trivial. Then the above map identifies with $\underline{\bG}_{m,s}(S) \otimes \bQ \to \underline{\bG}_{m,s}(X) \otimes \bQ$, where $s = \phi^{-1}_2 \circ \phi_1 \in \bG_m(X_{\bQ})$. This is an isomorphism by Lemma \ref{lemma:Gm-for-geometrically-connected-fibres}.


Hence, it is enough to show that every $\cL \in \underline{\PicS}_M(X) \otimes \bQ$ as in the statement of the proposition lies in the essential image of $\pi^*$. By Corollary \ref{cor:our-functors-under-thickenings-twisted}, we may assume that $X$, and so also $S$, are reduced. Further, as the finite part of the Stein factorisation is a universal homeomorphism, we may assume that $\pi_* \cO_X = \cO_S$ by Theorem \ref{thm:Pic-under-uh}.

Since $\pi_*\cO_X=\cO_S$, we have that $\pi$ is flat over an open dense subset $U$ of $S$ (\cite[Tag 052B]{stacks-project}), and the flattening (\cite[Tag 081R]{stacks-project}) provides us with a $U$-admissible blow-up $g \colon S' \to S$ (with exceptional locus $E$) such that the strict transform $X'$ in $X \times_S S'$ is flat over $S$. Since $U$ is dense, no irreducible components of $S$ are contained in $S \,\backslash\, U$. In particular, $g$ induces a bijection between irreducible components of $S'$ and $S$ (\cite[Tag 0BFM]{stacks-project}), and so $\pi' \colon X' \to S'$ is generically a contraction (that is, $\pi'_*\cO_{X'} = \cO_{S'}$ holds over all generic points of $S'$). As each point on $E$ is a specialisation of a point on $S' \, \backslash \, E$, we have that $\pi' \colon X' \to S'$ has geometrically connected fibres by \cite[Tag 0BUI]{stacks-project}. Consider the following Cartesian diagram
\begin{center}
\begin{tikzcd}
X' \cup E \times_S X \arrow{r}{g_X} \arrow{d}{\pi'} & X \arrow{d}{\pi} \\
S' \arrow{r}{g} & S.
\end{tikzcd}
\end{center}
We claim that it is enough to show the proposition for $g_X^*\cL|_{X'}$ over $S'$. By Lemma \ref{lem:essential-image-h-topology}, it is enough to show the proposition for $g_X^*\cL$ over $S'$, and the claim holds by Lemma \ref{lem:essential-image-h-topology2} and Noetherian induction (applied to $E \times_S X \to E$, which has geometrically connected fibres; also note that $\pi'(X'\cap (E\times_X X)) = E$ and $X' \cap (E \times_S X) = (\pi'|_{X'})^{-1}(E)$, thus $\pi'|_{X' \cap (E \times_S X)}$ has geometrically connected fibres). Here, we use that pullback maps on $\underline{\PicS}_M \otimes \bQ$ for all proper surjective morphisms with geometrically connected fibres are fully faithful as proven in the first paragraph.

By replacing $X \to S$ by $X' \to S'$, we may thus assume that $X \to S$ is flat. By the same argument, we may replace $S$ by its normalisation $\tilde S \to S$, and $X$ by the base change $X \times_S \tilde S$, and assume that $S$ is normal. We know that $X \to S$ has geometrically connected fibres and, over a dense open subset, is a contraction. This means that the finite part of the Stein factorisation is birational (isomorphic over a dense open subset) and, since $S$ is normal, it must be an isomorphism. Hence, $\pi_*\cO_X = \cO_S$. 

Let $\cL = (L, \phi)$, where $L$ is the underlying line bundle. It is enough to show that $L^m \simeq \pi^*E$ for  some line bundle $E$ on $S$ and $m \in \bN$. Indeed, $(\pi_{\bQ})_*\cO^*_{X_{\bQ}} = \cO^*_{S_{\bQ}}$, and so $\phi \in \Hom(L|_{X_{\bQ}}, \pi_{\bQ}^*M)$ must then descend uniquely to an isomorphism in $\Hom(E|_{S_{\bQ}}, M)$ (this can be checked locally in which case we can assume that $E$ and $M$ are trivial). 

Since $L$ is semiample over every point of $S$ (and so also over the generic points), there exists a \emph{dense} open subset $U \subseteq S$ such that $L|_{X_U}$ is semiample over $U$, where $X_U = X \times_S U$. Let $\eta_1, \ldots, \eta_k$ be the generic points of these irreducible components of $Z := S \,\backslash\, U$ which are of codimension one. Then the localisations $A_i := \cO_{X,\eta_i}$ at $\eta_i$ are divisorial valuation rings. We may assume that $S_{\bQ} \subset U$, and so the residue fields of $A_i$ are of positive characteristic.

By \cite[Tag 0EXG]{stacks-project}, it is enough to show that $L^m|_{X_{i}}$ is trivial over $\Spec A_i$ for $1 \leq i \leq k$ and some $m \in \bN$, where $X_i = X \times_S \Spec A_i$. In turn, to show this statement, it is enough to prove that for every $n \in \bN$ there exists $m \in \bN$ such that $L^m|_{X_{n,i}}$ is trivial, where $X_{n,i}$ is the zero locus in $X_i$ of the $n$-th power of the uniformiser of $A_i$ (\cite[Tag 0EXF]{stacks-project}). Since $L$ is semiample over the special point of $\Spec A_i$, this follows from Theorem \ref{thm:universal-homeomorphism-semiample}, concluding the proof of the proposition. 
\end{proof}
\noindent We point out to the reader that it is important to consider only a finite number of codimension one points. Otherwise, it could a priori happen that the power of $L$ that renders $L|_{X_i}$ trivial is unbounded, and so we would not be able to apply \cite[Tag 0EXG]{stacks-project}.

\begin{proposition} \label{prop:relatively-trivial-case}
Let $X$ be a scheme admitting a proper morphism $\pi \colon X \to S$ with geometrically connected fibres to an excellent scheme $S$. Let $L$ be a line bundle on $X$. Then $L$ is relatively torsion if and only if some multiple of $L|_{X_{\bQ}}$ descends to $S_{\bQ}$ and $L|_{X_s}$ is relatively torsion for every point $s \in S$ having positive characteristic residue field.
\end{proposition}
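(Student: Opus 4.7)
The plan is to split the statement into the two implications and to derive the backward (substantive) direction directly from Proposition \ref{prop:PicS-ff}, which already packages the geometric content through flattening and a codimension-one analysis.

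For the backward direction, the starting data are an isomorphism $\phi \colon L^N|_{X_{\bQ}} \xrightarrow{\simeq} \pi_{\bQ}^* M$ (for some $N \in \bN$ and some line bundle $M$ on $S_{\bQ}$) together with the assumption that $L|_{X_s}$, and hence $L^N|_{X_s}$, is torsion at every positive-characteristic fibre. The pair $(L^N,\phi)$ thus defines an object of $\underline{\PicS}_M(X)$ satisfying the essential-image criterion of Proposition \ref{prop:PicS-ff}. Passing to $\otimes \bQ$ will produce $k \geq 1$ and a line bundle $E$ on $S$ with $\pi^*E \simeq L^{Nk}$. Since any line bundle on $S$ is Zariski-locally trivial, this forces $L^{Nk}$ to be relatively trivial on $S$, which is exactly the assertion that $L$ is relatively torsion.

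The forward direction should be routine. Given $L^m$ relatively trivial, the fibrewise condition is immediate since $L^m|_{X_s}$ is trivial. To descend a multiple of $L|_{X_{\bQ}}$ to $S_{\bQ}$, I would take the Stein factorisation $\pi_{\bQ} = g_{\bQ} \circ h_{\bQ}$ with $h_{\bQ}$ a contraction and $g_{\bQ}$ a finite morphism with geometrically connected fibres, and apply Lemma \ref{lem:descend-relatively-torsion} to the contraction $h_{\bQ}$ to get $L^{m'}|_{X_{\bQ}} \simeq h_{\bQ}^* \tilde M$. Since $g_{\bQ}$ is a finite universal homeomorphism between $\bQ$-schemes, hence an isomorphism on reduced structures, some further multiple of $\tilde M$ will descend along $g_{\bQ}$ by Theorem \ref{thm:Pic-under-uh}.

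The only genuinely delicate point is the unwinding of ``up to tensoring by $\bQ$'' in the essential image statement, which just amounts to replacing $L^N$ and $M$ by their $k$-th powers for some $k \geq 1$. All the serious work — flattening, reducing to the normal flat case, and running through the codimension-one divisorial valuation rings to apply the mixed characteristic Keel theorem — has already been carried out in Proposition \ref{prop:PicS-ff}, so the present proposition really is a thin wrapper that translates the essential-image statement into the language of relatively torsion line bundles.
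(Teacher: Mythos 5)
Your backward implication is correct and is precisely the paper's argument: the paper's entire proof reads ``This is immediate from Proposition \ref{prop:PicS-ff}'', and the content of that reduction is exactly your step of feeding $(L^N,\phi)$ into the essential-image criterion, unwinding ``$\otimes\bQ$'' into ``replace by a further power'', obtaining $L^{Nk}\simeq\pi^*E$, and concluding relative triviality from the Zariski-local triviality of $E$. (The torsion exponent on the fibres is allowed to vary with $s$ in both statements, so no uniformity issue arises.)

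The forward implication, however, contains a genuine error. A finite universal homeomorphism of $\bQ$-schemes is \emph{not} an isomorphism on reduced structures: the normalisation of a cuspidal cubic over $\bQ$ is a finite universal homeomorphism of reduced schemes which is not an isomorphism. Moreover, Theorem \ref{thm:Pic-under-uh} is vacuous for the map $g_{\bQ}\colon S'_{\bQ}\to S_{\bQ}$, because for $\bQ$-schemes the vertical arrows of the square are identities and Cartesianity carries no information (equivalently, $\underline{\PicS}_M$ of a $\bQ$-scheme is the trivial groupoid). So the final descent of a multiple of $\tilde M$ from the Stein factorisation $S'_{\bQ}$ to $S_{\bQ}$ is unsupported, and this is not a repairable slip: the obstruction to descending a line bundle along a finite universal homeomorphism of $\bQ$-schemes sits in coherent cohomology groups, which are $\bQ$-vector spaces, so it is not killed by passing to powers. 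Concretely, let $E$ be an elliptic curve over a characteristic-zero field, $X=E\times E$ with the Poincar\'e bundle $P$, and let $S$ be the Ferrand pushout collapsing the first infinitesimal neighbourhood of $E\times\{0\}$ onto $E\times\{0\}$ via the projection; then $X\to S$ is a finite universal homeomorphism with geometrically connected fibres, $P$ is relatively trivial over $S$ (any line bundle is, for a finite morphism), yet no power of $P$ restricts to a pullback from $E$ on that infinitesimal neighbourhood, so no power of $P$ descends to $S$. Thus the ``only if'' direction requires either the hypothesis $\pi_*\cO_X=\cO_S$ (under which Lemma \ref{lem:descend-relatively-torsion} settles it) or a weaker reading of ``descends''; note that only the ``if'' direction is invoked later in the paper (via Proposition \ref{prop:relatively-trivial-case-algebraic-spaces}, where $\pi_*\cO_X=\cO_S$ is assumed), so you should either add that hypothesis or confine your proof to the direction that is actually used.
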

\begin{proof}
This is immediate from Proposition \ref{prop:PicS-ff}.
\end{proof}

We also get a variant for algebraic spaces.
\begin{proposition} \label{prop:relatively-trivial-case-algebraic-spaces}
Let $X$ be a scheme admitting a proper morphism $\pi \colon X \to S$ to an excellent algebraic space $S$ such that $\pi_*\cO_X=\cO_S$. Let $L$ be a line bundle on $X$. Then $L$ descends to $S$ if and only if some multiple of $L|_{X_{\bQ}}$ descends to $S_{\bQ}$ and $L|_{X_s}$ is relatively torsion for every point $s \in S$ having positive characteristic residue field.
\end{proposition}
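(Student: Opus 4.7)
The forward direction is immediate: from an isomorphism $L^m \simeq \pi^* M$ for some $m \in \bN$ and some line bundle $M$ on $S$, one reads off both that $L^m|_{X_{\bQ}} \simeq \pi_{\bQ}^*(M|_{S_{\bQ}})$ and that $L^m|_{X_s}$ is trivial for every $s \in S$. All the content lies in the converse, and the plan is to reduce it to the scheme-theoretic case (Proposition~\ref{prop:relatively-trivial-case} combined with Lemma~\ref{lem:descend-relatively-torsion}) by \'etale descent along a scheme atlas of $S$.

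First, I would choose an \'etale surjection $u \colon S' \to S$ from an (excellent) scheme $S'$, set $X' := X \times_S S'$ with projection $\pi' \colon X' \to S'$, and $L' := L|_{X'}$. Flat base change together with $\pi_*\cO_X = \cO_S$ gives $\pi'_*\cO_{X'} = \cO_{S'}$, so $\pi'$ has geometrically connected fibres, and the two hypotheses on $L$ pull back verbatim to $L'$. Proposition~\ref{prop:relatively-trivial-case} then shows that $L'$ is relatively torsion, and Lemma~\ref{lem:descend-relatively-torsion} produces $m \in \bN$ and a line bundle $M'$ on $S'$ with $L'^m \simeq \pi'^*M'$.

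To descend $M'$ to $S$, I would form $S'' := S' \times_S S'$ and $S''' := S' \times_S S' \times_S S'$, both of which are schemes since $u$ is representable, along with the corresponding $X'' := X \times_S S''$ and $X''' := X \times_S S'''$. Pulling back $L'^m \simeq \pi'^*M'$ via the two maps $X'' \to X'$ identifies both $\pi''^*(p_1^*M')$ and $\pi''^*(p_2^*M')$ with $L^m|_{X''}$, so $\pi''^*(p_1^*M' \otimes (p_2^*M')^{-1}) \simeq \cO_{X''}$; by the projection formula together with $\pi''_*\cO_{X''} = \cO_{S''}$, this trivialises $p_1^*M' \otimes (p_2^*M')^{-1}$ on $S''$ and yields a gluing isomorphism $\phi \colon p_1^*M' \xrightarrow{\simeq} p_2^*M'$. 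The same mechanism on $S'''$ shows that the cocycle defect of $\phi$, a priori an element of $H^0(S''', \cO^*_{S'''})$, pulls back to the identity on $X'''$ and hence equals the identity already on $S'''$ because $\pi'''_*\cO_{X'''} = \cO_{S'''}$. Thus $(M', \phi)$ is an \'etale descent datum, and since $\PicS$ is a stack for the \'etale topology it produces the required line bundle $M$ on $S$ with $L^m \simeq \pi^*M$.

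The principal technical point will be the cocycle verification, but it amounts to the injectivity of the restriction $H^0(S''', \cO^*_{S'''}) \to H^0(X''', \cO^*_{X'''})$ via $\pi'''_*\cO_{X'''} = \cO_{S'''}$; once the \'etale reduction is set up, no ingredients beyond Proposition~\ref{prop:relatively-trivial-case}, Lemma~\ref{lem:descend-relatively-torsion}, and \'etale descent for $\PicS$ are required.
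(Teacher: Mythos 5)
Your argument is correct, but it is considerably more elaborate than the paper's. The paper disposes of the algebraic-space case in two lines: the conclusion ``$L$ descends to $S$'' is rephrased as ``$\pi_*L^m$ is a line bundle (with $\pi^*\pi_*L^m \to L^m$ an isomorphism) for some $m>0$,'' and since $\pi_*$ commutes with flat base change, this is a property that can be checked after pulling back along an \'etale atlas $S' \to S$, where Proposition~\ref{prop:relatively-trivial-case} applies directly. Your route instead produces $M'$ on the atlas and then manually descends it: you build the gluing isomorphism $\phi$ on $S' \times_S S'$ via full faithfulness of $\pi''^*$ (from $\pi''_*\cO_{X''}=\cO_{S''}$), verify the cocycle condition by injectivity of units on $S'''$, and invoke that $\PicS$ is an \'etale stack. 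All of these steps are sound --- the fibre products $S''$, $S'''$ are schemes because the diagonal of an algebraic space is representable by schemes, a single $m$ works because $S$ is Noetherian so the atlas may be taken quasi-compact, and the isomorphism $L'^m \simeq \pi'^*M'$ is itself a morphism of descent data so it descends to $L^m \simeq \pi^*M$. What the paper's phrasing buys is that none of this bookkeeping is needed: by packaging the descended bundle as the coherent pushforward $\pi_*L^m$, the descent datum, cocycle check, and stack property are all absorbed into flat base change for $\pi_*$ and the \'etale-local nature of invertibility. Your version is a correct unwinding of the same underlying descent, just carried out by hand.
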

\begin{proof}
We need to show that $\pi_*L^m$ is a line bundle for some $m>0$ which can be verified \'etale locally. Hence, we can assume that $S$ is a scheme, and so we can conclude by Proposition \ref{prop:relatively-trivial-case}.
\end{proof}

\subsection{Normal case}
We need the following lemma.
\begin{lemma} \label{lem:pullback-via-alteration}
Let $X$ be a normal integral scheme admitting a proper surjective morphism $\pi \colon X \to S$ to an excellent integral scheme $S$ satisfying $\dim S < \dim X$. Let $L$ be a relatively nef line bundle on $X$ such that $L|_{X_{\eta}}$ is trivial, where $\eta$ is the generic point of $S$, $L|_{X_s}$ is semiample for every positive characteristic point $s\in S$, and $L|_{X_{\bQ}}$ is relatively semiample. Let $g \colon S' \to S$ and $g_X \colon X' \to X$ be proper, surjective, and generically finite maps sitting in the following diagram
\begin{center}
\begin{tikzcd}
X' \arrow{r}{g_X} \arrow{d}{\pi'} & X \arrow{d}{\pi} \\
S' \arrow{r}{g} & S,  
\end{tikzcd}
\end{center}
where $X'$ and $S'$ are integral.

Suppose that $L' := g_X^*L$ is semiample over $S'$ and that Theorem \ref{thm:main-intro} holds for schemes of dimension smaller than $\dim X$. Then $L$ is semiample over $S$.
\end{lemma}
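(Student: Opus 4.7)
The plan is to construct a separated algebraic space $Z$ over $S$ and a proper surjective morphism $\phi \colon X \to Z$ with $\phi_* \cO_X = \cO_Z$, together with a line bundle $A$ on $Z$ and an integer $n \geq 1$ such that $\dim Z = \dim S < \dim X$ and $L^n \simeq \phi^* A$. Once this is established, the inductive hypothesis — Theorem \ref{thm:main-intro} in dimension smaller than $\dim X$ — applied to $A$ on $Z$ will yield that $A$ is relatively semiample over $S$; hence $L^n$, and therefore $L$, is relatively semiample over $S$ by Lemma \ref{lem:semiample-under-pullback}. The fibrewise hypotheses on $A$ required for the induction will follow from the corresponding hypotheses on $L$ together with the construction of $\phi$.

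First I would reduce to the case that $g_X$ is finite by factoring through its Stein factorisation $X' \xrightarrow{\alpha} X'' \to X$; since $\alpha$ is a contraction, Lemma \ref{lem:semiample-under-pullback} transports the semiampleness of $L'$ to the corresponding line bundle on $X''$, and I may replace $X'$ by $X''$. After replacing $L$ by a sufficiently large multiple, the semiampleness of $L'$ then produces a relative contraction $\phi' \colon X' \to Z'$ over $S'$ with $L' \simeq (\phi')^* A'$ for some relatively ample line bundle $A'$ on $Z'$. The triviality of $L|_{X_\eta}$ forces $\phi'$ to collapse the generic fibre of $X' \to S'$, so the generic fibres of $Z' \to S'$ are zero-dimensional and $\dim Z' = \dim S < \dim X$.

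Next I would construct $Z$ as a quotient of $Z'$ by the equivalence relation induced from the finite equivalence relation $R := X' \times_X X' \rightrightarrows X'$. Let $E_0 \subseteq Z' \times_S Z'$ be the reduction of the scheme-theoretic image of $R$ under $\phi' \times \phi'$. For every $s \in S$, the semiampleness of $L|_{X_s}$ (assumed in positive characteristic, and deduced from the semiampleness of $L|_{X_{\bQ}}$ in characteristic zero) produces a contraction $X_s \to \widetilde{Z}_s$ with $L|_{X_s}$ pulled back from an ample line bundle on $\widetilde{Z}_s$; pulling back by $g_X|_{X'_{s'}}$ and invoking uniqueness of semiample fibrations identifies $Z'|_{s'}$ with a finite cover of $\widetilde{Z}_s$ for each $s' \in S'$ above $s$. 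In particular, the restriction of $E_0$ over the corresponding stratum is contained in the finite set-theoretic equivalence relation $Z'|_{s'} \times_{\widetilde{Z}_s} Z'|_{s'}$, so Lemma \ref{lem:closures-of-subsets-of-finite-equivalence-relations} applied to a suitable Noetherian stratification of $S$ shows that the equivalence closure $\overline{E_0}$ is a finite set-theoretic equivalence relation on $Z'$. Theorem \ref{thm:quotients} then yields the geometric quotient $Z := Z'/\overline{E_0}$ as a separated algebraic space of finite type over $S$, and the composition $X' \to Z' \to Z$ is constant on $R$-orbits by construction, hence descends to a proper surjective morphism $\phi \colon X \to Z$. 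Replacing $Z$ by the finite part of the Stein factorisation of $\phi$ ensures $\phi_* \cO_X = \cO_Z$ without increasing dimension.

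The descent of $L^n$ to $Z$ then proceeds via Proposition \ref{prop:relatively-trivial-case-algebraic-spaces}. By construction, for every $z \in Z$ the pullback of $L|_{X_z}$ under $g_X^{-1}(X_z) \to X_z$ is trivial on each fibre of $\phi'$ over a point of $h^{-1}(z) \subseteq Z'$ (where $h \colon Z' \to Z$ is the quotient), and thus trivial after reduction. Since pullback under a finite surjective map has torsion kernel on $\Pic$, this forces $L|_{X_z}$ to be torsion; additionally, $L|_{X_{\bQ}}$ descends to $Z_{\bQ}$ since its semiample fibration factors through $\phi_{\bQ}$. Proposition \ref{prop:relatively-trivial-case-algebraic-spaces} thus gives $L^n \simeq \phi^* A$ for some line bundle $A$ on $Z$ and some $n \geq 1$, completing the construction. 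The main obstacle is step~3, namely showing that $\overline{E_0}$ is a \emph{finite} set-theoretic equivalence relation: since $\phi'$ has positive-dimensional fibres over non-generic points of $S'$, the naive image $E_0$ does not have finite projections to $Z'$ in general, and the stratified comparison with the fibrewise semiample contractions is essential. A secondary technical point is ensuring that $Z$ (or the target of the Stein factorisation of $\phi$) is a genuine scheme so that the inductive hypothesis applies directly; this should follow from the ampleness of $A'$ descending through the quotient.
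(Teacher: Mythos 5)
Your proposal replaces the short argument that actually proves this lemma with the heavy quotient machinery of the non-normal case, and several of its steps break. The idea you are missing is that once you have the contraction $\phi' \colon X' \to Z'$ over $S'$ with $L'^n \simeq \phi'^*A'$ and $\dim Z' = \dim S < \dim X$, you can apply the inductive hypothesis (Theorem \ref{thm:main-intro}) to $A'$ on $Z'$ \emph{directly over $S$}, via the composite $Z' \to S' \to S$: the hypotheses for $A'$ over $S$ follow from those for $L$ because $\phi'_{\bQ}$ is a contraction (Lemma \ref{lem:semiample-under-pullback}) and $X'_s \to Z'_s$ has geometrically connected fibres for every positive characteristic point $s \in S$ (Corollary \ref{cor:connected-fibres-semiample}). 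This gives $A'$, hence $L'^n = \phi'^*A'$, semiample over $S$, and then $L$ is semiample over $S$ by the converse direction of Lemma \ref{lem:semiample-under-pullback}, since $g_X$ is proper surjective, $S$ is excellent, and $X$ is \emph{normal}. The normality of $X$ is precisely what makes any descent along an equivalence relation unnecessary here; that machinery is only needed in the non-normal case of the main theorem.

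Concretely, the steps of your route that fail. First, the reduction to $g_X$ finite via Stein factorisation is broken: the intermediate scheme $X''$ is finite over $X$ but carries no natural morphism to $S'$ (take $X' = X \times_S S'$ for a blow-up $S' \to S$ with connected fibres; then $X'' \simeq X$), so ``semiampleness over $S'$'' of the induced bundle on $X''$ is not defined, and reading it over $S$ instead is circular since that is the conclusion of the lemma. Second, even granting the quotient $Z = Z'/\overline{E_0}$ from Lemma \ref{lem:closures-of-subsets-of-finite-equivalence-relations} and Theorem \ref{thm:quotients}, the map $X' \to Z$ only descends to $X'/R$, and $X'/R \to X$ is a finite universal homeomorphism that need not be an isomorphism; you would have to pass back to $X$ via Theorem \ref{thm:universal-homeomorphism-semiample}, which you do not address. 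Third, $Z$ is a priori only an algebraic space, while Theorem \ref{thm:main-intro} is stated for schemes; your suggested fix (that ampleness of $A'$ descends through the quotient) is unsubstantiated, and note that in the paper's non-normal argument the algebraic-space quotient is handled by proving ampleness of the descended bundle via a finite cover, not by applying the induction to the quotient. Each of these is a genuine obstruction, and none arises in the direct argument above.
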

\begin{proof}

Let $X' \xrightarrow{h_1} W \xrightarrow{h_2} S'$ be a factorisation of $\pi'$ with $h_1$ being the projective contraction associated to $L'$ over $S'$. Since $L'$ is trivial over the generic point of $S'$, the morphism $h_2 \colon W \to S'$ is generically finite. In particular, the induced morphism $W \to S$ is generically finite, and so $\dim W = \dim S < \dim X$. Further, up to replacing line bundles by some multiples, $L' \sim h_1^*M$ for a line bundle $M$ on $W$.

We claim that $M$ satisfies the assumptions of Theorem \ref{thm:main-intro} over $S$, that is $M|_{W_{\bQ}}$ is semiample over $S$ and $M|_{W_s}$ is semiample for every positive characteristic $s \in S$ and the fibre $W_s$ of $W \to S$ over $s$. For the former, note that $L'|_{X_{\bQ}}$ is semiample over $S_{\bQ}$ by Lemma \ref{lem:semiample-under-pullback}, and so  $M|_{W_{\bQ}}$ is semiample by the same reference in view of $h_{1,\bQ} \colon X'_{\bQ} \to W_{\bQ}$ being a contraction.   The latter follows, for example, by Corollary \ref{cor:connected-fibres-semiample} (in fact we just need a positive characteristic variant thereof which is much simpler) as $X'_s \to W_s$ has geometrically connected fibres and $L'|_{X'_s} = (h_1)^*M|_{X_s}$ is semiample. 

In particular, $M$ is semiample over $S$, and so are $L'$ and $L$ by Lemma \ref{lem:semiample-under-pullback} (here we use the normality of $X$).
\end{proof}

We first show the following variant of Proposition \ref{prop:relatively-trivial-case}.
\begin{proposition} \label{prop:normal-case-partial}
Fix a natural number $n\in \bN$ and suppose that Theorem \ref{thm:main-intro} holds for all schemes $X$ of dimension at most $n-1$. Then the statement of Theorem \ref{thm:main-intro} holds in dimension $n$ when, in addition, $X$ is normal integral, $L|_{X_{\eta}} \sim_{\bQ} 0$, and $\dim S < \dim X$. Here $\eta$ is the generic point of $\pi(X)$.
\end{proposition}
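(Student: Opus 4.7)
The strategy is to reduce to the setting of Lemma~\ref{lem:descend-equidimensional-base-qfactorial}, which yields $L^m \simeq \pi^*M$ for some $m \in \bN$ and line bundle $M$ on $S$. Once $\pi_*\cO_X = \cO_S$ is arranged, the projection formula gives $\pi^*\pi_*L^m \simeq \pi^*M \simeq L^m$, so $L^m$ is relatively globally generated and hence $L$ is relatively semiample over $S$.

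First I would apply Stein factorization, writing $\pi = g \circ \pi'$ with $\pi'_*\cO_X = \cO_{S'}$ and $g$ finite. Since $X$ is integral and normal, so is $S'$, and by Lemma~\ref{lem:semiample-under-pullback} semiampleness over $S$ is equivalent to semiampleness over $S'$; replacing $S$ by $S'$, we may assume $\pi_*\cO_X = \cO_S$ with $S$ integral and normal. The remaining hypotheses transfer routinely, and $L$ is automatically relatively nef, since fibrewise semiampleness implies fibrewise nefness.

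Next I would use Theorem~\ref{thm:regular-alteration} to reduce to $S$ regular (hence $\bQ$-factorial). Base-changing $\pi$ along each step of the alteration and normalizing a dominant irreducible component of $X \times_S \tilde S$ yields an integral normal $\tilde X$ equipped with a proper, generically finite, surjective morphism $\tilde X \to X$ and a morphism $\tilde X \to \tilde S$ to a regular scheme. All hypotheses of the proposition transfer: pullback preserves semiampleness and $\bQ$-triviality, fibre formation commutes with base change, and dimensions are preserved under generic finiteness. By Lemma~\ref{lem:pullback-via-alteration}, invoking the induction hypothesis on $\dim X$, it suffices to establish the conclusion after this replacement. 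I would then apply Raynaud--Gruson flattening to obtain a blow-up $S' \to S$ along an ideal supported in the complement of the flat locus of $\pi$, so that the strict transform $X' \to S'$ becomes flat; iterating alteration and flattening as needed, one arrives at a model in which $\pi$ is flat (hence equidimensional by Tag~0D4J), $X$ is integral normal, and $S$ is integral normal and $\bQ$-factorial. Lemma~\ref{lem:descend-equidimensional-base-qfactorial} then produces the desired isomorphism $L^m \simeq \pi^*M$, concluding the argument.

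The principal technical difficulty lies in the last reduction: flattening can destroy $\bQ$-factoriality of $S$, and alterations used to restore it can destroy flatness of $\pi$. Ensuring that the iterative procedure terminates, while propagating the key hypothesis $L|_{X_\eta} \sim_\bQ 0$ and the dimension bound $\dim S < \dim X$ through each intermediate model, is the main technical point to get right.
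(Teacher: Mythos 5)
Your proposal has the right target (Lemma \ref{lem:descend-equidimensional-base-qfactorial}) and the right tools (flattening, Theorem \ref{thm:regular-alteration}, Lemma \ref{lem:pullback-via-alteration}), but it contains a genuine gap at exactly the point you flag yourself: you propose to ``iterate alteration and flattening as needed'' and leave open why this terminates. As stated, it does not obviously terminate --- flattening blows up $S$ and can destroy regularity, and regularizing $S$ by an alteration and replacing $X$ by the normalization of the dominant component of $X\times_S S'$ destroys flatness --- so the argument is incomplete.

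The way out, which is what the paper does, is to notice that Lemma \ref{lem:descend-equidimensional-base-qfactorial} does not require flatness of $\pi$, only \emph{equidimensionality}, and that equidimensionality (unlike flatness) is stable under the base changes you need afterwards. Concretely: perform the flattening \emph{first}, replacing $X\to S$ by $X'\to S'$ where $X'$ is the normalization of the dominant irreducible component of $X\times_S S'$; normalizing may break flatness but keeps $\pi'$ equidimensional. From then on, every further replacement --- $S'\to S$ proper surjective generically finite with $X'$ the normalization of the dominant component of $X\times_S S'$ (justified by Lemma \ref{lem:pullback-via-alteration}), or $S'\to S$ \'etale surjective with $X'=X\times_S S'$ (Lemma \ref{lem:semiample-under-faithfully-flat-cover}) --- preserves equidimensionality of the fibres. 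So a single application of Theorem \ref{thm:regular-alteration} to $S$ after the flattening yields $S$ regular with $X\to S$ equidimensional, with no iteration needed; one then checks that $X\to S$ is again a contraction (the finite part of its Stein factorization is birational onto the normal $S$, hence an isomorphism) and that $L|_{X_\eta}\sim_{\bQ}0$ survives, and concludes by Lemma \ref{lem:descend-equidimensional-base-qfactorial}. Reordering your reduction in this way, and adding the observation that only equidimensionality needs to be propagated, closes the gap.
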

\begin{proof}
We can assume that $S$ is integral, and $\pi \colon X \to S$ is a surjective contraction (cf.\ \cite[Lemma 2.10]{ct17}). First, we reduce to the case of $\pi$ being equidimensional. By the generic flatness (\cite[Tag 052A]{stacks-project}), $\pi$ is flat over an open dense subset of $S$. Thus, we can apply a flattening (\cite[081R]{stacks-project}), to get a projective birational map $g \colon S' \to S$ such that $X' \to S'$ is flat  where $X'$ is the irreducible component of $X \times_S S'$ dominant over $S'$. By replacing $X'$ by its normalisation, we may assume that $X'$ is normal; this might break the flatness of $\pi'$ but it preserves its equidimensionality. In order to show the proposition we may replace $X \to S$ by $X' \to S'$ thanks to Lemma \ref{lem:pullback-via-alteration}, so that we can assume $X \to S$ to be equidimensional. A priori $X \to S$ is now only generically a contraction.

Analogously, we may replace $X \to S$ by any $X' \to S'$ such that $S' \to S$ is a proper, surjective, generically finite map and $X'$ is the normalisation of the irreducible component of $X \times_S S'$ dominant over $S'$ (Lemma \ref{lem:pullback-via-alteration}), or such that $S' \to S$ is \'etale surjective and $X' = X \times_S S'$ (Lemma \ref{lem:semiample-under-faithfully-flat-cover}). Here we replace $L$ by its pullback to $X'$. Note that $X' \to S'$ is equidimensional, and so every such replacement preserves the equidimensionality of fibres. 

 In particular, by Theorem \ref{thm:regular-alteration} applied to $S$, we may assume that $S$ is regular and $X \to S$ is equidimensional. Further, as $X \to S$ is generically a contraction, the finite part of the Stein factorisation is birational, and since $S$ is normal, it must be an identity. Hence, $X \to S$ is a contraction. Moreover, the assumption that $L|_{X_{\eta}} \sim_{\bQ} 0$ for the generic point $\eta \in S$ is also preserved. 
 Hence the proposition follows from Lemma \ref{lem:descend-equidimensional-base-qfactorial}.



\end{proof}

\begin{proposition} \label{prop:normal-case}
Fix a natural number $n\in \bN$ and suppose that Theorem \ref{thm:main-intro} holds for all schemes $X$ of dimension at most $n-1$. Then the statement of Theorem \ref{thm:main-intro} holds in dimension $n$ when $X$ is normal.
\end{proposition}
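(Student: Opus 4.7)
The plan is to induct on $n = \dim X$, combining the mixed characteristic Keel theorem (Theorem \ref{theorem:mixed-char-Keel}) with Proposition \ref{prop:normal-case-partial}. After standard reductions using Lemma \ref{lem:semiample-under-pullback} and the Stein factorisation of $\pi$, one may assume that $X$ and $S$ are integral and $\pi \colon X \to S$ is a surjective contraction; let $\eta \in S$ denote the generic point. The argument splits according to whether $L|_{X_{\eta}}$ is big over $\kappa(\eta)$.

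If $L|_{X_{\eta}}$ is big, then $\mathbb{E}(L) \subsetneq X$ is a closed subset of dimension at most $n-1$, and the hypotheses of Theorem \ref{thm:main-intro} on $L$ restrict to the corresponding hypotheses on $L|_{\mathbb{E}(L)}$ (restrictions of relatively semiample line bundles to closed subschemes are relatively semiample). By the inductive hypothesis, $L|_{\mathbb{E}(L)}$ is semiample, and Theorem \ref{theorem:mixed-char-Keel} then concludes.

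If $L|_{X_{\eta}}$ is not big, it is nevertheless semiample (as a restriction of $L|_{X_{\bQ}}$ when $\kappa(\eta)$ has characteristic zero, or by hypothesis when $\kappa(\eta)$ has positive characteristic), so there exists $m>0$ such that $L^{m}|_{X_{\eta}}$ defines a contraction $\phi_{\eta}\colon X_{\eta}\to Y_{\eta}$ with $\dim Y_{\eta}<\dim X_{\eta}$ and $L^{m}|_{X_{\eta}}\simeq \phi_{\eta}^{*}A_{\eta}$ for some ample $A_{\eta}$. I would globalise $\phi_{\eta}$ by spreading it out over the dense open of $S$ on which $L^{m}$ is relatively base-point-free, taking the graph closure inside $X\times_{S}\overline{Y}$ for a Nagata compactification $\overline{Y}$ of the relative target, and normalising. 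This yields a commutative diagram
\[
\begin{tikzcd}
\tilde{X} \arrow{r}{\phi} \arrow{d}{g} & Y \arrow{d} \\
X \arrow{r}{\pi} & S,
\end{tikzcd}
\]
with $g$ a proper birational morphism of integral normal schemes, $Y$ a proper integral $S$-scheme with $\dim Y < \dim X$, and $\phi$ a contraction extending $\phi_{\eta}$. By Lemma \ref{lem:semiample-under-pullback}, it is enough to prove that $\tilde{L}:=g^{*}L$ is semiample. Choosing $A\in\Pic(Y)$ extending a multiple of $A_{\eta}$, the line bundle $L':=\tilde{L}^{m}\otimes \phi^{*}A^{-1}$ satisfies $L'|_{\tilde{X}_{\eta_{Y}}}\sim_{\bQ}0$, where $\eta_{Y}\in Y$ is the generic point.

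The next task is to apply Proposition \ref{prop:normal-case-partial} to $\phi\colon \tilde{X}\to Y$ and $L'$. Semiampleness of $L'|_{\tilde{X}_{\bQ}}$ over $Y_{\bQ}$ follows from that of $\tilde{L}|_{\tilde{X}_{\bQ}}$ over $S_{\bQ}$ (inherited from $L|_{X_{\bQ}}$, and automatically upgrading to semiampleness over $Y_{\bQ}$) tensored with the $\phi_{\bQ}$-trivial line bundle $\phi_{\bQ}^{*}A_{\bQ}^{-1}$; semiampleness of $L'|_{\tilde{X}_{t}}$ for a positive characteristic $t\in Y$, lying over $s\in S$ (necessarily of positive characteristic), follows because $A|_{t}$ is trivial and $\tilde{L}|_{\tilde{X}_{t}}$ is a restriction of the semiample $\tilde{L}|_{\tilde{X}_{s}}=g_{s}^{*}(L|_{X_{s}})$. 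Proposition \ref{prop:normal-case-partial} then yields that $L'$ is relatively semiample over $Y$; combined with Lemma \ref{lem:descend-relatively-torsion}, one obtains $L'^{k}\simeq \phi^{*}N$ for some $k>0$ and $N\in\Pic(Y)$. Setting $M:=A^{k}\otimes N$, we have $\tilde{L}^{km}\simeq \phi^{*}M$, and it remains to show that $M$ is semiample on $Y$ over $S$. Since $\dim Y<n$, the inductive hypothesis applies once we verify its two input assumptions: semiampleness of $M|_{Y_{\bQ}}$ over $S_{\bQ}$ follows from Lemma \ref{lem:semiample-under-pullback} applied to the contraction $\phi_{\bQ}$ (using that $\phi_{\bQ}^{*}M|_{Y_{\bQ}}=\tilde{L}^{km}|_{\tilde{X}_{\bQ}}$ is semiample), and semiampleness of $M|_{Y_{s}}$ for positive characteristic $s\in S$ follows from the positive-characteristic analogue of Corollary \ref{cor:connected-fibres-semiample} applied to $\phi_{s}\colon \tilde{X}_{s}\to Y_{s}$, which still has geometrically connected fibres. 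Hence $\tilde{L}^{km}\simeq \phi^{*}M$ is semiample, so is $\tilde L$, and so is $L$ by Lemma \ref{lem:semiample-under-pullback}.

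\emph{Main obstacle.} The principal technical difficulty lies in Case 2: constructing the global fibration $\phi\colon \tilde{X}\to Y$ that extends the generic-fibre contraction past the boundary on which $L$ is only fibrewise generated, and then carefully tracking both the hypotheses of Proposition \ref{prop:normal-case-partial} and those needed for the final inductive step on $M$ through the graph-closure/normalisation construction and through descent along the contraction $\phi$.
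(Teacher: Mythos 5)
Your proposal follows essentially the same route as the paper: the big case is handled by Theorem \ref{theorem:mixed-char-Keel} together with the inductive hypothesis applied to $\mathbb{E}(L)$, and the non-big case by globalising the generic-fibre contraction via Nagata compactification and resolution of the resulting rational map, applying Proposition \ref{prop:normal-case-partial} over the new base, and then invoking the inductive hypothesis for the descended bundle $M$ (your twist by $\phi^{*}A^{-1}$ is harmless but unnecessary, since $\tilde{L}$ is already $\bQ$-trivial on the generic fibre of $\phi$). The one genuine omission is the reduction, via Chow's lemma and Lemma \ref{lem:semiample-under-pullback}, to $X$ \emph{projective} over $S$: Theorem \ref{theorem:mixed-char-Keel} is stated only for projective morphisms, so your Case 1 cannot invoke it for a merely proper $\pi$ without this preliminary step.
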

\begin{proof}
We can assume that $S$ is normal integral, $\pi \colon X \to S$ is surjective, and $X$ is normal integral (cf.\ \cite[Lemma 2.10]{ct17}). Further, by Chow's lemma (\cite[Tag 088U]{stacks-project}), we may assume that $X$ is projective over $S$. Indeed, if the pullback of $L$ under a surjective proper map is semiample, then so is $L$ itself in view of $X$ being normal integral (Lemma \ref{lem:semiample-under-pullback}). Let $\eta$ be the generic point of $S$. If $L$ is big (equivalently, $L|_{X_{\eta}}$ is big), then by Theorem \ref{theorem:mixed-char-Keel}, it is enough to verify that $L|_{\mathbb{E}(L)}$ is semiample, which is the case by assumptions as $\dim \mathbb{E}(L) \leq n-1$. Thus we may assume that $L$ is not big. 

By assumptions, $L|_{X_{\eta}}$ is semiample, and so there exists a fibration $f_{\eta} \colon X_{\eta} \to Z_{\eta}$ such that $L|_{X_{\eta}}$ is torsion over $Z_{\eta}$. 
Since $L$ is not big, we have that $\dim Z_{\eta} < \dim X_{\eta}$. 

Let $Z$ be a proper compactification of $Z_{\eta}$ over $S$ which exists by the Nagata compactification (see \cite[Tag 0F41]{stacks-project}; here we spread out $f_{\eta}$ to be defined over an open subset of $S$, so that the schemes in question are of finite type). We may assume that $Z$ is normal. By resolving the indeterminacies of $X \dashrightarrow Z$, we get projective surjective maps $h_1 \colon W \to X$ and $h_2 \colon W \to Z$ with $W$ normal. By replacing $Z$ by the finite part of the Stein factorisation of $h_2 \colon W \to Z$, we may assume that $h_2$ is a contraction. By applying Proposition \ref{prop:normal-case-partial} to $X$, $L$, $S$ replaced by $W$, $h_1^*L$, $Z$, respectively, we get that $h_1^*L^m \sim h_2^*M$ for some line bundle $M$ on $Z$ and $m \in \bN$. 

Since $h_2|_{\bQ} \colon W_{\bQ} \to Z_{\bQ}$ is a contraction, $M|_{Z_{\bQ}}$ is semiample by Lemma \ref{lem:semiample-under-pullback}. Moroever, as $W_s \to Z_s$ has geometrically connected fibres for every positive characteristic $s \in S$ and the fibres $W_s$, $Z_s$ over it, we have that  $M|_{Z_s}$ is semiample (cf.\ Corollary \ref{cor:connected-fibres-semiample}).
By the assumption on the validity of Theorem \ref{thm:main-intro} in lower dimensions, $M$ is relatively semiample over $S$, thus so is $h_1^*L$ and hence $L$ (see Lemma \ref{lem:semiample-under-pullback}, here $X$ is normal).
\end{proof}

\subsection{Non-normal case}
We can conclude the proof.
\begin{proof}[Proof of Theorem \ref{thm:main-intro}]
We argue by induction on dimension. 
By Theorem \ref{thm:universal-homeomorphism-semiample} we may assume that $X$ and $S$ are reduced.


Let $g \colon Y \to X$ be the normalisation. By Lemma \ref{lem:semiample-under-pullback}, $g^*L$ is semiample on each fibre over $S$ and $g^*L|_{Y_{\bQ}}$ is semiample. Hence Proposition \ref{prop:normal-case} implies that $g^*L$ is semiample over $S$. Let $h \colon Y \to Z$ be the associated semiample fibration. 

Let $E = (Y \times_X Y)_{\red} \rightrightarrows Y$ be a finite set theoretic equivalence relation. Note that $E \subseteq Y \times_S Y$ as $X \to S$ is separated (\cite[Tag 01KR]{stacks-project}). Set $E_{Z} = (h\times h)(E) \subseteq Z \times_S Z$. Below, we prove two claims.

\begin{claim} There exists a closure $\overline E_{Z}$ of $E_{Z}$ as a finite set theoretic equivalence relation. 
\end{claim}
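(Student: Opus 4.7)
The plan is to verify the hypothesis of Lemma \ref{lem:closures-of-subsets-of-finite-equivalence-relations} for $E_Z \subseteq Z \times_S Z$, using the stratification of $S$ by residue characteristic. Since $S$ is Noetherian, its image in $\Spec \bZ$ is constructible, so only finitely many primes $p_1, \ldots, p_k$ appear as positive-characteristic residues. Setting $S_j \coloneq \bigcup_{i \leq j} S_{\bF_{p_i}}$ (closed in $S$) for $1 \leq j \leq k$ and $S_{k+1} \coloneq S$ yields strata $U_j = S_{\bF_{p_j}}$ for $j \leq k$ and $U_{k+1} = S_{\bQ}$.

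On each stratum $U = U_j$, the restriction $L|_{X_U}$ is semiample over $U$: for $U = S_{\bQ}$ this is explicit in the hypothesis of Theorem \ref{thm:main-intro}, while for positive-characteristic strata it follows from Theorem \ref{thm:Cascini-Tanaka} applied to $X_U \to U$ (after a Chow's lemma reduction to the projective case if necessary), using the fibrewise semiampleness. Let $f_U \colon X_U \to W_U$ be the resulting semiample fibration, so that $L|_{X_U} \sim_{\bQ} f_U^* A_{W_U}$ for an ample line bundle $A_{W_U}$ on $W_U$, and write $g^*L \sim_{\bQ} h^* A_Z$ for an ample line bundle $A_Z$ on $Z$. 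The key geometric observation is that for every $z \in Z_U$, the image $V \coloneq g(h^{-1}(z)) \subseteq X_U$ lies in a single fibre of $f_U$: since $g^*L|_{h^{-1}(z)} \sim_{\bQ} (h^*A_Z)|_{h^{-1}(z)}$ is numerically trivial, pushing down through the finite surjection $g|_{h^{-1}(z)} \colon h^{-1}(z) \to V$ shows that $L|_V$ is numerically trivial; combined with $L|_V \sim_{\bQ} (f_U|_V)^*A_{W_U}$ and the ampleness of $A_{W_U}$, this forces $f_U(V)$ to be zero-dimensional, and the geometric connectedness of $h^{-1}(z)$ (as $h$ is a contraction) then collapses it to a single point.

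Writing $Y_U \xrightarrow{\varphi_U} Z'_U \xrightarrow{\chi_U} Z_U$ for the Stein factorisation of $h_U$, the single-fibre observation shows that $f_U \circ g_U$ also factors through $\varphi_U$, yielding a finite morphism $\psi_U \colon Z'_U \to W_U$ (identifiable, via uniqueness of semiample fibrations, with the finite part of the Stein factorisation of $f_U \circ g_U$). The geometric connectedness of $h$-fibres makes $\chi_U$ a universal homeomorphism, so after replacing everything with reductions, $\psi_U$ descends to a finite morphism $\phi_U \colon Z_U \to W_U$. Setting $H_j \coloneq (Z_U \times_{W_U} Z_U)_{\red}$ then defines a finite set theoretic equivalence relation on $Z_U$ over $U_j$; the inclusion $E_{Z, U_j} \subseteq H_j$ follows because for $(y_1, y_2) \in E_U$ the equality $g(y_1) = g(y_2)$ forces $\phi_U(h(y_1)) = \phi_U(h(y_2))$. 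Lemma \ref{lem:closures-of-subsets-of-finite-equivalence-relations} then yields the desired closure $\overline{E_Z}$ as a finite set theoretic equivalence relation.

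The main obstacle I anticipate is the scheme-theoretic descent of $\psi_U$ to $\phi_U$: the single-fibre argument handles the set-theoretic factorisation, and the universal-homeomorphism nature of $\chi_U$ reduces the scheme-theoretic descent to a reduced setting, but one must check carefully that the two compositions $Z'_U \times_{Z_U} Z'_U \rightrightarrows Z'_U \to W_U$ coincide as morphisms of algebraic spaces after reduction. Combined with the Chow's lemma reduction required in order to apply Theorem \ref{thm:Cascini-Tanaka} on positive characteristic strata, this will constitute the bulk of the technical bookkeeping.
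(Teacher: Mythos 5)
There is a genuine gap, and it is in the very first step: the stratification by residue characteristic does not exist for a general excellent base $S$. Your claim that ``only finitely many primes appear as positive-characteristic residues'' is false — take $S = \Spec \bZ$ (or any $S$ whose image in $\Spec \bZ$ contains the generic point): the image is constructible, but a constructible subset of $\Spec\bZ$ containing the generic point contains all but finitely many closed points, so infinitely many characteristics occur. Relatedly, $S_{\bQ}$ is the preimage of the generic point of $\Spec\bZ$ and is in general not locally closed in $S$, so it cannot appear as a stratum $S_j\,\backslash\,S_{j-1}$ of any finite filtration by closed subschemes, and Lemma \ref{lem:closures-of-subsets-of-finite-equivalence-relations} cannot be fed this decomposition. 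The paper stratifies differently: pick a generic point $\eta\in S$; then $L|_{X_\eta}$ is semiample (by hypothesis if $\eta$ has positive residue characteristic, and by restricting the relative semiampleness of $L|_{X_{\bQ}}$ if it has characteristic zero), this spreads out to relative semiampleness over an open $U\subseteq S$, and Noetherian induction on $S\,\backslash\,U$ produces a filtration whose strata are genuinely locally closed and are defined by ``$L$ is relatively semiample over the stratum'' rather than by characteristic. This is the decomposition that makes the rest of the argument (semiample fibration $W_i$ on each stratum, bounding $E_{Z_i}$ by a finite equivalence relation coming from $W_i$) go through, and it also removes any need to invoke Theorem \ref{thm:Cascini-Tanaka} or a Chow's lemma reduction.

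The second half of your argument is closer to the paper's, but the step you flag as ``the main obstacle'' is a real problem rather than bookkeeping: a morphism $\psi_U\colon Z'_U\to W_U$ need not descend along the universal homeomorphism $\chi_U\colon Z'_U\to Z_U$ (in positive characteristic $\chi_U$ can behave like a Frobenius, and the identity does not factor through Frobenius), and passing to reductions does not fix this. The paper avoids descending the morphism altogether: it only needs that the \emph{reduction of} $E_{Z_i}$ is contained in \emph{some} finite set-theoretic equivalence relation, and it takes for that the image of $Z'_i\times_{W_i}Z'_i$ under the universal homeomorphism $Z'_i\times_S Z'_i\to Z_i\times_S Z_i$ — the image of a finite equivalence relation is again one here, and the containment $E_{Z_i}^{\red}\subseteq$ (this image) follows from the coequaliser property of $E_i\rightrightarrows Y_i\to W_i$. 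You should replace the attempted descent of $\psi_U$ by this image construction.
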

\begin{proof}
To this end, we consider a factorisation $\emptyset = S_0 \subseteq S_1 \subseteq \ldots \subseteq S_m = S$ of closed subschemes of $S$ such that $L|_{X_{i}}$ is semiample for all $i\geq 1$ where $U_i := S_i \, \backslash \, S_{i-1}$ and $X_{i} = X \times_S U_i$. Such a factorisation is constructed as follows. Pick a generic point $\eta \in S$. Since $L|_{X_{\eta}}$ is semiample, there exists an open subset $U \subseteq S$ such that $L|_{X \times_S U}$ is semiample over $U$. Then the factorisation is constructed by applying Noetherian induction to $X \times_S (S \,\backslash\, U)$ (so that $S_{m-1} = S \,\backslash\, U$).

Set $Y_{i} = Y \times_S U_i$, $Z_{i} = Z \times_S U_i$, $E_{i} = E \times_S U_i$, and $E_{Z_i} = E_Z \times_S U_i$. Here, $E_{Z_i} \subseteq Z_i \times_S Z_i$ since closed immersions are stable under base change (\cite[Tag 03M4]{stacks-project}). Note that $Y_i \to Z_i$ has geometrically connected fibres. Let $Y_i \to Z'_i \to Z_i$ be the Stein factorisation of $Y_i \to Z_i$, with $Z'_i \to Z_i$ being a universal homeomorphism. Further, let $f_{i} \colon X_{i} \to W_{i}$ be the semiample fibration associated to $L|_{X_{i}}$. We have a factorisation $Y_{i}  \to Z'_{i} \xrightarrow{\nu_i} W_{i}$, where $\nu_i$ is finite. Last, we define $E_{Z'_i}$ to be the image of $E_i$ in $Z'_i \times_S Z'_i$.
\begin{center}
\begin{tikzcd}
E_i \arrow{d} \arrow[shift left = 1]{r} \arrow[shift right = 1]{r} & Y_i \arrow{d} \arrow{r} & X_i \arrow{d} \\
E_{Z'_i} \arrow{d}  \arrow[shift left = 1]{r} \arrow[shift right = 1]{r} & Z'_i \arrow{d} \arrow{r}{\nu_i} & W_i \\
E_{Z_i} \arrow[shift left = 1]{r} \arrow[shift right = 1]{r} & Z_i.  
\end{tikzcd}
\end{center}
Note that $E_{i} \rightrightarrows Y_i \to W_i$ is a coequaliser diagram, and so $E_i \rightrightarrows Z'_i \to W_i$
is also a coequaliser diagram. Since $E_{Z'_i}$ is an image of $E_i$, the pullback $\cO_{E_{Z'_i}} \to \cO_{E_i}$ is injective, and
\[
E_{Z'_i} \rightrightarrows Z'_i \to W_i
\]
is a coequaliser diagram. As a consequence, $E_{Z'_i}$ is contained in the finite set theoretic equivalence relation $Z'_i \times_{W_i} Z'_i$, and so the reduction of  $E_{Z_i}$ is also contained in a finite set theoretic equivalence relation being the image of $Z'_i \times_{W_i} Z'_i$ under the universal homeomorphism $Z'_i \times_{S} Z'_i \to Z_i \times_S Z_i$.
Therefore, the assumptions of Lemma \ref{lem:closures-of-subsets-of-finite-equivalence-relations} are satisfied and the claim is proven.
\end{proof}
\begin{claim}The quotient of $Z$ by $\overline E_Z$ exists as a separated algebraic space of finite type over $S$.
\end{claim}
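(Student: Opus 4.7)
The plan is to invoke Theorem \ref{thm:quotients}, which reduces the construction of $Z/\overline{E}_Z$ over $S$ to exhibiting $Z_{\bQ}/\overline{E}_{Z,\bQ}$ as a separated algebraic space of finite type over $S_{\bQ}$. The hypotheses on $Z$ are already in hand: $Z$ is proper (hence separated and of finite type) over $S$ as the target of the semiample fibration from the proper $S$-scheme $Y$, and the preceding claim supplies that $\overline{E}_Z \rightrightarrows Z$ is a finite set-theoretic equivalence relation.

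For the characteristic-zero existence I would re-run the argument of the preceding claim with $U_i$ replaced by $S_{\bQ}$, exploiting the hypothesis that $L|_{X_{\bQ}}$ is semiample. Let $f_{\bQ}\colon X_{\bQ}\to W_{\bQ}$ be the associated semiample fibration. Since $\Spec\bQ\to\Spec\bZ$ is flat, base change preserves $h_*\cO_Y=\cO_Z$, so $h_{\bQ}\colon Y_{\bQ}\to Z_{\bQ}$ is itself a contraction; in the notation of the preceding claim this means $Z'_{\bQ}=Z_{\bQ}$ and no universal-homeomorphism correction intervenes. Writing $g^*L^m\sim h^*A$ and $L^{mk}\sim f^*M$ for some $k>0$ with $A$ and $M$ relatively ample, rigidity of contractions yields a morphism $\nu_{\bQ}\colon Z_{\bQ}\to W_{\bQ}$ with $\nu_{\bQ}\circ h_{\bQ}=f_{\bQ}\circ g_{\bQ}$, and the relation $\nu_{\bQ}^*M\sim A^{k}$ forces $\nu_{\bQ}$ to be finite (a positive-dimensional fibre of $\nu_{\bQ}$ would carry a curve on which $A$ has degree zero, contradicting ampleness). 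The coequaliser reasoning of the first claim now applies verbatim, giving $E_{Z,\bQ}\subseteq Z_{\bQ}\times_{W_{\bQ}} Z_{\bQ}$ and therefore $\overline{E}_{Z,\bQ}\subseteq Z_{\bQ}\times_{W_{\bQ}} Z_{\bQ}$, so $\overline{E}_{Z,\bQ}$ is a finite set-theoretic equivalence relation on $Z_{\bQ}$ over $W_{\bQ}$.

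With the integral morphism $\nu_{\bQ}$ and the equivalence relation $\overline{E}_{Z,\bQ}$ over $W_{\bQ}$ in hand, Proposition \ref{prop:kollar_quotients_exist} applied over the Noetherian base $S_{\bQ}$ produces the geometric quotient $Z_{\bQ}/\overline{E}_{Z,\bQ}$ as a separated algebraic space of finite type over $S_{\bQ}$; combining with Theorem \ref{thm:quotients} then yields $Z/\overline{E}_Z$ over $S$. The main obstacle in the plan is establishing the existence and finiteness of $\nu_{\bQ}$, but both are routine given the rigidity and ampleness tools already used in the construction of the first claim.
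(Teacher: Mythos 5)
Your proposal is correct and follows essentially the same route as the paper: reduce to characteristic zero via Theorem \ref{thm:quotients}, take the semiample fibration $X_{\bQ}\to W_{\bQ}$, produce the finite morphism $\nu_{\bQ}\colon Z_{\bQ}\to W_{\bQ}$, check via the coequaliser argument that $\overline{E}_{Z,\bQ}$ is an equivalence relation over $W_{\bQ}$, and apply Proposition \ref{prop:kollar_quotients_exist}. The only difference is that you spell out the rigidity and ampleness argument for the existence and finiteness of $\nu_{\bQ}$, which the paper asserts without detail.
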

\begin{proof} By Theorem \ref{thm:quotients}, it is enough to show that such a quotient $Z_{\bQ}/{\overline E_{Z,\bQ}}$ exists in characteristic zero. To this end, take $X_{\bQ} \to W_{\bQ}$ to be the semiample fibration associated to $L|_{X_{\bQ}}$. We have a factorisation $Y_{\bQ} \to Z_{\bQ} \xrightarrow{\nu_{\bQ}} W_{\bQ}$ where $\nu_{\bQ}$ is finite and 
\[
E_{Z,\bQ} \rightrightarrows Z_{\bQ} \xrightarrow{\nu_{\bQ}} W_{\bQ}
\]
is coequaliser diagram. Indeed, $E_{\bQ} \rightrightarrows Z_{\bQ} \to W_{\bQ}$ is a coequaliser diagram and the statement follows as $E_{Z,\bQ}$ is an image of $E_{\bQ}$, and so the pullback $\cO_{E_{Z,\bQ}} \to \cO_{E_{\bQ}}$ is injective. Hence, 
\[
\overline{E}_{Z,\bQ} \rightrightarrows Z_{\bQ} \xrightarrow{\nu_{\bQ}} W_{\bQ}
\]
is a coequaliser diagram, and the quotient exists by Proposition \ref{prop:kollar_quotients_exist}.
\end{proof}

Let $W = Z/ {\overline E_Z}$ be the quotient and consider the following diagram
\begin{center}
\begin{tikzcd}
Y\arrow{dr} \arrow{dd}[swap]{h} \arrow[bend left = 5]{drr}{g}  & & \\
& X^* \arrow{d}{f} \arrow{r}{r} & X  \\
Z \arrow{r}{\nu} & W  &  
\end{tikzcd}
\end{center} 
where $X^*$ is the image of $Y$ in $W \times_S X$. By construction, $r$ is a finite universal homeomorphism. Since $h \colon Y \to Z$ is the Stein factorisation of $Y \to W$, we can replace $W$ by the finite part of the Stein factorisation of $X^* \to W$ so that $f_* \cO_{X^*} = \cO_{W}$ and $Z$ still admits a morphism to $W$ which we also call $\nu$.  

By Lemma \ref{lem:semiample-under-pullback}, we have that $r^*L|_{X^*_{\bQ}}$ is semiample (and hence descends to $W_{\bQ}$ as it is relatively numerically trivial) and that $r^*L$ is semiample on fibres over positive characteristic points of $S$, and so also on fibres over each positive characteristic point of $W$. Since $r^*L$ is relatively numerically trivial over $W$, it is torsion on each positive characteristic fibre over $W$. Therefore, $r^*L^m \sim f^*M$ for some $m \in \bN$ and a line bundle $M$ on $W$ by Proposition \ref{prop:relatively-trivial-case-algebraic-spaces}. Note that the pullback of $M$ to $Y$ is isomorphic to $g^*L^m$, which means that $\nu^*M$ is isomorphic to the ample line bundle induced by $g^*L^m$ up to replacing $m$ and $M$ by some multiples. If a pullback of a line bundle by a finite surjective map is ample, then so is the line bundle itself (this follows, for example, by the Nakai-Moishezon criterion, cf.\ \cite[Lemma 2.15]{BMPSTWW20}). Therefore, $M$ is ample, and so is $r^*L^m$. Finally $L$ is semiample by Corollary \ref{cor:connected-fibres-semiample} as $r$ is a universal homeomorphism. 

\end{proof}


\section*{Acknowledgements}
We are grateful to Bhargav Bhatt for numerous helpful conversations. We also thank Piotr Achinger, Paolo Cascini, Elden Elmanto, Christopher Hacon, Akhil Mathew, and Hiromu Tanaka for comments and suggestions.


\bibliography{final}

\end{document}